\newcommand{\K}{\mathcal{K}}
\newcommand{\M}{\mathfrak{M}}
\newcommand{\rank}{\mathrm{Rank}}
\newcommand{\chii}{\raisebox{2pt}{$\chi$}}
\newcommand{\C}{\mathbb{C}}
\newcommand{\tens}{\otimes}
\newcommand{\dsum}{\oplus}
\newcommand{\bigdsum}{\bigoplus}
\newcommand{\fixlist}{\mbox{} \@afterheading}
\newcommand{\proofnewline}{\mbox{} \\ \indent}
\newcommand{\iso}{\cong}
\newcommand{\dunion}{\amalg}
\newcommand{\labelledthing}[2]{\hspace{4pt}\buildrel {#2} \over #1 \hspace{3pt}} 
\newcommand{\labelledrightarrow}{\labelledthing{\longrightarrow}}
\newcounter{theorem}
\newtheorem{thm}[theorem]{Theorem}
\newtheorem{lemma}[theorem]{Lemma}
\newtheorem{prop}[theorem]{Proposition}
\newtheorem{cor}[theorem]{Corollary}
\newtheorem{defn}[theorem]{Definition}
\newcounter{proofpartcounter}
\theoremstyle{remark}
\newtheorem*{remark*}{Remark}
\newtheorem{remark}[theorem]{Remark}
\newtheorem{example}[theorem]{Example}
\numberwithin{equation}{section}
\numberwithin{theorem}{section}
\newcommand{\alabel}{\label}
\newcommand{\RP}{\mathrm{RP}_{p.w.}}
\newcommand{\PIPD}{\mathrm{PIPD}_{p.w.}}
\newcommand{\ltwo}{\ell_2}
\newcommand{\BH}{B(\ell_2(\mathbb N))}
\newcommand{\interior}[1]{{#1}^\circ}
\newcommand{\closure}{\overline}
\newcommand{\waysubset}{\subseteq\!\subseteq}
\newcommand{\hered}[1]{\mathrm{Her}\left(#1\right)}
\newcommand{\Cu}{\mathcal{C}u}
\newcommand{\CEIleq}{\preceq_{Cu}}
\newcommand{\CEIeq}{\sim_{Cu}}
\newcommand{\mvnleq}{\preceq}
\newcommand{\mvneq}{\cong}
\newcommand{\limcoh}[1]{\overleftarrow{\check{H}}^2(#1)}
\newcommand{\limchern}{\overleftarrow{c}_1}
\newcommand{\specialCuntz}{\widehat{\Cu}}
\newcommand{\specialV}{V_c}
\newcommand{\claim}{\noindent \textbf{Claim.}\ \ }
\renewcommand{\emptyset}{\varnothing}
\newcommand{\dimC}{\dim_\mathbb{C}}
\renewcommand{\M}{M}
\renewcommand{\setminus}{\backslash}
\newcommand{\proofparts}{\setcounter{proofpartcounter}{0}}
\newcommand{\ppartgeneral}[1]{\refstepcounter{proofpartcounter}\emph{#1 \arabic{proofpartcounter}.}\ \ }
\newcommand{\pstep}{\ppartgeneral{Step}}
\newcommand{\pcase}{\ppartgeneral{Case}}
\renewcommand{\rank}{\mathrm{rank}\ }
\title{Hilbert C$^*$-modules over a commutative C$^*$-algebra}
\author{Leonel Robert \and Aaron Tikuisis}
\thanks{The second author was supported by an NSERC CGS-D scholarship}
\keywords{ Hilbert C$^*$-modules; Cuntz semigroup}
\subjclass[2000]{46L35, 46L05.}
\begin{document}

\maketitle

\begin{abstract}
This paper studies the problems of embedding and isomorphism for countably generated Hilbert C$^*$-modules over commutative C$^*$-algebras.
When the fibre dimensions differ sufficiently, relative to the dimension of the spectrum, we show that there is an embedding between the modules.
This result continues to hold over recursive subhomogeneous C$^*$-algebras.
For certain modules, including all modules over $C_0(X)$ when $\dim X \leq 3$, isomorphism and embedding are determined by the restrictions to the sets where the fibre dimensions are constant.
These considerations yield results for the Cuntz semigroup, including a computation of the Cuntz semigroup for $C_0(X)$ when $\dim X \leq 3$, in terms of cohomological data about $X$.
\end{abstract}

\section{Introduction}
Hilbert C$^*$-modules are generalizations of Hilbert spaces where the coefficient space
is allowed to be a C$^*$-algebra. Hilbert C$^*$-modules appear naturally in many areas of C$^*$-algebra theory, such as
KK-theory, Morita equivalence of C$^*$-algebras, and completely 
positive operators. In \cite{CowardElliottIvanescu}, Coward, Elliott, and Ivanescu give a description of  the Cuntz semigroup of a C$^*$-algebra in terms of the Hilbert C$^*$-modules over the algebra. This ordered semigroup has been shown to be a key ingredient in the Elliott program for the classification of C$^*$-algebras
(see \cite{CiupercaElliott},\cite{ElliottToms},\cite{Toms:annals}). These applications of the Cuntz semigroup motivate the present work.

The focus of this paper is the class of countably generated Hilbert C$^*$-modules over a commutative C$^*$-algebra.
When the C$^*$-algebra is commutative, Hilbert C$^*$-modules may be alternatively described as fields of Hilbert spaces
over the spectrum of the algebra \cite{Alonso}.  
Here, we do not study fields of Hilbert spaces directly, since the Hilbert C$^*$-module setting relates more naturally to the applications that we have in mind to Hilbert C$^*$-modules over sub-homogeneous C$^*$-algebras and their inductive limits.
Some generality is lost by doing this, since the base space is then
restricted to be locally compact and Hausdorff.

The results here address the following  questions: when are two given
Hilbert C$^*$-modules isomorphic, and when does one embed in the other?
In the context of fields of Hilbert spaces, these questions were considered by Dixmier
and Douady in \cite{DixmierDouady}, and more systematically by Dupr\'e in \cite{Dupre:ClassI}, \cite{Dupre:infinite}, and \cite{Dupre:ClassII}. 
Our approach is based on a representation of a Hilbert C$^*$-module as a supremum of vector bundles
supported on a family of open sets that cover the space. Thus, our results parallel---and rely on---the
theory of locally trivial vector bundles (when the dimension of the fibres is constant, the field
of Hilbert spaces corresponding to a Hilbert module is in fact a locally trivial vector bundle).

A fundamental result in the theory of vector bundles over a space $X$ of finite dimension states that when the fibre dimension of one bundle is sufficiently smaller than that of another one, the one bundle embeds into the other.
In Theorem \ref{DimFinite-Embedding}, we generalize this to countably generated Hilbert $C_0(X)$-modules:
if $M,N$ are countably generated Hilbert $C_0(X)$-modules, and
\[ \dim N|_x \geq \dim M|_x + \frac{\dim X - 1}{2} \quad \text{for all $x \in X$}, \]
then $M$ embeds into $N$.
Here, $M|_x$ denotes the fibre of $M$ at $x\in X$.
This result strengthens \cite{Toms:comparison}, where it is shown that $M$ is Cuntz below $N$ (see also \cite[Proposition 7]{Dupre:ClassII} for the case that $M$ has constant dimension and $N$ has infinite dimensional fibres on an open set).
In Corollary \ref{DimFinite-Embedding-RSH} we show that this result continues to hold for Hilbert C$^*$-modules over recursive subhomogeneous  C$^*$-algebras.

In order that two given Hilbert $C_0(X)$-modules $M$ and $N$ be isomorphic, the dimensions of $M|_x$ and $N|_x$ must agree for all $x\in X$.
Furthermore, there must be an isomorphism between the vector bundles arising by restricting $M$ and $N$ to the sets of constant dimension. In Theorem \ref{Dim3-Embedding-Iso}, we find certain situations in which this is the only obstruction to
the modules being isomorphic. This is the case, for example, when $X$ has dimension at most 3.
In fact, for spaces of dimension at most 3, the ordered semigroup of isomorphism classes of Hilbert C$^*$-modules may be
described in terms of cohomological data extracted from the module (this result is obtained
in \cite[Corollary 1]{Dupre:ClassII} for modules of finite order when $X$ has dimension at most 2).
In Example \ref{clutching-sphere}, we show that this classification does not extend to spaces of dimension larger than 3.
However, for spaces of larger dimension,
we show that,  if we have an isomorphism between the vector bundles arising by
restricting $M$ and $N$ to the sets of constant dimension, then
\[ M^{\dsum \lceil\frac{\dim X}{2}\rceil} \iso N^{\dsum \lceil\frac{\dim X}{2}\rceil} . \]

In Section \ref{CEI} we consider the Cuntz comparison of Hilbert C$^*$-modules and the Cuntz
semigroup of $C_0(X)$. Our results on embedding and isomorphism of Hilbert C$^*$-module
readily yield corollaries about the Cuntz comparison and equivalence of Hilbert C$^*$-modules.
We give a description of the Cuntz semigroup of $C_0(X)$, for $\dim X\leq 3$.
In Example \ref{telescope}, we resolve an outstanding question from \cite{CowardElliottIvanescu} of whether Cuntz comparison is the same as embedding: two Hilbert $C_0(X)$-modules are presented, with $X$ of dimension 2, which are Cuntz equivalent, yet such that neither one embeds in the other.
A peculiarity in the topological properties of $X$ permits this example.

In the last section we completely determine the group $K_0^*(C_0(X))$ originally considered by Cuntz: the dimension function suffices to 
determine the class of a Hilbert C$^*$-module in $K_0^*(C_0(X))$.
This demonstrates that the Cuntz semigroup is significantly more interesting than the group $K_0^*$.
We also prove that if the restriction of a countably generated Hilbert C$^*$-module $M$ to an open set $U$
is the trivial module with infinite dimensional fibres, i.e., $\ltwo(U)$, then 
$M\cong M\oplus \ltwo(U)$. As a corollary, we show that if $\dim X$ is finite, and the set 
of points where $M$ has infinite dimensional fibres is open, then the isomorphism class of $M$
is determined by its restriction to the set where it has finite dimensional fibres.
Conjecturally, this holds even if the set where the fibres are infinite dimensional is not open (under the hypothesis that $\dim X$ is finite).
This conjecture was put forth by Dupr\'{e} in \cite[Conjecture 1]{Dupre:infinite}, and our result is a partial confirmation.

The organization of the paper is as follows.
Section \ref{DimFinite} contains preliminaries, particularly the description of Hilbert $C_0(X)$-modules in terms of rank-ordered families of projections.
In Section \ref{Sec-DimFinite-Embedding}, we prove the result that when the pointwise dimensions of two Hilbert modules differ sufficiently then one embeds into the other.
The main result of Section \ref{Dim3} is that for $\dim X \leq 3$, isomorphism and embedding of Hilbert $C_0(X)$-modules depends only on the restrictions to their sets of constant dimension.
In Section \ref{CEI}, the results of Section \ref{Dim3} are applied to the Cuntz semigroup, ultimately producing a description of $\Cu(C_0(X))$ for $\dim X \leq 3$.
Section \ref{FurtherRemarks} contains distinct parts.
In \ref{Clutching}, an example is given of non-isomorphic Hilbert $C(S^4)$-modules whose restrictions to their sets of constant rank are isomorphic.
A computation of $K_0^*(C_0(X))$ for $\dim X < \infty$ is given in \ref{K0*}.
Finally, \ref{Absorption} contains the result that if the restriction of a Hilbert $C_0(X)$-module $M$ to an open set $U$ is isomorphic to $\ltwo(U)$ then $M$ is isomorphic to $M \dsum \ltwo(U)$.

\section{Preliminary definitions and results}\alabel{DimFinite}

\subsection{\texorpdfstring{Hilbert $C_0(X)$-modules}{Hilbert Co(X)-modules}}
A right Hilbert C$^*$-module  over a C$^*$-algebra $A$
is a right $A$-module $M$, endowed with
an $A$-valued inner product $\langle\cdot,\cdot\rangle$, and such that $M$ is complete with respect to the norm
$m\mapsto \|\langle m,m\rangle\|^{1/2}$. The reader is referred to \cite{Lance} for a more detailed 
definition of Hilbert C$^*$-module and for the general theory
of these objects. Here we review a few facts about Hilbert C$^*$-modules that will be 
used throughout the paper. We will often refer to Hilbert C$^*$-modules simply as Hilbert modules.
In the discussion that follows we assume that the C$^*$-algebra acts on the right of the Hilbert modules 
(this provision will be not necessary once we specialize to $A=C_0(X)$).

A Hilbert module is said to be  countably generated if it contains a countable set $\{m_i\}_{i=0}^\infty$
such that the sums  $\sum m_i\lambda_i$, with $\lambda_i\in A$, form a dense subset
of the module. For a Hilbert module $M$ we denote by $K(M)$ the C$^*$-algebra of compact operators
on $M$. By $\ltwo(A)$ we denote the Hilbert module over $A$ of sequences $(x_i)_{i=0}^\infty$, $x_i\in A$,
such that $\sum x_i^*x_i$ is norm convergent in $A$. It is known that every countably generated  Hilbert
module is isomorphic to one of the form $\overline{a\ltwo(A)}$, with $a\in K(\ltwo(A))^+$. 

For $a\in K(\ltwo(A))^+$ let us denote by $M_a$ the Hilbert module $\overline{a\ltwo(A)}$.
Let $\hered{a}$ denote the hereditary algebra generated by $a$ in $K(\ltwo(A))$, i.e., the algebra
$\overline{aK(\ltwo(A))a}$. 

Let $a,b\in K(\ltwo(A))^+$. If $a=s^*s$ and $\hered{ss^*}=\hered{b}$ for some $s\in K(\ltwo(A))$, 
then the map $\phi_s\colon M_a\to M_b$ given by
\[ \phi_s(|s|m):=sm \]
for $m\in \ltwo(A)$ (and extended continuously to all of $M_a = M_{|s|}$), is an isomorphism of Hilbert modules. Furthermore, if $\phi\colon M_a\to M_b$ is an isomorphism then $\phi=\phi_s$ for some
$s$ as above.

Let us now focus on the case of commutative C$^*$-algebras.
Henceforth, unless otherwise stated, $X$ will denote a locally compact Hausdorff space.
We will often speak of the dimension of $X$, by which we mean the covering dimension.
When specializing to the algebra $C_0(X)$, we have $\ltwo(C_0(X))\cong C_0(X,\ltwo(\mathbb{N}))$ and $K(\ltwo(C_0(X)))\cong C_0(X,K(\ltwo(\mathbb{N})))$, where $C_0(X,K(\ltwo(\mathbb{N})))$ acts pointwise on $C_0(X,\ltwo(\mathbb{N}))$.
In the sequel we will make the identifications given by these isomorphisms. We will denote the C$^*$-algebra $K(\ltwo(\mathbb{N}))$ simply as $\mathcal K$.

For $a\in C_0(X,\mathcal K)^+$, let $p\colon X\to \BH$ be the projection-valued map defined by
$p(x):=\chii_{(0,\infty)}(a(x))$ for all $x\in X$, i.e., $p(x)$ is the range projection of $a(x)$. We refer to $p$
as the pointwise range projection of $a$. For $s\in C_0(X,\mathcal K)$ let $v\colon X\to \BH$ be such that, 
for each $x\in X$, $s(x)=v(x)\big|s(x)\big|$ is the polar decomposition of $s(x)$. We refer to $v$
as the partial isometry arising from the pointwise polar decomposition of $s$. Suppose
that $a=s^*s$ and set $ss^*=b$. The module $M_a$ and the map $\phi_s\colon M_a\to M_b$ defined above can be neatly expressed in terms of $p$ and $v$ as follows.

\begin{lemma}
Let $a$, $p$, $s$, and $v$ be as in the previous paragraph. Then
\begin{align}
M_a =\{m\in \ltwo(C_0(X)) \mid p(x)m(x)=m(x) \hbox{ for all } x\in X\},& \quad \text{and} \alabel{Map} \\
(\phi_sm)(x) =v(x)m(x), \quad \hbox{for all $m\in M_a$ and $x\in X$}.&\alabel{phisv}
\end{align}
\end{lemma}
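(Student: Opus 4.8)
The plan is to establish \eqref{Map} first and then deduce \eqref{phisv} from it. For \eqref{Map}, write $S$ for the set on the right-hand side. The inclusion $M_a \subseteq S$ is the easy half: if $m = an$ with $n \in \ltwo(C_0(X))$ then $m(x) = a(x)n(x)$ lies in the range of $a(x)$, so $p(x)m(x) = m(x)$ because $p(x)$ is the range projection of $a(x)$; and $S$ is closed, since convergence in the $\ltwo(C_0(X))$-norm forces uniform convergence of the fibres $m(x)$ in $\ltwo(\mathbb{N})$, which preserves the pointwise identity $p(x)m(x) = m(x)$. As $M_a = \overline{a\ltwo(C_0(X))}$, this gives $M_a \subseteq S$.

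For the reverse inclusion, fix $m \in S$ and let $g_n(t) := \min(nt,1)$. First I would check that $g_n(a)m \in M_a$: writing $r_n(t) := \min(n, t^{-1})$ so that $g_n(t) = t\,r_n(t)$, the fibrewise product $r_n(a)m$ defines an element of $\ltwo(C_0(X))$ — it is continuous and vanishes at infinity because $m$ does and $\|r_n\|_\infty \le n$ — even though $r_n(a) \notin C_0(X,\mathcal{K})$; hence $g_n(a)m = a\,(r_n(a)m) \in a\ltwo(C_0(X)) \subseteq M_a$. It then suffices to prove that $g_n(a)m \to m$ in norm, i.e. uniformly over the fibres, since $M_a$ is closed.

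The uniform convergence is the heart of the argument and the step I expect to be the main obstacle, precisely because the pointwise range projection $p$ is typically discontinuous. Fibrewise, the identity $p(x)m(x) = m(x)$ says $m(x) \in \overline{\mathrm{ran}\,a(x)}$, so $g_n(a(x))m(x) \to m(x)$ for each fixed $x$; the difficulty is making this uniform. I would argue by contradiction: if uniformity failed there would be $\delta > 0$, indices $n_k \to \infty$, and points $x_k$ with $\|(1 - g_{n_k}(a(x_k)))m(x_k)\| \ge \delta$. Since $\|m(x)\|$ vanishes at infinity and $\|1 - g_{n_k}(a(x))\| \le 1$, the $x_k$ must lie in a fixed compact set, and so (after passing to a subsequence) converge to some $x_0$. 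Choosing $N$ with $\|(1 - g_N(a(x_0)))m(x_0)\| < \delta/2$ and using continuity of $x \mapsto (1 - g_N(a(x)))m(x)$, one gets $\|(1 - g_N(a(x_k)))m(x_k)\| < \delta$ for large $k$. The key observation is monotonicity: $g_n \le g_{n'}$ pointwise for $n \le n'$, so $0 \le 1 - g_{n'}(a(x)) \le 1 - g_n(a(x))$ and therefore $(1 - g_{n'}(a(x)))^2 \le (1 - g_n(a(x)))^2$ as functions of the single operator $a(x)$; pairing this with $m(x_k)$ gives $\|(1 - g_{n_k}(a(x_k)))m(x_k)\| \le \|(1 - g_N(a(x_k)))m(x_k)\| < \delta$ once $n_k \ge N$, contradicting the choice of the $x_k$. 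This proves $M_a = S$.

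Finally, \eqref{phisv} follows by density and continuity. Since $|s| = a^{1/2}$ and $M_a = M_{|s|} = \overline{|s|\ltwo(C_0(X))}$, the submodule $|s|\ltwo(C_0(X))$ is dense in $M_a$, and on it the defining formula gives, fibrewise, $(\phi_s(|s|m))(x) = (sm)(x) = v(x)|s(x)|m(x) = v(x)(|s|m)(x)$, using the pointwise polar decomposition $s(x) = v(x)|s(x)|$. For general $\xi \in M_a$, approximate $\xi = \lim_k |s|m_k$; then $\phi_s(\xi) = \lim_k s m_k$ in $M_b$, so $(\phi_s(\xi))(x) = \lim_k (sm_k)(x) = \lim_k v(x)(|s|m_k)(x) = v(x)\xi(x)$, the last equality because $\|v(x)\| \le 1$ and $(|s|m_k)(x) \to \xi(x)$ by the uniform, hence pointwise, convergence supplied by \eqref{Map}. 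This establishes \eqref{phisv}.
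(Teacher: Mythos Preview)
Your approach is essentially the same as the paper's: approximate $m$ by $h_n(a)m$ for an increasing sequence $h_n$ in $C_0((0,\|a\|])^+$ converging pointwise to $1$, and use a Dini-type argument to upgrade pointwise to uniform convergence. The paper takes $h_n(t)=t^{1/n}$ and simply invokes Dini's theorem on compact subsets of $X$ (combined with $m$ vanishing at infinity), whereas you take $h_n(t)=\min(nt,1)$ and reprove the Dini step by hand. Your argument for $g_n(a)m\in M_a$ via $g_n(t)=t\,r_n(t)$ is a nice touch that makes membership in $a\,\ltwo(C_0(X))$ explicit; in the paper this is implicit (one uses $a^{1/n}\in\hered{a}$). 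The treatment of \eqref{phisv} is identical in spirit.

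There is, however, a genuine gap in your Dini substitute. You pass from ``the $x_k$ lie in a fixed compact set'' to ``after passing to a subsequence, $x_k\to x_0$''. The space $X$ is only assumed locally compact Hausdorff, and compact subsets of such spaces need not be sequentially compact (e.g.\ $\beta\mathbb{N}$, or $[0,1]^{[0,1]}$). So the subsequence may not exist. The monotonicity you isolate is exactly the hypothesis of Dini's theorem, and Dini holds on arbitrary compact Hausdorff spaces via an open-cover argument rather than sequences; citing it (as the paper does) closes the gap cleanly. Alternatively, replace the sequence $(x_k)$ by a net and use compactness directly.
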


\begin{proof}
We clearly have the inclusion of $M_a$ in the right side of \eqref{Map}.
Let $m\in\ltwo(C_0(X))$ be such that $p(x)m(x)=m(x)$ for all $x\in X$. 
Since  $a^{1/n}(x)\to p(x)$ strongly for every $x$, 
we have  $\left\langle a^{1/n}(x)m(x),m(x)\right\rangle \nearrow \langle m(x),m(x)\rangle$ for every $x\in X$.
By Dini's Theorem this convergence is uniform on compact subsets of $X$. We thus have that $(1-a^{1/n})^{1/2}m\to 0$
in $\ltwo(C(X))$, and so $(1-a^{1/n})m\to 0$ in $\ltwo(C(X))$. Thus, $m\in M_a$. 

For \eqref{phisv}, we have $(\phi_s|s|m)(x)=s(x)m(x)=(v|s|)(x)m(x)$ for all $x\in X$. The vectors $|s|m$, with $m\in \ltwo(C_0(X))$, form a dense subset of $M_a$. Hence, $(\phi_sm)(x)=v(x)m(x)$ for all $m\in M_a$ and $x\in X$. 
\end{proof}

Since $M_a$ and $\phi_s$ depend only on $p$ and $v$ we will denote them by $M_p$
and $\phi_v$ when the relation between $a$ and $p$, and the relation
between $s$ and $v$, are understood. 

Let us denote by $\RP(X)$ the set of pointwise range projections of
elements in $C_0(X,\mathcal K)^+$. Let us denote by $\PIPD(X)$ the set of partial isometries
arising from the polar decomposition of an element in $C_0(X,\mathcal K)$.
It follows from the lemma and the remarks above that if $p,q\in \RP(X)$ then 
\begin{align*}
M_p\subseteq M_q &\Leftrightarrow p\leq q,\\
M_p\cong M_q     &\Leftrightarrow  \hbox{$p=v^*v$, $vv^*=q$ for some $v\in \PIPD(X)$.}
\end{align*}
In the latter case we write $p\cong q$.
If $M_p$ embeds into $M_q$, we write $p \preceq q$.

One can intuitively imagine what is meant by the restriction of a Hilbert $C_0(X)$-module to a subset of $X$, but let us give a formal definition.
If $F \subseteq X$ is a closed subset then let $M|_F := M/MC_0(X \setminus F)$, which is a $C_0(F)$-module ($C_0(F) \iso C_0(X)/C_0(X \setminus F)$ via the restriction map).
If $U \subseteq X$ is an open subset, we may let $M|_U := MC_0(U)$.
Combining these, if $Y \subseteq X$ is the intersection of a closed subset $F$ and an open subset $U$, we can see that $(M|_F)|_{U \cap F} = (M|_U)|_{U \cap F}$, and we define $M|_Y$ to be this.
For $x \in X$, we will, by abusing notation, allow $M|_x := M|_{\{x\}}$; since $C(\{x\}) \iso \mathbb{C}$, this is simply a Hilbert space, so that $\dim M|_x$ makes sense. 
Note that if $M$ is countably generated then so are $M|_F, M|_U$ for $F$ closed and for $U$ open and $\sigma$-compact.
One can easily check that for $p \in \RP(X)$, and for $Y$ the intersection of a closed and an open subset of $X$, we have
\[ M_p|_Y = M_{p|_Y}. \]

We denote by $\dim M$ the map from $X$ to $\mathbb N\cup \{\infty\}$
given by 
\[ \dim M(x):=\dim M|_x. \]
If $M=M_p$, and $a\in C_0(X)\otimes \K$ has pointwise
range projection $p$, then $\dim M(x)=\rank p(x)=\rank a(x)$ for all $x\in X$.

For a function $a\colon X \to \BH$, define the sets $R_{=i}(a), R_{\geq i}(a), R_{\leq i}(a)$ in terms of the function $\rank a$ as follows:
\begin{align*}
R_{=i}(a) &:= \{x \in X\mid \rank a(x) = i\}, \\
R_{\geq i}(a) &:= \{x \in X\mid \rank a(x) \geq i\}, \\
R_{\leq i}(a) &:= \{x \in X\mid \rank a(x) \leq i\}.
\end{align*}
Likewise, for a Hilbert $C_0(X)$-module $M$, we define $R_{=i}(M),R_{\geq i}(M), R_{\leq i}(M)$ in terms of $\dim M$ (so that, for example, $R_{=i}(M_p) = R_{=i}(p)$ for $p \in \RP(X)$).

\subsection{Rank-ordered families of projections}
Presently, we introduce forms of data that describe pointwise range projections of positive elements and partial isometries from pointwise polar decompositions. These data thus serve to describe countably generated
Hilbert modules and the embedding maps between them.

Throughout this paper, the phrase ``a continuous projection on the space $X$'' will mean a projection in the C$^*$-algebra $C_b(X,\K)$. Similarly, the phrase ``a continuous partial isometry on $X$'' refers to a partial isometry in $C_b(X,\K)$.

\begin{defn} \alabel{ROF-Defn} (cf.\ \cite{LinPhillips:mindiffeos})
A rank-ordered family of projections is a family of pairs $(p_i,A_i)_{i=0}^\infty$ such that
\begin{enumerate}
\item $X = \bigcup_{i} A_i$.
\item For each $i$, $\bigcup_{j \geq i} A_j = \bigcup_{j \geq i} \interior{A_j}$.
\item For each $i \geq 1$, $\bigcup_{j \geq i} A_j$ is a $\sigma$-compact subset of $X$. \alabel{ROF-SigmaCompact}
\item For each $i$, $p_i$ is a continuous projection on $A_i$ with constant rank $i$.
\item For $i \leq j$, $p_i \leq p_j$ on $A_i \cap A_j$. \alabel{ROF-Compatibility}
\end{enumerate}
A rank-ordered family of partial isometries is a family of pairs $(v_i, A_i)_{i=0}^\infty$ such that
the sets $(A_i)_{i=0}^\infty$ satisfy the conditions (1)-(3) above, and also 
\begin{enumerate}
\setcounter{enumi}{3}
\item For each $i$, $v_i$ is a continuous partial isometry with constant rank $i$.
\item For $i \leq j$, $v_j^*v_i=v_i^*v_i$ on $A_i \cap A_j$. \alabel{ROF-PIso-Compatibility}
\end{enumerate}
\end{defn}

In \cite{LinPhillips:mindiffeos}, the name ``rank-ordered family of projections'' is introduced to describe a similar object to what appears above; however, we caution that the objects are not exactly the same.
What appears in \cite{LinPhillips:mindiffeos} is a finite family of pairs $(p_i, A_i)_{i=0}^n$ for which the sets $A_i$ are required to be open. By using an infinite family, we allow the rank-ordered family of projections to describe range projections with possibly unbounded or even infinite rank. Also, we will often make use of rank-ordered families where the sets $A_i$ are not open, but rather, are relatively closed in $\bigcup_{j \geq i} A_j$.

\begin{prop}\alabel{ROF-Characterization} (cf. \cite[Lemma 3.1]{Phillips:arsh})
Let $X$ be a locally compact Hausdorff space.

(i)
Given a rank-ordered family of projections $(p_i,A_i)_{i=0}^\infty$, a projection in $\RP(X)$  (denoted $\bigvee p_i$) is defined by the formula
\[ \big( \bigvee p_i \big)(x) := \bigvee_{i\mid x \in A_i} p_i(x). \]
(If $x$ is only in finitely many sets $A_i$ then $\left(\bigvee p_i\right)(x) = p_i(x)$ for the greatest $i$ for which $x \in A_i$.)

Conversely, if $p\in \RP(X)$ then there exists a rank-ordered family of projections $(p_i,A_i)_{i=0}^\infty$ for which $p = \bigvee p_i$.

(ii)
Given a rank-ordered family of partial isometries $(v_i,A_i)_{i=0}^\infty$, a partial isometry in
$\PIPD(X)$ (denoted $\bigvee v_i$) is defined by 
\[
\big( \bigvee v_i \big)(x)=\lim_{\{i\mid x\in A_i\}} v_i(x),
\]
where the limit is taken in the strong operator topology.  (If $x$ is only in finitely many sets $A_i$ then $\left(\bigvee v_i\right)(x) = v_i(x)$ 
for the greatest $i$ for which $x \in A_i$.)

Conversely, if $v\in \PIPD(X)$, and $(p_i,A_i)_{i=0}^\infty$ is a rank-ordered family of projections such that $v^*v=\bigvee p_i$, then there exists a rank-ordered family of partial isometries $(v_i,A_i)_{i=0}^\infty$ for which $v = \bigvee v_i$
and $p_i=v_i^*v_i$ for all $i$.
\end{prop}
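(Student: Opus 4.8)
The plan is to establish the four assertions in turn, reducing the two statements about partial isometries to the corresponding statements about their source projections together with a polar-decomposition argument.

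\emph{Forward direction of (i).} Conditions (4) and (5) make $\{p_i(x) : x \in A_i\}$ a chain of projections at each point $x$, so the pointwise supremum is a genuine strong limit and $\bigvee p_i$ is well defined. To exhibit it as an element of $\RP(X)$ I would build a single positive element. Put $V_i := \interior{A_i}$. Condition (2) with $i=0$ says exactly that the $V_i$ cover $X$ and that $p_i$ is an honest continuous rank-$i$ projection on the open set $V_i$; condition (2) for general $i$ guarantees, moreover, that for every point $x$ the indices $j$ with $x \in V_j$ are cofinal in the chain $\{p_i(x) : x\in A_i\}$. Using the $\sigma$-compactness of $\bigcup_{j\geq i}A_j$ from condition (3), I would choose $f_i \in C_0(X)$ with $0\leq f_i\leq 1$ and $\{f_i>0\}=V_i$, so that $f_i p_i$ extends continuously by $0$ to $X$, and set $a := \sum_i 2^{-i} f_i p_i \in C_0(X,\K)^+$. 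Since the range projection of a sum of positive operators is the join of the range projections, and since the positive indices are cofinal, the pointwise range projection of $a$ is exactly $\bigvee p_i$.

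\emph{Converse of (i).} Given $p\in\RP(X)$, I would write $p$ as the range projection of $a\in C_0(X,\K)^+$ and take the eigenvalue functions $\lambda_1\geq\lambda_2\geq\cdots$ of $a$, which are continuous and lie in $C_0(X)$. The obstacle is the absence of a spectral gap between the $i$-th and $(i+1)$-th eigenvalues, which is what a continuous rank-$i$ spectral projection would require. I would sidestep this by taking the set where a gap exists: set $A_0:=X$, $p_0:=0$ and, for $i\geq 1$, $A_i:=\{x:\lambda_i(x)>\lambda_{i+1}(x)\}$ (open) with $p_i(x):=\chi_{(\theta_i(x),\infty)}(a(x))$ for $\theta_i:=(\lambda_i+\lambda_{i+1})/2$. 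On $A_i$ there is a genuine gap at $\theta_i$, so continuous functional calculus makes $p_i$ a continuous rank-$i$ projection; the inequality $\theta_j\leq\theta_i$ for $i\leq j$ yields the compatibility condition (5); one checks $\bigcup_{j\geq i}A_j=R_{\geq i}(a)=\{\lambda_i>0\}$, which is open and $\sigma$-compact, giving (2) and (3); and since the positive eigenvalues of a compact operator tend to $0$, every point lies in $A_r$ with $p_r=p$ (if its rank is finite $=r$) or in infinitely many $A_i$ (if its rank is infinite), whence $\bigvee p_i = p$.

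\emph{Forward direction of (ii).} I would first verify that the source projections $p_i:=v_i^*v_i$ form a rank-ordered family: the relation $v_j^*v_i=v_i^*v_i$ on $A_i\cap A_j$ yields, after a short computation, both $p_i\leq p_j$ and $v_i=v_j p_i$ (so the $v_i$ are mutually compatible restrictions, and the strong limit $w:=\bigvee v_i$ exists and satisfies $w p_i = v_i$). By part (i), $p:=\bigvee p_i\in\RP(X)$ is realised by $b=\sum_i 2^{-i}f_i p_i$ as in the first paragraph. Setting $s:=\sum_i 2^{-i}f_i v_i\in C_0(X,\K)$ and using $v_i=w p_i$, one finds $s=wb$ and hence $s^*s=b p b=b^2$, so $|s|=b$ and the partial isometry from the polar decomposition of $s$ is precisely $w=\bigvee v_i$; thus $\bigvee v_i\in\PIPD(X)$.

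\emph{Converse of (ii), and the main difficulty.} Given $v\in\PIPD(X)$ with $v^*v=\bigvee p_i$, I would simply set $v_i:=v p_i$ on $A_i$. The algebra is immediate: $v_i^*v_i=p_i v^*v\, p_i=p_i$ has rank $i$, the partial-isometry compatibility condition follows from $p_i\leq p_j$, and $\bigvee v_i=v(\bigvee p_i)=v$. The one genuinely delicate point---and the step I expect to be the main obstacle---is the continuity of $v_i=v p_i$ on $A_i$, since $v$ itself (the polar part of some $s\in C_0(X,\K)$) is typically discontinuous exactly where new small eigenvalues of $|s|$ appear. To handle it I would fix $x_0\in A_i$, choose $\delta>0$ below the least nonzero eigenvalue of $|s(x_0)|$ and outside $\mathrm{spec}\,|s(x_0)|$, and observe that the spectral projection $P_\delta:=\chi_{[\delta,\infty)}(|s|)$ is continuous near $x_0$ (the gap at $\delta$ persists) with $P_\delta(x_0)=p(x_0)$. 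Writing $v p_i=(vP_\delta)p_i+v(1-P_\delta)p_i$, the first term is continuous because $vP_\delta=s\,\tilde\rho(|s|)$ for a suitable continuous function $\tilde\rho$, while the second is dominated by $\|(1-P_\delta(x))p_i(x)\|=\|p_i(x)-P_\delta(x)p_i(x)\|$, which tends to $0$ as $x\to x_0$ precisely because the inequality $p_i\leq p$ shields $\mathrm{range}\,p_i$ from the vanishing eigenvalues. This gives continuity of $v_i$ and completes the construction.
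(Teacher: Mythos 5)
Your converse direction of (i) and the algebraic content of (ii) are correct and essentially coincide with the paper's argument (the paper likewise takes spectral projections onto the top $i$ eigenvalues on the sets where a spectral gap exists, defines $v_i:=vp_i$ in the converse of (ii), and realizes $\bigvee v_i$ as the polar part of a weighted sum). However, there are two genuine gaps. The first is in your forward direction of (i), on which your forward direction of (ii) also depends: the functions $f_i$ you posit need not exist. You require $f_i\in C_0(X)$ with $\{f_i>0\}=\interior{A_i}$; since the cozero set of any element of $C_0(X)$ equals $\bigcup_n\{f_i\geq 1/n\}$ and is therefore $\sigma$-compact, this forces $\interior{A_i}$ to be $\sigma$-compact, and Definition \ref{ROF-Defn} does not provide this: condition (3) makes $\bigcup_{j\geq i}A_j$ $\sigma$-compact, but $\sigma$-compactness does not pass to open subsets of locally compact Hausdorff spaces. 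Concretely, let $X$ be the one-point compactification of an uncountable discrete set $D$, let $p_1\leq p_2\leq p_3$ be constant projections of ranks $1,2,3$, and set $A_1=A_3=X$ (carrying $p_1,p_3$), $A_2=E$ (carrying $p_2$) with $E\subseteq D$ uncountable, and $A_i=\emptyset$ otherwise. This is a legitimate rank-ordered family (each $R_{\geq i}$ is $X$ or empty), yet $\interior{A_2}=E$ is uncountable and discrete, hence not $\sigma$-compact, so no continuous function on the compact space $X$ has cozero set $E$. Supplying admissible functions is exactly the technical heart of the paper's proof: there $\lambda_i$ is strictly positive only on a $\sigma$-compact open set $U$ with $A_i\setminus\bigcup_{j>i}A_j\subseteq U\subseteq A_i$, produced by Urysohn's lemma inside the $\sigma$-compact set $\bigcup_{j\geq i}A_j$. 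Your cofinality requirement is the right thing to aim for (it is what guarantees the join is attained at points lying in infinitely many $A_i$), but it has to be implemented with $\sigma$-compact supports, e.g.\ by shrinking the sets $\interior{A_i}$ to $\sigma$-compact open sets $U_i$ still satisfying $\bigcup_{j\geq i}U_j=\bigcup_{j\geq i}A_j$ for all $i$, in the spirit of Lemma \ref{ShrinkingLemma}.

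The second gap is in your continuity argument for $v_i=vp_i$ in the converse of (ii): it works only at points where $v^*v$ has finite rank, whereas the infinite-rank points are precisely what the infinite families are designed to handle. If $x_0\in A_i$ is a point where $\bigvee_j p_j$ has infinite rank, the nonzero spectrum of $|s(x_0)|$ accumulates at $0$, so there is no $\delta$ ``below the least nonzero eigenvalue,'' no choice of $\delta>0$ gives $P_\delta(x_0)=p(x_0)$, and your error term $\|(1-P_\delta(x))p_i(x)\|$ does not tend to $0$ as $x\to x_0$ (it need not even vanish at $x_0$). The statement is nevertheless true, and your decomposition can be repaired by an $\epsilon/3$-argument: given $\epsilon>0$, choose $\delta\notin\mathrm{spec}\,|s(x_0)|$ so small that $\|(1-P_\delta(x_0))p_i(x_0)\|<\epsilon$, which is possible because $P_\delta(x_0)\nearrow p(x_0)$ strongly as $\delta\downarrow 0$ and $p_i(x_0)$ has finite rank; then $vP_\delta p_i$ is continuous near $x_0$ as in your argument, while the error term stays below $2\epsilon$ near $x_0$ by norm continuity of $P_\delta$ and $p_i$. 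Alternatively one can dispense with spectral projections: from the norm continuity of $x\mapsto|s(x)|$ and the identity $v|s|=s$ one gets $v(x)\zeta\to v(x_0)\zeta$ for every $\zeta=|s(x_0)|\eta$, hence (by density and $\|v\|\leq 1$) for every $\zeta$ in the range of $p(x_0)$; since the range of $p_i(x_0)$ is a finite-dimensional subspace of the range of $p(x_0)$, this yields $\|(v(x)-v(x_0))p_i(x_0)\|\to 0$, which together with $\|p_i(x)-p_i(x_0)\|\to 0$ gives continuity of $vp_i$ at $x_0$ in one stroke.
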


\begin{proof}\proofnewline
(i) Let $(p_i,A_i)_{i=0}^\infty$ be a rank-ordered family of projections.
To see that $\bigvee p_i$ is a projection in $\RP(X)$, we shall construct a positive element by the formula
\[ \sum \lambda_i p_i, \]
where $\lambda_i\colon X \to [0,2^{-i}]$ is a continuous function which is zero outside of $A_i$ and non-zero on $A_i \setminus \bigcup_{j > i} A_j$.
Such a sum converges in $C_0(X, \K)$ to an element whose pointwise range projection is exactly $\bigvee p_i$.

To see that the function $\lambda_i$ exists, we need to exhibit a $\sigma$-compact open set $U$ satisfying
\[ A_i \setminus \bigcup_{j > i} A_j \subseteq U \subseteq A_i, \]
for then $\lambda_i$ is given by a strictly positive element of $C_0(U)$.
The set $A_i \setminus \bigcup_{j > i} A_j$ is relatively closed within the $\sigma$-compact set $\bigcup_{j \geq i} A_j$.
By Urysohn's lemma, there exists a function $f\colon\bigcup_{j \geq i} A_j \to \mathbb{R}$ such that $f(A_i \setminus \bigcup_{j > i} A_j) = 1$ and $f(\bigcup_{j \geq i} A_j \setminus A_i) = 0$.
We may set $U = f^{-1}((\frac{1}{2}, \infty))$.
The set $U$ is open since it is relatively open within the open set $\bigcup_{j \geq i} A_i$.
It is $\sigma$-compact since it is a relatively $G_\delta$ subset of the $\sigma$-compact set $\bigcup_{j \geq i} A_j$.

Conversely, suppose $a \in C_0(X, \K)^+$ and $p(x) = \chii_{(0,\infty)}(a(x))$ for all $x$.
At each point $x \in X$, let $\sigma_1(a)(x), \sigma_2(a)(x), \dots$ be the list of eigenvalues of $a(x)$ in non-increasing order (so that $\sigma_i(a) \in C_0(X)$).
Then let $A_0 = X$ and 
\[ A_i = \{x \in X \mid  \sigma_i(a)(x) > \sigma_{i+1}(a)(x)\} \]
for $i \geq 1$.
Since each function $\sigma_i$ is continuous and vanishing at $\infty$, the sets $A_i$ are open and $\sigma$-compact.
By the choice of $A_i$, we may define $p_i(x)$ to be the spectral projection of $a(x)$ onto the $i$ greatest eigenvalues. It is clear that this definition makes $p_i$ continuous.
Moreover, since for each $x \in X$, $\sigma_i(x) \searrow 0$, we see that 
\[ R_{\geq i}(a) = \{x \mid \sigma_i(x) > 0\} = \bigcup_{j \geq i} A_j, \]
and from this it is easy to see that $p = \bigvee p_i$.

(ii) We can show that $\bigvee v_i$ is a partial isometry from a pointwise polar decomposition by the same argument used
to show that  $\bigvee p_i$ is in $\RP(X)$.

Conversely, for $v$ a partial isometry arising in the polar decomposition $s(x) = v(x)\big|s(x)\big|$ of some
$s\in C_0(X, \K)$, and $(p_i,A_i)_{i=0}^\infty$ 
a rank-ordered family for the pointwise range projection of $s^*s$, we can define $v_i(x) = v(x)p_i(x)$, for $x\in A_i$.
The resulting family $(v_i,A_i)_{i=0}^\infty$ is a rank-ordered family of partial isometries and it is easily verified
that $v(x)=\lim_{\{i|x\in A_i\}} v_i(x)$ for all $x\in X$.
\end{proof}

If $p\in \RP(X)$ and  $(p_i,A_i)_{i=0}^\infty$ is a rank-ordered family of  projections
such that $p=\bigvee p_i$, we say that $(p_i,A_i)_{i=0}^\infty$ is a rank-ordered family
for $p$. Similarly, if  $v\in \PIPD(X)$ and $(v_i,A_i)_{i=0}^\infty$ is such that 
$v=\bigvee v_i$ we say that  $(v_i,A_i)_{i=0}^\infty$ is a rank-ordered family for $v$.
From the definition of rank-ordered families, if $(p_i,A_i)_{i=0}^\infty$ is a rank-ordered
family for $p$, then so is  $(p_i,\interior{A_i})_{i=0}^\infty$, and similarly for the
rank-ordered families of partial isometries.

\begin{remark}\alabel{ROF-ConstructionRmk}
The proof of Proposition \ref{ROF-Characterization} (i) is constructive: given a rank-ordered family $(p_i, A_i)_{i=0}^\infty$, it produces a positive element $a \in C_0(X, \K)^+$, and given a positive element $a \in C_0(X, \K)^+$, it produces a rank-ordered family $(p_i, A_i)_{i=0}^\infty$.
Moreover, a close look at the constructions involved reveals that if we begin with a rank-ordered family $(p_i, A_i)_{i=0}^\infty$, obtain a positive element $a$, and then obtain a new rank-ordered family $(q_i, B_i)_{i=0}^\infty$, then the new rank-ordered family will almost coincide with the given one: $B_i \subseteq A_i$ and
\[ q_i = p_i|_{B_i}. \]
In addition, in the situation that $A_i$ is $\sigma$-compact and open, we may arrange that $B_i = A_i$.
\end{remark}

\subsection{A few technical lemmas}

If $a$ is either a projection in $\RP(X)$ given by a rank-ordered family $(p_i,A_i)_{i=0}^\infty$, or a partial isometry in $\PIPD(X)$ given by a rank-ordered family $(v_i,A_i)_{i=0}^\infty$, then we have
\[ R_{\geq i}(a) = \bigcup_{j \geq i} A_i. \]
Notice that if the sets $A_i$ are replaced by smaller sets $A_i'\subseteq A_i$ such that 
\begin{equation}
R_{\geq i}(a) = \bigcup_{j \geq i} \interior{A_i'} \quad \text{for all $i$,} \alabel{ROF-SetCondition}
\end{equation}
then  the resulting rank-ordered family 
$(p_i|_{A_i'},A_i')_{i=0}^\infty$ (or $(v_i|_{A_i'},A_i')_{i=0}^\infty$) is still a rank-ordered family for $a$.
The condition \eqref{ROF-SetCondition} is equivalent to the following two conditions: $A_i'$ is a neighbourhood of $R_{=i}(a)$ for all $i$, and 
\[ R_{=\infty}(a) = \limsup_i \interior{A_i} := \bigcap_{i = 1}^\infty \bigcup_{j \geq i} \interior{A_j}. \]

The observation that we may get a new rank-ordered family by slightly shrinking the sets $A_i$, combined with the following lemma, allows us to find for the elements of
$\RP(X)$ and $\PIPD(X)$ rank-ordered families whose sets $A_i$ are relatively closed in $R_{\geq i}(a)$.

\begin{lemma}\alabel{ShrinkingLemma}
Let $(U_i)_{i=1}^\infty$ be an open cover of $X$, such that for each $i$, $R_{\geq i} := \bigcup_{j \geq i} U_i$ is $\sigma$-compact.
Then there exists a cover $(A_i)_{i=1}^\infty$ of $X$ such that for each $i$, we have:
\begin{enumerate}
\item \alabel{ShrinkingLemma-CoveringCondition} $\bigcup_{j \geq i} A_j = \bigcup_{j \geq i} \interior{A_j} = R_{\geq i}$, and
\item $A_i$ is relatively closed in $\bigcup_{j \geq i} A_j$.
\end{enumerate}
\end{lemma}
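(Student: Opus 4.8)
The plan is to realize the $A_i$ as relative closures of a carefully chosen open shrinking of the $U_i$. First observe that, since the $U_i$ cover $X$, we have $R_{\geq 1} = X$, so the hypothesis forces $X$ itself to be $\sigma$-compact; being also locally compact and Hausdorff, $X$---and likewise each open subset $R_{\geq i}$---is paracompact and normal. It therefore suffices to produce open sets $W_i \subseteq U_i$ such that $\overline{W_i}^{R_{\geq i}} \subseteq U_i$ (relative closure in $R_{\geq i}$) and $\bigcup_{j \geq i} W_j = R_{\geq i}$ for every $i$. Indeed, setting $A_i := \overline{W_i}^{R_{\geq i}}$ then gives sets that are relatively closed in $R_{\geq i}$ with $W_i \subseteq A_i \subseteq U_i$; since $R_{\geq i}$ is open, $\interior{A_i} \supseteq W_i$, and combining $\bigcup_{j\geq i}W_j = R_{\geq i}$ with $A_j \subseteq R_{\geq j} \subseteq R_{\geq i}$ for $j \geq i$ yields both equalities in (1), while (2) holds by construction. (This also produces the containment $A_i \subseteq U_i$, which is what makes the lemma usable for shrinking rank-ordered families while keeping the projections $p_i$ defined.)

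The essential difficulty is that $(U_i)$ need not be point-finite: a point of $R_{=\infty} := \bigcap_i R_{\geq i}$ lies in infinitely many $U_i$, and in fact the requirement $\bigcup_{j\geq i}W_j = R_{\geq i}$ \emph{for all} $i$ forces such a point to lie in $W_j$ for arbitrarily large $j$. Hence the classical point-finite shrinking lemma cannot be applied to $(U_i)$ directly. I would instead exploit paracompactness one level at a time. For each fixed $n$, the sets $\{U_j\}_{j \geq n}$ form an open cover of the paracompact space $R_{\geq n}$, so the shrinking lemma for paracompact spaces provides a locally finite (within $R_{\geq n}$) open family $\{\widetilde W^{(n)}_j\}_{j \geq n}$ with $\widetilde W^{(n)}_j \subseteq U_j$, with $\overline{\widetilde W^{(n)}_j}^{R_{\geq n}} \subseteq U_j$, and with $\bigcup_{j \geq n} \widetilde W^{(n)}_j = R_{\geq n}$.

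I would then reassemble these level-wise shrinkings by a finite diagonal union, $W_j := \bigcup_{n \leq j} \widetilde W^{(n)}_j$. Because this is a finite union of sets whose relative closures lie in $U_j$, we still have $\overline{W_j}^{R_{\geq j}} \subseteq U_j$ (using that, for $n \leq j$, $\overline{\widetilde W^{(n)}_j}^{R_{\geq j}} = \overline{\widetilde W^{(n)}_j}^{R_{\geq n}} \cap R_{\geq j} \subseteq U_j$). For the covering condition, fix $i$ and $x \in R_{\geq i}$; applying the level-$i$ shrinking gives $x \in \widetilde W^{(i)}_j$ for some $j \geq i$, and since $i \leq j$ this set is one of the pieces of $W_j$, so $x \in W_j$ with $j \geq i$. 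The reverse inclusion is immediate from $W_j \subseteq U_j$, so $\bigcup_{j\geq i} W_j = R_{\geq i}$, completing the reduction.

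The step I expect to require the most care is the level-wise application of the shrinking lemma together with the verification that the finite diagonal reassembly preserves the relative-closure control; this is precisely where $\sigma$-compactness (hence paracompactness and normality) of the sets $R_{\geq i}$ enters, replacing the point-finiteness that the naive shrinking lemma would demand. The infinite-multiplicity points $R_{=\infty}$ are handled automatically in this scheme: running the shrinking at every level $n$ forces each such point to be recaptured by some $\widetilde W^{(n)}_j$ with $j \geq n$, and these indices tend to infinity as $n$ grows, giving exactly the cofinal coverage that (1) demands.
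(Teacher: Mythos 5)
Your proof is correct, and it shares the paper's overall architecture: both arguments carry out a shrinking at each level $n$ (a family indexed by $j \geq n$ covering $R_{\geq n}$, with relative closures controlled inside $U_j$) and then reassemble by the finite diagonal union over levels $n \leq j$, which preserves the closure control and produces the cofinal coverage required at points of $R_{=\infty}$. The difference lies in the key lemma used for the level-wise step. The paper builds its level-$n$ cover by hand: Urysohn's lemma yields basic open sets $V_\alpha^{(j)}$ with $R_{=j} \subseteq V_\alpha^{(j)} \subseteq \overline{V_\alpha^{(j)}} \cap R_{\geq j} \subseteq U_j$, and a compact exhaustion of $R_{\geq n}$ is then covered greedily by finite unions of these; the inclusion $R_{=j} \subseteq V_\alpha^{(j)}$ is what forces the uncovered remainder of each compact set into higher rank sets, so the greedy process can proceed with ever increasing indices. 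You instead invoke the shrinking lemma for paracompact spaces, after noting that each $R_{\geq i}$ (and $X = R_{\geq 1}$ itself) is $\sigma$-compact, locally compact, and Hausdorff, hence Lindel\"of and regular, hence paracompact and normal. Both are valid: your route is shorter and more conceptual, at the cost of citing a nontrivial piece of general topology, whereas the paper's is self-contained, using only Urysohn's lemma and the $\sigma$-compactness hypothesis it explicitly assumes. Your diagnosis that point-finiteness fails precisely on $R_{=\infty}$, so that a single application of the classical shrinking lemma cannot work, is exactly the difficulty both proofs address with the diagonal reassembly; and like the paper's construction, yours delivers the extra containment $A_i \subseteq U_i$ that the lemma's applications (e.g., Remark \ref{ROF-HalfOpen}) actually require.
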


\begin{proof}
Using notation that reflects the intended use of this lemma, we shall set $R_{=i} = R_{\geq i} \setminus R_{\geq i+1}$.
We shall find open sets $B_i$ for which the relative closure $A_i := \overline{B_i} \cap R_{\geq i}$ is contained in $U_i$, and satisfying
\[ \bigcup_{j \geq i} B_j = R_{\geq i}. \]

\claim
$U_j$ can be covered by open sets $V_\alpha^{(j)}$ which satisfy
\[ R_{=j} \subseteq V_\alpha^{(j)} \subseteq \closure{V_\alpha^{(j)}} \cap R_{\geq j} \subseteq U_j. \]

\begin{proof}[Proof of claim.]
Let $x \in U_i$ be given.
Since $R_{\geq i+1}$ is open, $R_{=i}$ is relatively closed in $R_{\geq i}$.
Since $R_{\geq i}$ is $\sigma$-compact, we may use Urysohn's lemma to obtain a continuous function $f\colon R_{\geq i} \to \mathbb{R}$ such that $f(R_{=i}) = f(x) = 1$ and $f(R_{\geq i} \setminus U_i) = 0$.
Set $V = f^{-1}(\frac{1}{2}, \infty)$.
It is easy to verify the required inclusions.
\end{proof}

We will create our sets $B_i$ as the finite unions of sets $V_\alpha^{(i)}$ from the claim.
Let us see first that we can find such sets $B_i$ that satisfy the condition \eqref{ShrinkingLemma-CoveringCondition} for $i=1$.

Since $R_{\geq 1}$ is $\sigma$-compact, let $(K_t)_{t=1}^\infty$ be a countable cover consisting of compact subsets.
$K_1$ is compact, so it may be covered by finitely many sets of the form $V_\alpha^{(i)}$.
This gives us some $n_1 \geq 1$ and sets $B_i$ which are finite unions of $V_\alpha^{(i)}$'s, for $i=1, \dots, n_1$, such that $K_1 \subseteq B_1 \cup \cdots \cup B_{n_1}$.
By requiring $B_i$ to be a nonempty union, we have $R_{= i} \subseteq B_i$ for each $i$.

Now, $K_2 \setminus (B_1 \cup \cdots \cup B_{n_1})$ is compact and contained in $R_{\geq n_1 + 1}$.
Thus, it is covered by finitely many sets of the form $V_\alpha^{(i)}$, with $i \geq n_1 + 1$.
Again, this allows us to obtain $n_2 \geq n_1$ and sets $B_i$ for $i=n_1 + 1, \dots n_2$ as above.
Continuing on, we will eventually cover all the sets $K_j$, and thus all of $R_{\geq 1}$.

Let us now label the sets $(B_i^{(1)})_{i=1}^\infty$, where the superscript $(1)$ denotes the fact that their union covers $R_{\geq 1}$.
We may likewise find a sequence of sets $(B_i^{(k)})_{i=k}^\infty$ such that each $B_i^{(k)}$ is a finite union of sets $V_\alpha^{(i)}$, and the sets cover $R_{\geq k}$.
If we now let $B_i = \bigcup_{k=1}^i B_i^{(k)}$, then since the union is finite, the relative closure in $R_{\geq i}$ is still a subset of $U_i$, and now
\[ R_{\geq k} = \bigcup_{i \geq k} B_i \]
for each $k$.
\end{proof}

\begin{remark} \alabel{ROF-HalfOpen}
Let  $p\in \RP(X)$  and  let $(p_i,U_i)_{i=0}^\infty$ be a rank-ordered family for $p$.
Restricting  to the interiors of sets $U_i$ if necessary, we may assume that the sets
$U_i$ are open. Then by Lemma \ref{ShrinkingLemma}, we may find sets $A_i\subseteq U_i$, relatively closed
in $R_{\geq i}(p)$, and such that $(p_i|_{A_i},A_i)_{i=0}^\infty$ is a rank-ordered family for $p$.
Likewise, for every partial isometry $v\in \PIPD(X)$ there exists a rank-ordered family of partial isometries $(v_i,A_i)_{i=0}^\infty$ for $v$ such that  $A_i$ is relatively closed in $R_{\geq i}(v)$ for all $i$. 
\end{remark}

In certain situations, it is desirable to obtain rank-ordered families which are compatible with certain given data.
The following two lemmas provide instances where this is possible.

\begin{lemma} \alabel{ROF-ProjExtension}
Let $X$ be a $\sigma$-compact locally compact Hausdorff space and $Y \subseteq X$ a closed subset.
Let $p$ be a projection in $\RP(X)$. Let $(p_i, B_i)_{i=0}^\infty$ be a rank-ordered family of projections for $p|Y$.
Then there exists a rank-ordered family $(\tilde{p}_i, A_i)_{i=0}^\infty$ for $p$ such that $A_i \cap Y \subseteq B_i$ and $\tilde{p}_i|_{A_i \cap Y} = p_i|_{A_i \cap Y}$  for each $i$.
\end{lemma}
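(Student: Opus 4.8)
The plan is to build the extended family by combining two ingredients: near $Y$ we extend the given projections $p_i$ to continuous rank-$i$ subprojections of $p$, while away from $Y$ we use a canonical rank-ordered family for $p$ supplied by Proposition \ref{ROF-Characterization}, and we glue the two over sets chosen to be separated so that continuity is automatic. As a preliminary, I would use Remark \ref{ROF-HalfOpen} to replace the given family by one in which each $B_i$ is relatively closed in $R_{\geq i}(p)\cap Y$; since $Y$ is closed and $R_{\geq i}(p)$ is open in the $\sigma$-compact space $X$, each $B_i$ is then closed in the locally compact $\sigma$-compact space $R_{\geq i}(p)$. Note also that compatibility axiom \ref{ROF-Compatibility} for the given family forces $p_i \leq p$ on $B_i$, so each $p_i$ is already a rank-$i$ subprojection of $p$.

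The central step is to extend $p_i$, from the closed set $B_i$, to a continuous rank-$i$ subprojection $\tilde p_i$ of $p$ on an open neighbourhood $W_i$ in $R_{\geq i}(p)$, in such a way that the extensions are nested ($\tilde p_{i-1} \leq \tilde p_i$ on overlaps). To do this I would fix $a \in C_0(X,\K)^+$ with pointwise range projection $p$ and work locally through the eigenprojections of $a$: around a point $x_0$ with $\rank p(x_0) = k \geq i$, the gap $\sigma_k(a)(x_0) > \sigma_{k+1}(a)(x_0)$ persists on a neighbourhood, so the spectral projection $q$ onto the top $k$ eigenvalues of $a$ is a \emph{continuous} rank-$k$ projection there with $q \leq p$. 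Extending $p_i$ as a subprojection of the genuine (locally trivial) rank-$k$ bundle determined by $q$ is standard: Tietze-extend the self-adjoint element $p_i$ inside the corner cut out by $q$, then apply the functional calculus $\chii_{(1/2,\infty)}$ to obtain a projection which agrees with $p_i$ on $B_i$ and has constant rank $i$ on a smaller neighbourhood; because the construction stays beneath $q$ it remains $\leq p$. Patching these local extensions with a partition of unity, and carrying out the construction inductively in $i$ so that the rank-one increment is extended inside $p - \tilde p_{i-1}$, yields nested extensions $\tilde p_i$ on neighbourhoods $W_i$ of $B_i$.

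To assemble the family, let $(q_i, C_i)_{i=0}^\infty$ be a canonical rank-ordered family for $p$ on $X$ with the $C_i$ open and $\sigma$-compact. I would choose neighbourhoods $N_1 \subseteq \closure{N_1} \subseteq N_0$ of $Y$ and take each $A_i$ to be a set that splits into two separated pieces: a trimmed near-piece contained in $W_i \cap N_0$, containing $B_i$ and with $A_i \cap Y \subseteq B_i$ (possible since $B_i$ is relatively closed in $R_{\geq i}(p)\cap Y$), together with an away-piece contained in $C_i \setminus \closure{N_1}$. Applying Lemma \ref{ShrinkingLemma} to this cover of $R_{\geq i}(p)$ arranges the covering and $\sigma$-compactness axioms while keeping each $A_i$ relatively closed in $R_{\geq i}(p)$. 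On $A_i$ I would define $\tilde p_i$ to equal the near-extension on the near-piece and $q_i$ on the away-piece; because the two pieces have disjoint closures this is continuous of constant rank $i$. Compatibility axiom \ref{ROF-Compatibility} then holds because on overlaps the two projections are either both near-extensions (nested by construction) or both canonical projections $q_i \leq q_j$; the identity $\bigvee \tilde p_i = p$ follows from $\bigcup_{j \geq i} A_j = R_{\geq i}(p)$; and since $A_i \cap Y \subseteq B_i$, the required equality $\tilde p_i|_{A_i \cap Y} = p_i|_{A_i \cap Y}$ holds automatically.

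I expect the main obstacle to be the near-$Y$ extension of the projections as \emph{nested} subprojections of $p$ across the rank-jump strata of $p$, since neither $p$ nor its corner is globally continuous: this is what forces the stratum-by-stratum argument through the eigenprojections of $a$ and the inductive flag construction, and it is also why the sets $A_i$ must be organized into separated near- and away-pieces so that the glued $\tilde p_i$ is a genuine continuous projection while all the covering axioms and the constraint $A_i \cap Y \subseteq B_i$ are met simultaneously.
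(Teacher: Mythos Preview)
Your approach differs substantially from the paper's. The paper does not extend the projections $p_i$ geometrically at all; it works through positive elements. From the given family $(p_i,B_i)$ one builds, via the construction in the proof of Proposition~\ref{ROF-Characterization}(i), a strictly positive element $b$ of the hereditary subalgebra $\{c\in C_0(Y,\K):c=p|_Y\,c\,p|_Y\}$. This hereditary subalgebra is the image of $\{c\in C_0(X,\K):c=p\,c\,p\}$ under the restriction map, so $b$ lifts to a strictly positive element $a$ of the latter with $a|_Y=b$. Applying the eigenvalue construction of Proposition~\ref{ROF-Characterization}(i) to $a$ yields $(\tilde p_i,A_i)$, and by naturality (Remark~\ref{ROF-ConstructionRmk}) the restriction of this construction to $Y$ refines the original family, giving $A_i\cap Y\subseteq B_i$ and $\tilde p_i|_{A_i\cap Y}=p_i|_{A_i\cap Y}$ for free. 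No gluing, no local extension of projections, no partitions of unity.

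Your gluing argument has a genuine gap in the compatibility check. You assert that on $A_i\cap A_j$ the two projections are ``either both near-extensions \dots\ or both canonical projections $q_i\leq q_j$'', but this dichotomy fails. The near-piece of $A_i$ lies in $N_0$ and the away-piece of $A_j$ lies in $X\setminus\closure{N_1}$, and since $\closure{N_1}\subseteq N_0$ these regions meet in $N_0\setminus\closure{N_1}$. A point there can lie in the near-piece of $A_i$ and the away-piece of $A_j$ simultaneously, so you would need your near-extension to satisfy $\tilde p_i\leq q_j$ (or $q_i\leq\tilde p_j$); there is no reason for this, since the spectral projections $q_j$ of your auxiliary element bear no relation to the prescribed data on $Y$. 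Inserting a buffer zone that every $A_i$ avoids does not help either, because then $\bigcup_{j\geq i}A_j$ cannot equal $R_{\geq i}(p)$. The inductive nesting of the near-extensions is also more delicate than your sketch suggests: on $(B_i\cap W_{i-1})\setminus B_{i-1}$ the datum $p_i$ is prescribed but $\tilde p_{i-1}$ is an arbitrary extension, so $\tilde p_{i-1}\leq p_i$ need not hold there and ``extending the rank-one increment inside $p-\tilde p_{i-1}$'' does not directly make sense. These are exactly the difficulties that the paper's lifting-of-positive-elements argument sidesteps.
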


\begin{proof}
Let $b \in C_0(Y, \K)^+$ be obtained from the rank-ordered family $(q_i, B_i)_{i=0}^\infty$ as in the proof of  Proposition \ref{ROF-Characterization} (i), so that the pointwise range projection of $b$ is $p|_Y$.
Then $b$ is strictly positive in the hereditary subalgebra $\{c \in C_0(Y, \K) \mid c = p|_Y c p|_Y\}$ of 
$C_0(Y,\K)$, and this hereditary subalgebra is the image under the quotient map $C_0(X,\K) \to 
C_0(Y,\K)$ of the singly generated hereditary subalgebra $\{c \in C_0(X, \K)\mid  c = pcp\}$.
Thus, $b$ lifts to a strictly positive element $a$ of $\{c \in C_0(X, \K)\mid  c = pcp\}$, ie.\ $a|_Y = b$.

Let $(\tilde{p}_i, A_i)_{i=0}^\infty$ be the rank-ordered family given from $a$ by Proposition \ref{ROF-Characterization} (i). 
It is clear that the construction in Proposition \ref{ROF-Characterization} (i) is natural, so that $(\tilde{p}_i|_{A_i \cap Y}, A_i \cap Y)_{i=0}^\infty$ is the rank-ordered family that would be given from $a|_Y$.
So, by Remark \ref{ROF-ConstructionRmk}, we see that $A_i \cap Y \subseteq B_i$ and
\[ \tilde{p}_i|_{A_i \cap Y} = p_i|_{A_i \cap Y}. \qedhere
\]
\end{proof}

\begin{lemma}\alabel{ROF-ProjExtension2}
Let $X$ be a $\sigma$-compact locally compact Hausdorff space.
Let $(Y_i)_{i=1}^n$ be a family of closed subsets of $X$.  Suppose that for every $i$ we are
given a continuous projection $p_i\colon Y_i\to \mathcal K$ such that 
\[
p_i|_{Y_i\cap Y_j}\leq p_j|_{Y_i\cap Y_j}
\]
for all $i$ and $j$.  Let $q$ be a projection in $\RP(X)$  such
that $p_i\leq q|_{Y_i}$  for all $i$. Then there is a rank-ordered family $(q_i,A_i)_{i=1}^\infty$
for $q$, such that $p_i|_{A_i\cap U_j}\leq q_j|_{A_i\cap U_j}$  for all $i$ and $j$. 
\end{lemma}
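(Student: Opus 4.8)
The plan is to reduce to the case of a single closed set and then invoke Lemma \ref{ROF-ProjExtension}. Applying the hypothesis to the pairs $(i,i')$ and $(i',i)$ shows that $p_i = p_{i'}$ on $Y_i \cap Y_{i'}$, so the projections $p_i$ agree on their overlaps and glue to a single continuous projection $p$ on the closed set $Y := \bigcup_{i=1}^n Y_i$, with $p \leq q|_Y$ and $p|_{Y_i} = p_i$. It therefore suffices to produce a rank-ordered family $(q_j, A_j)_j$ for $q$ such that $p \leq q_j$ on $Y \cap A_j$ for every $j$; restricting to $Y_i$ and using $\mathrm{range}(p) \cap Y_i$ then yields $p_i \leq q_j$ on $Y_i \cap A_j$, which is the desired conclusion. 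Since $Y$ is closed in the $\sigma$-compact space $X$, it is again locally compact, Hausdorff, and $\sigma$-compact, so I may first build a suitable family over $Y$ and afterwards extend it over $X$.

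To build the family over $Y$, note that $q|_Y \in \RP(Y)$, so choose $a_Y \in C_0(Y,\K)^+$ with pointwise range projection $q|_Y$. The key construction is to replace $a_Y$ by a positive element whose top eigendirections absorb $p$. Set $b := (1-p)a_Y(1-p) \in C_0(Y,\K)^+$. Since $p \leq q|_Y$, a short computation shows that $pb = bp = 0$ and that the pointwise range projection of $b$ is exactly $q|_Y - p$. Choosing $\mu \in C_0(Y)$ with $\mu > \sigma_1(b)$ pointwise (for instance $\mu = \sigma_1(b) + \nu$ for a strictly positive $\nu \in C_0(Y)$), I form
\[ a := \mu p + b \in C_0(Y,\K)^+. \]
Because the ranges of $p$ and $b$ are orthogonal and $\mu$ strictly dominates every eigenvalue of $b$, at each $x$ the largest $\rank p(x)$ eigenvalues of $a(x)$ all equal $\mu(x)$ with eigenspace precisely $\mathrm{range}(p(x))$; moreover the range projection of $a$ is $p + (q|_Y - p) = q|_Y$.

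Now apply the converse direction of Proposition \ref{ROF-Characterization}(i) to $a$, obtaining a rank-ordered family $(q_j^Y, A_j^Y)_{j}$ for $q|_Y$, where $q_j^Y(x)$ is the spectral projection of $a(x)$ onto its $j$ largest eigenvalues and $A_j^Y = \{x \mid \sigma_j(a)(x) > \sigma_{j+1}(a)(x)\}$ for $j \geq 1$. By the eigenvalue structure above, for $1 \leq j < \rank p(x)$ one has $\sigma_j(a)(x) = \sigma_{j+1}(a)(x) = \mu(x)$, so no gap occurs there and $x \notin A_j^Y$; hence $x \in A_j^Y$ forces $j \geq \rank p(x)$, whence $q_j^Y(x) \geq p(x)$ (the rank-$0$ term $q_0^Y = 0$ carries no content and the conclusion is indexed from $1$). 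Thus $p \leq q_j^Y$ on $A_j^Y$ for all $j \geq 1$. Finally, I invoke Lemma \ref{ROF-ProjExtension} with the closed set $Y$ and the family $(q_j^Y, A_j^Y)$ for $q|_Y$, obtaining a rank-ordered family $(q_j, A_j)_j$ for $q$ with $A_j \cap Y \subseteq A_j^Y$ and $q_j|_{A_j \cap Y} = q_j^Y|_{A_j \cap Y}$. On $A_j \cap Y$ we then have $q_j = q_j^Y \geq p$, and restriction to $Y_i$ gives $p_i \leq q_j$ on $Y_i \cap A_j$, as required.

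The main obstacle is the middle step: arranging a positive element over $Y$ whose induced rank-ordered family dominates the prescribed sub-projection $p$. The orthogonal splitting $a = \mu p + b$ is what makes this work, since it guarantees that the $p$-directions sit strictly above all remaining eigendirections and are therefore swallowed by every $q_j^Y$ of sufficiently large rank, while simultaneously ensuring no eigenvalue gap of $a$ appears below $\rank p$. The two calculations to verify with care are that $b$ has pointwise range projection $q|_Y - p$ and that the range projection of $a$ is $q|_Y$; once these are in hand, the reduction to a single $p$ by gluing and the passage from $Y$ back to $X$ through Lemma \ref{ROF-ProjExtension} are routine.
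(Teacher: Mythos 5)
Your opening reduction---that the $p_i$ agree on overlaps and therefore glue to a single continuous projection $p$ on $\bigcup_i Y_i$---rests on reading the hypothesis ``$p_i|_{Y_i\cap Y_j}\leq p_j|_{Y_i\cap Y_j}$ for all $i$ and $j$'' symmetrically, so that both inequalities hold on each overlap and force equality. That reading takes a slip in the statement at face value: the intended hypothesis is the ordered one, $p_i\leq p_j$ on $Y_i\cap Y_j$ for $i\leq j$ only, exactly as in condition \eqref{ROF-Compatibility} of Definition \ref{ROF-Defn}, which this lemma is mimicking over closed sets. This is unambiguous from the paper: in the proof of Theorem \ref{Dim3-Embedding-Iso}(ii) the lemma is applied to the projections $\tilde p_i^{(1)}=v_iv_i^*$, which have rank $i$ and hence are genuinely nested---not equal---on overlaps; and the paper's own proof manipulates differences of the $p_i$ (on $Y_n\cap Y_{n-1}$ it rewrites $b=\lambda p_n+b_n$ as $\lambda p_{n-1}+b_{n-1}$ with $b_{n-1}=\lambda(p_n-p_{n-1})+b_n$), which only makes sense for nested, generally distinct, projections. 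Under the intended hypothesis your first step fails: there is no single continuous $p$ (the pointwise supremum $\bigvee_{i\,:\,x\in Y_i}p_i(x)$ is in general not continuous, so it cannot be fed into your construction $\mu p+b$), and the rest of your argument, phrased entirely in terms of $p$, collapses with it. The real content of the lemma---producing one positive element whose induced rank-ordered family simultaneously dominates several nested projections of different ranks on different closed sets---is never engaged.

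That said, your argument is a correct proof of the case of a single closed set, and there it is essentially the paper's own device: your element $a=\mu p+b$, with $b=(1-p)a_Y(1-p)$ of pointwise range projection $q|_Y-p$ (as in Proposition \ref{ComplementedMultiplierProjs}) and $\mu>\sigma_1(b)$, plays the same role as the paper's decomposition $b=\lambda p_i+b_i$ with $b_i$ of range projection $q-p_i$ and $\|b_i(x)\|\leq\lambda(x)$: both force the top $\rank p(x)$ eigendirections of the element to be exactly the range of $p(x)$, so that the sets of the induced rank-ordered family avoid the region where the family's rank falls below $\rank p$; and your passage from $Y$ to $X$ via Lemma \ref{ROF-ProjExtension} is a clean packaging of the paper's final lifting step. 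What is missing is the mechanism that handles all the $Y_i$ at once: the paper defines $b$ on $Y_n$ by (the analogue of) your formula, observes that on $Y_n\cap Y_{n-1}$ it automatically has the corresponding form $\lambda p_{n-1}+b_{n-1}$ relative to $p_{n-1}$, extends $b_{n-1}$ from $Y_n\cap Y_{n-1}$ to all of $Y_{n-1}$ while preserving its range projection $q-p_{n-1}$ and the bound by $\lambda$, and repeats this decreasing induction down to $Y_1$ before extending to $X$. Repairing your write-up means replacing the gluing step by this induction; the single-set machinery you built can then be retained as the base construction.
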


\begin{proof}
Let $\lambda\in C_0(X)^+$ be strictly positive.
Let us show that there is $b\in C_0(X,\mathcal K)^+$ with pointwise range projection equal to $q$ and such that
for every $i=1,2,\dots,m$  we have $b=\lambda p_i+b_i$, with $b_i\in C_0(Y_i,K)^+$ such that
$b_i$ has pointwise range  projection $q-p_i$ and $\|b_i(x)\|\leq \lambda(x)\hbox{ for all }x\in C_i$.

Let $\tilde b\in C_0(X,\mathcal K)^+$ have pointwise range projection $q$ and norm at most 1. 
Let us define $b$ on $Y_n$ by $b=\lambda p_n+b_n$, where  
$b_n=(1-p_n)\lambda \tilde b(1-p_n)$. Notice that on the set $Y_n\cap Y_{n-1}$ the element $b$ admits the decomposition $\lambda p_{n-1}+b_{n-1}$, where 
$b_{n-1}=\lambda(p_n-p_{n-1})+b_n$  has pointwise range projection equal to 
$q-p_{n-1}$ and satisfies that $\|b_{n-1}(x)\|\leq \lambda(x)$ for all $x\in Y_n\cap Y_{n-1}$. We proceed to define
$b$ on the set $Y_n\cup Y_{n-1}$ in the following way: extend $b_{n-1}$ from $Y_n\cap Y_{n-1}$ to $Y_{n-1}$
in such a way that its pointwise range projection is $q-p_{n-1}$ and such that $\|b_{n-1}(x)\|\leq \lambda(x)$ for all
$x\in Y_{n-1}$; set $b=\lambda p_{n-1}+b_{n-1}$ on $Y_{n-1}$. Now notice that on the set 
$(Y_n\cup Y_{n-1})\cap Y_{n-2}$  the element $b$ admits the decomposition $b=\lambda p_{n-2} +b_{n-2}$,
where $b_{n-2}\in C_0\left((Y_n\cup Y_{n-1})\cap Y_{n-2},\K\right)^+$ has pointwise range projection $q-p_{n-2}$ and $\|b_{n-2}(x)\|\leq \lambda(x)$ for all $x\in (Y_n\cup Y_{n-1})\cap Y_{n-2}$. As before, we extend $b_{n-2}$ to 
$Y_{n-2}$ such that these properties are preserved and set $b=\lambda p_{n-2}+b_{n-2}$ on $Y_{n-2}$. 
This process is continued until $b$ is defined on the set $\bigcup_{i=1}^n Y_i$. We then extend $b$ to an element in $C_0(X,\mathcal K)$ with pointwise range projection $q$. 

It is not hard to check that the rank-ordered family arising from $b$ (by the construction
in the proof of Proposition \ref{ROF-Characterization} (i)) has the properties stated in this lemma.
\end{proof}

\begin{prop}\alabel{ComplementedMultiplierProjs} (cf. \cite[Lemma 2.2]{PereraToms:recasting})
Let $p$ be a continuous projection and $q$ a pointwise range projection, such that $p \leq q$.
Then $q - p$ is a pointwise range projection.
\end{prop}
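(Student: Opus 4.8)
The plan is to exhibit a single positive element of $C_0(X,\K)$ whose pointwise range projection equals $q - p$; by the definition of $\RP(X)$ this is exactly what must be shown. Since $q \in \RP(X)$, I would fix $b \in C_0(X,\K)^+$ with $\chii_{(0,\infty)}(b(x)) = q(x)$ for all $x$, and then compress $b$ by $1-p$, setting
\[ a := b - pb - bp + pbp. \]
Because $p$ is a \emph{continuous} projection, $p \in C_b(X,\K)$, so each of $pb$, $bp$, $pbp$ is a product of an element of $C_b(X,\K)$ with an element of $C_0(X,\K)$ and hence lies in $C_0(X,\K)$; thus $a \in C_0(X,\K)$. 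Pointwise, $a(x) = (1-p(x))\,b(x)\,(1-p(x)) \geq 0$, so $a \in C_0(X,\K)^+$ and its pointwise range projection is a well-defined member of $\RP(X)$. It remains only to identify this range projection with $q-p$.

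Next I would carry out the pointwise computation, fixing $x$ and writing $r := q(x) - p(x)$, a projection since $p(x) \leq q(x)$. From $p(x) \leq q(x)$ we get $p(x)q(x) = p(x)$, and from the fact that $q(x)$ is the range projection of $b(x)$ we get $q(x)b(x) = b(x)q(x) = b(x)$ and $\ker b(x) = \mathrm{ran}(1 - q(x))$. Substituting these identities shows
\[ (1-p(x))\,b(x)\,(1-p(x)) = r\,b(x)\,r, \]
which is a positive operator supported on $\mathrm{ran}(r)$; hence its range projection is dominated by $r$.

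The only step requiring an argument rather than algebra is showing that this range projection is \emph{all} of $r$, i.e.\ that $r\,b(x)\,r$ has trivial kernel inside $\mathrm{ran}(r)$. I would argue by faithfulness of $b(x)$: if $\xi \in \mathrm{ran}(r)$ satisfies $r\,b(x)\,r\,\xi = 0$, then pairing with $\xi$ and using $r\xi = \xi$ gives $\langle b(x)\xi, \xi\rangle = 0$, so $b(x)^{1/2}\xi = 0$ and $\xi \in \ker b(x) = \mathrm{ran}(1 - q(x))$; but $\xi \in \mathrm{ran}(r) \subseteq \mathrm{ran}(q(x))$, forcing $\xi = 0$. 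Thus the range projection of $a(x)$ is exactly $q(x) - p(x)$ for every $x$, and $q - p \in \RP(X)$. I expect no serious obstacle here: the hypothesis that $p$ is continuous (not merely a pointwise range projection) is precisely what keeps $a$ inside $C_0(X,\K)$, while the positivity of $b$ supplies the faithfulness needed to prevent the compression by $1 - p$ from collapsing any part of $\mathrm{ran}(q(x)) \ominus \mathrm{ran}(p(x))$.
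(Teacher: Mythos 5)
Your proof is correct, and it produces the same witnessing element as the paper---the compression of a positive lift of $q$ by $1-p$---but it verifies that this element has range projection $q-p$ by a genuinely different route. The paper works with the lift $a$ of $q$ and introduces the auxiliary element $b' = pap + (1-p)a(1-p)$; it shows $qb'q = b'$ and $a \leq 2b'$ to sandwich the range projection of $b'$ and conclude it equals $q$, then splits that projection using orthogonality of the two summands, asserting as ``clear'' that $\chii_{(0,\infty)}(pap) = p$. You avoid the sandwich entirely: using the identity $(1-p)q = q - p$ (valid because $p \leq q$ forces $pq = qp = p$) you rewrite the compression as $(q-p)\,b\,(q-p)$ and then run a pointwise faithfulness argument---$\langle b(x)\xi,\xi\rangle = 0$ forces $\xi \in \ker b(x) = \mathrm{ran}(1-q(x))$---to see that the range projection is all of $q-p$. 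It is worth noting that your faithfulness argument is exactly what justifies the step the paper labels ``clear'' (the claim $\chii_{(0,\infty)}(pap) = p$ is the same statement with $p$ in place of $q-p$), so your route makes explicit what the paper elides, and is arguably shorter since it needs only one operator identity rather than two inequalities plus an orthogonality splitting. What the paper's version buys in exchange is that its argument is purely algebraic---operator inequalities and functional calculus, never fixing a point or inspecting a kernel---which is why it transfers verbatim to the noncommutative setting from which it is borrowed (cf.\ \cite[Lemma 2.2]{PereraToms:recasting}), whereas your argument leans on the concrete realization of the fibres as operators on a Hilbert space.
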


\begin{proof}
Let $a \in C_0(X,\K)^+$ such that $q(x) = \chii_{(0,\infty)}(a(x))$ for all $x \in X$.
Denoting by $1$ the function $X \to \mathcal{B}(\ltwo(\mathbb{N}))$ which is constantly the unit, consider the element
$b=pap + (1-p)a(1-p) \in C(X,\K)^+$. One easily verifies that $qbq=b$, whence
$\chii_{(0,\infty)}(b) \leq q$. On the other hand,
\[ a \leq a + (2p-1)a(2p-1) = 2(pap + (1-p)a(1-p))=2b, \]
and so $q = \chii_{(0,\infty)}(a)\leq \chii_{(0,\infty)}(b)$.
Hence,
\begin{align*} q = \chii_{(0,\infty)}(b) &=\chii_{(0,\infty)}\big(pap + (1-p)a(1-p)\big)\\
&= \chii_{(0,\infty)}(pap) + \chii_{(0,\infty)}\big((1-p)a(1-p)\big),
\end{align*}
because the elements $pap$ and $(1-p)a(1-p)$ are orthogonal.
Since $p \leq q$, it is clear that $\chii_{(0,\infty)}(pap) = p$.
It follows that
\[ q - p = \chii_{(0,\infty)}((1-p)a(1-p)), \]
as required.
\end{proof}

\section{The embedding of a Hilbert module into one of sufficiently larger dimension\alabel{Sec-DimFinite-Embedding}}
In this section we extend to countably generated Hilbert $C_0(X)$-modules the well-known fact that a vector bundle always embeds into another one with dimension at least $(\dim X-1)/2$ larger than that of the first one.
A partial generalization can be found in \cite[Proposition 7]{Dupre:ClassII}.

Our proof rests on a repeated application of the
result for vector bundles. It is essential, however, that the result for vector bundles be stated in
a relativized form, as in \cite[Proposition 1]{Dupre:ClassII} or in \cite[Proposition 4.2]{Phillips:rsh}, in the sense that when given an embedding of the bundles restricted to a closed subset of $X$, it provides an extension of the embedding. The result that we obtain for Hilbert C$^*$-modules---Theorem \ref{DimFinite-Embedding}---is again relativized in the same sense. This allows us to extend the result even further, to Hilbert $A$-modules where $A$ is a recursive subhomogeneous algebra. In doing this, we follow the line of reasoning used by Phillips in \cite[Theorem 4.5]{Phillips:rsh}, and by Toms in
\cite[Theorem 4.6]{Toms:comparison}, where analogous results are obtained for projections in the case of Phillips, and for Cuntz comparison of Hilbert modules in Toms's case (Toms uses the language of positive elements rather than Hilbert modules).

We shall now restate the embedding result for vector bundles, in the language of projections.

\begin{lemma}\alabel{Phillips-embedding}
Let $X$ be a finite dimensional $\sigma$-compact Hausdorff space with and let $Y \subseteq X$ be a closed subset.
Let $p,q\colon X \to \K$ be continuous projections such that for all $x \in X$,
\begin{equation}
\rank q(x) \geq \rank p(x) + \frac{\dim X - 1}{2}. \alabel{Phillips-RankDifference}
\end{equation}
Let $v\colon Y \to \K$ be a continuous partial isometry such that $v^*v = p|_Y$ and $vv^* \leq q|_Y$.
Then there exists a continuous partial isometry $\tilde{v}\colon X \to \K$ such that $\tilde{v}|_Y = v$, $\tilde{v}^*\tilde{v} = p$, and $\tilde{v}\tilde{v}^* \leq q$.
\end{lemma}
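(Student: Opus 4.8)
The plan is to recognize the statement as the classical vector-bundle embedding theorem phrased in the language of projections, and to deduce it from the relativized form of that theorem given in \cite[Proposition 1]{Dupre:ClassII} or \cite[Proposition 4.2]{Phillips:rsh}. The essential dictionary is that a continuous projection $r \in C_b(X,\K)$ of locally constant finite rank determines a locally trivial vector subbundle $E_r \subseteq X \times \ltwo(\mathbb{N})$ whose fibre at $x$ is the range of $r(x)$, and that a continuous partial isometry $w$ with $w^*w = r$ and $ww^* \leq r'$ is precisely a fibrewise isometric bundle map $E_r \to E_{r'}$. Under this dictionary $v$ becomes a fibrewise isometric embedding of $E_p|_Y$ into $E_q|_Y$, and the conclusion asks for an extension to such an embedding $E_p \to E_q$ over all of $X$.

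First I would reduce to the constant-rank case. Since $p$ and $q$ take values in $\K$, the functions $\rank p$ and $\rank q$ are continuous and $\mathbb{N}$-valued, hence locally constant; therefore the sets $X_{m,n} := \{x \in X \mid \rank p(x) = m,\ \rank q(x) = n\}$ partition $X$ into clopen subsets, exhibiting $X$ as the topological disjoint union of the $X_{m,n}$. A partial isometry $\tilde v$ on $X$ is continuous if and only if each restriction $\tilde v|_{X_{m,n}}$ is continuous, so it suffices to solve the extension problem separately on each (open) piece and take the union; the relations $\tilde v|_Y = v$, $\tilde v^*\tilde v = p$, and $\tilde v\tilde v^* \leq q$ then hold globally because they hold piecewise. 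Each $X_{m,n}$ is clopen, hence $\sigma$-compact and of covering dimension at most $\dim X$, so the hypothesis \eqref{Phillips-RankDifference}---which, since ranks are integers, reads $n \geq m + \lceil (\dim X - 1)/2 \rceil$---is inherited on each piece.

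On a fixed piece $X' = X_{m,n}$ the projections $p$ and $q$ give genuine vector bundles $E_p, E_q \subseteq X' \times \ltwo(\mathbb{N})$ of constant ranks $m$ and $n$, and $v$ restricts to a fibrewise isometric embedding of $E_p|_{Y \cap X'}$ into $E_q|_{Y \cap X'}$ over the closed set $Y \cap X'$. The dimension inequality is exactly the condition under which the relativized vector-bundle embedding theorem produces an extension to a fibrewise isometric bundle map $E_p \to E_q$ over $X'$; here I would invoke \cite[Proposition 1]{Dupre:ClassII} (or \cite[Proposition 4.2]{Phillips:rsh}), observing that Dupr\'e's statement is phrased directly for Hilbert bundles, so the infinite-dimensional ambient space $\ltwo(\mathbb{N})$ causes no difficulty (alternatively, over the finite-dimensional paracompact $X'$ the finite-rank bundles embed into a trivial $\mathbb{C}^N$-bundle, reducing to the finite-dimensional statement). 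Translating the resulting bundle map back through the dictionary yields the required $\tilde v$ on $X'$.

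The genuine content lies entirely in the cited vector-bundle theorem, whose proof is a general-position/obstruction argument: the extension exists because the bundle of fibrewise isometric embeddings of $E_p$ into $E_q$ has fibre (a complex Stiefel-type manifold) that is sufficiently highly connected relative to $\dim X$ precisely when $n - m \geq (\dim X - 1)/2$. Since this is a result I may assume, the only work in the present proof is bookkeeping, and the point requiring care is the reduction to constant rank together with checking that the translation between partial isometries and bundle maps is natural enough that the boundary condition on $Y$ and the relations $\tilde v^*\tilde v = p$, $\tilde v\tilde v^* \leq q$ transfer faithfully in both directions.
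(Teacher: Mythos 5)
Your dictionary between partial isometries and fibrewise isometric bundle maps, and your reduction to constant rank via the clopen partition of $X$ into the sets $X_{m,n}$ (valid, since a norm-continuous projection-valued map into $\K$ has locally constant rank), are both sound. They play the same role as the paper's first reduction, which instead uses that over a compact space every projection in $C(X,\K)$ is Murray--von Neumann equivalent to one in a matrix algebra $C(X,\M_n)$. The obstruction-theoretic reason you give for the rank inequality (the $2(n-m)$-connectivity of the complex Stiefel manifold, giving extendability when $\dim X \leq 2(n-m)+1$) is also the correct one.

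The gap is compactness of the base, which your proposal never addresses. The pieces $X_{m,n}$ are $\sigma$-compact but in general not compact, and \cite[Proposition 4.2]{Phillips:rsh} --- which you offer as interchangeable with Dupr\'{e}'s result --- is stated only for compact $X$. Your parenthetical fallback (embed the finite-rank bundles into a trivial $\mathbb{C}^N$-bundle and ``reduce to the finite-dimensional statement'') cures only the infinite dimensionality of the ambient fibre $\ltwo(\mathbb{N})$, not the non-compactness of the base: after that reduction you still face an extension problem over a non-compact space, to which the compact statement does not apply as it stands. This is exactly where the paper does its real work: it exhausts $X$ by an increasing sequence of compact sets $X_1 \subseteq X_2 \subseteq \cdots$, and, having defined $\tilde{v}$ on $Y \cup X_i$, applies the relativized compact case on the space $X_{i+1}$ with closed subset $(Y \cup X_i) \cap X_{i+1}$ to extend $\tilde{v}$ to $Y \cup X_{i+1}$; continuity of the resulting $\tilde{v}$ holds because it is continuous on each of the closed sets being glued. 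Your argument is complete only if \cite[Proposition 1]{Dupre:ClassII} is verified to hold over non-compact (paracompact, or $\sigma$-compact locally compact) bases --- a hypothesis you assert implicitly (and justify only against the wrong issue, namely fibre dimension) but never check. Absent that verification, you must add an exhaustion argument of the above kind, and note that it is precisely the relativized (closed-subset) form of the compact statement that makes such an induction possible.
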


\begin{proof}
In \cite[Proposition 4.2]{Phillips:rsh}, this lemma appears under the hypotheses that $X$ is compact and
the projections $p$ and $q$ belong to a matrix algebra over $C(X)$.
Let us explain how to reduce the current version of the lemma to \cite[Proposition 4.2]{Phillips:rsh}.

Suppose first that $X$ is compact. Using that any projection in $C(X, \K)$ is Murray-von Neumann equivalent to a projection in $C(X, \M_n)$ for some $n$, the lemma is easily reduced to the
case when $p,q\in C(X,\M_n)$ for some $n$. This is then \cite[Proposition 4.2]{Phillips:rsh}.

Let us consider now the case when $X$ is $\sigma$-compact. Let $(X_n)_{n=1}^\infty$ be an increasing sequence of compact subsets of $X$, such that $X = \bigcup X_n$. For simplicity, allow $X_1 \subseteq Y$.
We will define $\tilde{v}$ on successively larger domains $Y \cup X_1, Y \cup X_2, \dots$.
In this manner, $\tilde{v}(x)$ is eventually defined for each $x \in X$.

On $Y \cup X_1 = Y$, we must set $\tilde{v} = v$.
Having defined $\tilde{v}$ on $Y \cup X_i$, we apply the case of the lemma established previously---where
the total space $X$ was compact---to extend $\tilde{v}|_{(X_i \cup Y) \cap X_{i+1}}$ to a continuous partial isometry on $X_{i+1}$. We have thus defined $\tilde{v}$ on $X_{i+1} \cup Y$.
Since both $X_{i+1}$ and $Y$ are closed, and $\tilde{v}$ is continuous when restricted to either of them, we see that $\tilde{v}$ is continuous on their union, so that the induction is complete.
\end{proof}

\begin{thm}\alabel{DimFinite-Embedding}
Let $X$ be a finite dimensional locally compact Hausdorff space and let $Y \subseteq X$ be a closed subset.
Let $M,N$ be countably generated Hilbert $C_0(X)$-modules such that, for all $x \in X \setminus Y$,
\[ \dim N|_x \geq \dim M|_x + \frac{\dim X - 1}{2}, \]
where $\infty \geq \infty$ is allowed.
Let $\phi\colon M|_Y \to N|_Y$ be an embedding of Hilbert $C_0(Y)$-modules.
Then there exists $\tilde{\phi}\colon M \to N$, an embedding of Hilbert modules, such that $\tilde{\phi}|_Y = \phi$.
\end{thm}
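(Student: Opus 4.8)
The plan is to translate the statement into the language of range projections and partial isometries and then to build the extending partial isometry one rank at a time, feeding each stage into Lemma \ref{Phillips-embedding}. Write $M = M_p$ and $N = M_q$ for range projections $p,q \in \RP(X)$, so that the given embedding $\phi$ corresponds to a partial isometry $w \in \PIPD(Y)$ with $w^*w = p|_Y$ and $ww^* \leq q|_Y$. It then suffices to produce $v \in \PIPD(X)$ with $v^*v = p$, $vv^* \leq q$, and $v|_Y = w$, for then $\tilde\phi := \phi_v$ is the desired embedding. By Proposition \ref{ROF-Characterization}(ii), producing $v$ amounts to producing a rank-ordered family of partial isometries $(v_i, A_i)$ over a rank-ordered family $(p_i, A_i)$ for $p$, with $v_i^*v_i = p_i$, which restricts to $w$ over $Y$.

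First I would fix the combinatorial data, using both families. By Remark \ref{ROF-HalfOpen} choose a rank-ordered family $(p_i, A_i)$ for $p$ with each $A_i$ relatively closed in $R_{\geq i}(p)$, and, applying Lemma \ref{ROF-ProjExtension} to a rank-ordered family for $p|_Y$ extracted from $w$, arrange that it restricts over $Y$ to a rank-ordered family for $w$; this provides continuous rank-$i$ partial isometries $w_i$ on $A_i \cap Y$ with $w_i^*w_i = p_i|_{A_i \cap Y}$ and $w = \bigvee w_i$. Also choose a rank-ordered family $(q_k, D_k)$ for $q$, whose members $q_k$ are continuous rank-$k$ projections with $q_k \leq q$, nested on overlaps. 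Put $n_i := \lceil i + \tfrac{\dim X - 1}{2}\rceil$; since $\rank p \geq i$ on $A_i$, the hypothesis forces $\rank q \geq n_i$ on $A_i \setminus Y$, so that $A_i \setminus Y \subseteq \bigcup_{k \geq n_i} D_k$, and the projections $q_k$ with $k \geq n_i$ are continuous sub-projections of $q$ of rank exceeding $\rank p_i$ by at least $\tfrac{\dim X-1}{2}$, available as targets over $A_i \setminus Y$.

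The construction then proceeds by induction on $i$, building the $v_i$ so that the partial sums $V_i := \bigvee_{j \leq i} v_j$ form a coherent rank-ordered family extending $w$. Passing from $V_{i-1}$ to $V_i$ requires extending, over $A_i$, the partial isometry already prescribed on the closed set $C_i := A_i \cap (Y \cup \bigcup_{j<i} A_j)$: there it equals $w_i$ on $A_i \cap Y$ (with full source $p_i$) and equals $V_{i-1}$ on the overlaps (with source $\bigvee_{j<i} p_j \leq p_i$, generally of smaller rank). Over those overlaps — which lie off $Y$, where $\rank q \geq n_i \geq i$ — there is enough room inside $q$ to first complete the prescribed partial isometry to one with full source $p_i$; this reduces the extension to the situation of Lemma \ref{Phillips-embedding}, applied with source $p_i$ and, as target, a continuous rank-$k$ sub-projection $q_k \leq q$ (with $k \geq n_i$) drawn from the family for $q$ and covering $A_i \setminus Y$ stratum by stratum, the nesting $q_k \leq q_{k'}$ making successive extensions agree. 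The resulting $v_i$ satisfies $v_iv_i^* \leq q_k \leq q$ off $Y$, while on $A_i \cap Y$ it reproduces $w_i$, so that $v_iv_i^* = w_iw_i^* \leq q$ there; this is precisely why a rank gap assumed only off $Y$ suffices, since the extension automatically inherits the given range over $Y$.

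Having built all $v_i$ compatibly, I set $v := \bigvee v_i$ via Proposition \ref{ROF-Characterization}(ii); then $v^*v = p$, $vv^* \leq q$, and $v|_Y = w$, and $\tilde\phi := \phi_v$ is the required extension (points lying in infinitely many $A_i$, where $\dim M|_x = \infty$, are handled in the strong-operator limit, using that there $\dim N|_x = \infty$ as well). The \emph{main obstacle} is the bookkeeping that keeps everything compatible: the $v_i$ must satisfy the rank-ordered compatibility condition \ref{ROF-PIso-Compatibility} in order to glue, which forces each extension to be performed relative to, and consistently with, the lower-rank data on the overlaps and the prescribed $w$ over $Y$. Coordinating the two rank-ordered families — shrinking the sets $A_i$ and $D_k$ to neighborhoods of the relevant rank strata (Lemma \ref{ShrinkingLemma}), completing the deficient-source data over the overlaps, and arranging the target sub-projections of $q$ to dominate the ranges chosen at earlier stages (Lemma \ref{ROF-ProjExtension2} and Proposition \ref{ComplementedMultiplierProjs}) — is where the real work lies; finite-dimensionality of $X$ enters solely through the relativized bundle extension of Lemma \ref{Phillips-embedding}.
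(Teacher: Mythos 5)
Your proposal is correct and follows essentially the same route as the paper's proof: translate the statement into range projections and partial isometries, stratify by rank via rank-ordered families, and build the extending partial isometry stratum by stratum over successively larger closed domains, with Lemma \ref{Phillips-embedding} as the atomic extension step and Proposition \ref{ComplementedMultiplierProjs} together with the rank-ordered-family extension lemmas handling the compatibility of the target's constant-rank pieces with the ranges already prescribed. The only difference is organizational: the paper isolates the intermediate case (constant-rank source, arbitrary target) as a standalone step before running the induction on the source strata, whereas you inline that argument—including the adaptive re-choice of the family for $q$ so that its pieces dominate previously chosen ranges—into the main induction.
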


\begin{proof}
We may assume that $M=M_p$ and $N=M_q$ for some projections $p,q\in \RP(X)$. Then $\phi=\phi_v$, for some
$v\in \PIPD(Y)$. In terms of $p$, $q$, and $v$, we must show that if $\rank p+(\dim X-1)/2\leq \rank q$, and $p|_Y=v^*v$, $vv^*\leq q$, then there is $\tilde v\in \PIPD(X)$ such that $\tilde{v}|_Y = v$, $p=\tilde{v}^*\tilde{v}$ and $\tilde{v}\tilde{v}^* \leq q$. We will first prove this under the additional assumption that the projections $p$ and $q$ have finite, constant rank; then we will drop these assumptions, first on $q$ and then on $p$.

\proofparts
\pcase \alabel{DimFinite-Embedding-pqConstRank}
Let us assume that $p$ and $q$ each have finite, constant rank.
The result is trivial unless the rank of $q$ is at least $1$ (of course, unless $\dim X \leq 1$, this is forced by the rank comparison condition).
In this case $X$ must be $\sigma$-compact, and so this case follows from Lemma \ref{Phillips-embedding}.

\pcase \alabel{DimFinite-Embedding-pConstRank}
Next, let us prove the result under the assumption that $p$ has finite, constant rank, but allowing $q$ to be an arbitrary pointwise range projection.
Since $vv^* \leq q|_Y$, Proposition \ref{ComplementedMultiplierProjs} says that $q|_Y - vv^*$ is a pointwise range projection and thus has a rank-ordered family $(r_i, B_i)_{i=0}^\infty$.
By adding these projections to $vv^*$, we obtain a rank-ordered family of projections $(q_i, B_i)_{i=0}^\infty$ for $q|_Y$, such that $q_i \geq vv^*$ for each $i$.
Moreover, by applying Lemma \ref{ROF-ProjExtension}, we may extend the $q_i$'s; the result is a rank-ordered family of projections $(q_i,A_i)_{i=0}^\infty$ for $q$, with the property that
\[ vv^*|_{A_i \cap Y} \leq q_i|_{A_i \cap Y} \]
for each $i$.
Moreover, by Lemma \ref{ShrinkingLemma}, we may assume (by possibly shrinking the sets $A_i$) that $A_i$ is relatively closed in $R_{\geq i}(q)$, for each $i$.

We will define $\tilde{v}$ on successively larger domains, beginning with $Y$ (where $\tilde{v}$ is equal to $v$), and adding on sets $A_i$ with $i$ in increasing order.
We will require that $\tilde{v}^*\tilde{v} = p$, and on the set $A_i$, $\tilde{v}\tilde{v}^* \leq q_i$.
We will only need to use $i \geq \rank p + \lceil \frac{\dim X - 1}{2} \rceil =: i_0$, since $X \setminus Y$ is covered by the sets $A_i$ with such $i$.

Having defined $\tilde{v}$ on $Y' = Y \cup A_{i_0} \cup \cdots \cup A_{i-1}$, let us extend the definition to include the set $A_i$.
Note that $Y' \cap A_i$ is relatively closed in $A_i$.
Let us check that $\tilde{v}\tilde{v}^*|_{Y' \cap A_i} \leq q_i$.
On $Y \cap A_i$, we have $\tilde{v} = v$, so that $\tilde{v}\tilde{v}^* \leq q_i$.
On $A_j \cap A_i$ ($j = i_0, \dots, i-1$), we have $\tilde{v}\tilde{v}^* \leq q_j \leq q_i$.
So, by applying the result we proved in Case \ref{DimFinite-Embedding-pqConstRank}, we may continuously extend $\tilde{v}$ to $A_i$, such that $\tilde{v}\tilde{v}^*|_{A_i} \leq p|_{A_i}$ and $\tilde{v}\tilde{v}^*|_{A_i} \leq q_i$.

By completing the induction, we obtain $\tilde{v}$ defined on all of $X$ and satisfying the conclusion.

\pcase \alabel{DimFinite-Embedding-NoRestriction}
Finally, let us also remove the assumption that the rank of $p$ is constant, and prove the lemma in full generality.
Let us take a rank-ordered family of projections $(p_i,A_i)_{i=0}^\infty$ for $p$, such that each set $A_i$ is relatively closed in $R_{\geq i}(p)$.
For simplicity, let us assume that $A_0 = X$.
By Proposition \ref{ROF-Characterization} (ii), we have a rank-ordered family $(v_i, A_i \cap Y)_{i=0}^\infty$ for $v$, such that $v_i^*v_i = p_i|_{A_i \cap Y}$ for each $i$.

To obtain $v$ satisfying the conclusion, we will obtain a rank-ordered family $(\tilde{v}_i, A_i)_{i=0}^\infty$ which satisfy the following:

\indent(i) $\tilde{v}_i|_{A_i \cap Y} = v_i$,

\indent(ii) $\tilde{v}_i^*\tilde{v}_i = p_i$, and

\indent(iii) $\tilde{v}_i\tilde{v}_i^* \leq q|_{A_i}$. \\
Also, implicit in the requirement that $(\tilde{v}_i, A_i)_{i=0}^n$ is a rank-ordered family is the condition that $\tilde{v}_j|_{A_i \cap A_j}$ is compatible with $\tilde{v}_i|_{A_i \cap A_j}$ for $i \leq j$ (as in Definition \ref{ROF-Defn} \ref{ROF-PIso-Compatibility}.
We will obtain the $\tilde{v}_i$'s in increasing order of $i$.

Set $\tilde{v}_0=0$.
Having defined $\tilde{v}_0, \dots, \tilde{v}_{i-1}$, we will proceed to define $\tilde{v}_i$.
Our method will once again be to define $\tilde{v}_i$ on successively larger domains, beginning with $A_i \cap Y$ (where $\tilde{v}_i$ is equal to $v_i$), then adding sets $A_i \cap A_j$ as $j$ decreases.
When we have added all such sets, we will have defined $\tilde{v}_i$ on all of $A_i$ (since $A_0 = X$).

Having defined $\tilde{v}_i$ on $Y' = A_i \cap (Y \cup A_{i-1} \cup \cdots \cup A_{j+1})$, let us now define it on $A_i \cap A_j$.
Notice that $Y'$ is relatively closed in $A_i$, so that $Y' \cap (A_i \cap A_j)$ is relatively closed in $A_i \cap A_j$.
We can easily see that $\tilde{v}_i$ is compatible with $\tilde{v}_j$ on $Y' \cap (A_i \cap A_j)$, so that $\tilde{v}_i - \tilde{v}_j$ is a partial isometry defined on $Y' \cap (A_i \cap A_j)$, taking the constant-rank projection $(p_i - p_j)|_{Y' \cap (A_i \cap A_j)}$ to a subprojection of $(q - \tilde{v}_j\tilde{v}_j^*)|_{Y' \cap (A_i \cap A_j)}$.
Since $Y'$ contains $A_i \cap Y$, we have for all $x \in (A_i \cap A_j) \setminus Y'$ that
\[
\rank (q - \tilde{v}_j\tilde{v}_j^*)(x) \geq \rank (p_i - p_j) + \frac{\dim X - 1}{2}.
\]
Hence, we may apply the result proven in Case \ref{DimFinite-Embedding-pConstRank} to obtain a continuous partial isometry $w$ defined on $A_i \cap A_j$ which takes $p_i - p_j$ to a subprojection of $q - \tilde{v}_j\tilde{v}_j^*$, and which agrees on $Y' \cap (A_i \cap A_j)$ with $\tilde{v}_i - \tilde{v}_j$.
Thus, if we let $\tilde{v}_i|_{A_i \cap A_j} = \tilde{v}_j + w$, then this definition agrees on $Y' \cap (A_i \cap A_j)$ with the previous one and satisfies the requirements stated above.

Therefore, $\tilde{v}_i$ may  be defined on all of $A_i$ in a manner compatible with $\tilde{v}_j$ for $j \leq i$ (i.e. so that Definition \ref{ROF-Defn} \eqref{ROF-PIso-Compatibility} holds).
Upon defining $\tilde{v}_i$ for all $i$, we obtain the partial isometry $\tilde{v}$ by Proposition \ref{ROF-Characterization} (i), as required.
\end{proof}

\begin{cor}\alabel{DimFinite-Embedding-PositiveElements}
Let $X$ be a finite dimensional locally compact Hausdorff space and let $Y \subseteq X$ be a closed subset.
Let $a,b \in C_0(X,\K)^+$ such that for all $x \in X \setminus Y$,
\[ \rank b(x) \geq \rank a(x) + \frac{\dim X - 1}{2}, \]
where $\infty \geq \infty$ is allowed.
Let $s \in C_0(Y,\K)$ such that $s^*s = a|_Y$ and $ss^* \in \hered{b|_Y}$.
Then there exists $\tilde{s} \in C_0(X,\K)$ such that $\tilde{s}|_Y = s$, $\tilde{s}^*\tilde{s} = a$ and $\tilde{s}\tilde{s}^* \in \hered{b}$.
\end{cor}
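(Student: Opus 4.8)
The statement is a translation of Theorem \ref{DimFinite-Embedding} through the dictionary set up in Section \ref{DimFinite}, and this is how I would prove it. Let $p,q \in \RP(X)$ be the pointwise range projections of $a$ and $b$; since $\rank p(x) = \rank a(x)$ and $\rank q(x) = \rank b(x)$, the rank hypothesis is exactly the one required by the theorem. Let $v \in \PIPD(Y)$ be the partial isometry arising from the pointwise polar decomposition of $s$. From $s^*s = a|_Y$ we get that the pointwise range projection of $s^*s$ is $v^*v = p|_Y$, while $vv^*$ is the pointwise range projection of $ss^*$. The hypothesis $ss^* \in \hered{b|_Y}$ says precisely that the range projection of $ss^*$ is dominated by that of $b|_Y$, that is, $vv^* \leq q|_Y$. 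Thus $p,q,v$ satisfy all the hypotheses of the reduction carried out in the proof of Theorem \ref{DimFinite-Embedding}, which produces $\tilde v \in \PIPD(X)$ with $\tilde v|_Y = v$, $\tilde v^*\tilde v = p$ and $\tilde v \tilde v^* \leq q$.

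It then remains to convert $\tilde v$ back into an element $\tilde s$. The plan is to set $\tilde s := \tilde v a^{1/2}$ and check the three conclusions pointwise. Since $p$ is the range projection of $a$ we have $p a^{1/2} = a^{1/2}$, so $\tilde s^* \tilde s = a^{1/2} \tilde v^*\tilde v a^{1/2} = a^{1/2} p a^{1/2} = a$. As $\tilde v^*\tilde v = p$ dominates the range projection of $a$, the positive element $\tilde s\tilde s^* = \tilde v a \tilde v^*$ has range projection $\tilde v p \tilde v^* = \tilde v\tilde v^* \leq q$, so that $\tilde s\tilde s^* \in \hered{b}$. Finally, restricting to $Y$ and using $\tilde v|_Y = v$ gives $\tilde s|_Y = v (a|_Y)^{1/2} = v|s| = s$.

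The one point that is not purely formal --- and which I expect to be the main obstacle --- is to verify that $\tilde s = \tilde v a^{1/2}$ genuinely lies in $C_0(X,\K)$, since $\tilde v$ is only a pointwise (strong-operator) partial isometry and need not be norm-continuous. This is where the source element of $\tilde v$ must be used: by definition $\tilde v$ is the polar part of some $s_0 \in C_0(X,\K)$ with $\tilde v^*\tilde v = p$ equal to the pointwise range projection of $s_0^*s_0$. Hence $a$ and $s_0^*s_0$ have the same pointwise range projection, so $a^{1/2}$ lies in the hereditary subalgebra $\hered{|s_0|}$ of $C_0(X,\K)$. Writing $a^{1/2} = \lim_n |s_0| c_n |s_0|$ in norm, with $c_n \in C_0(X,\K)$, gives $\tilde s = \tilde v a^{1/2} = \lim_n s_0 c_n |s_0|$, a norm limit of elements of $C_0(X,\K)$, so that $\tilde s \in C_0(X,\K)$ as needed. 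This is the same correspondence between $\PIPD(X)$ --- together with a source positive element --- and elements of $C_0(X,\K)$ that underlies the identity $\phi_s = \phi_v$ of the preliminaries, and in the write-up it can simply be invoked from Section \ref{DimFinite} rather than reproved.
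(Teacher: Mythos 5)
Your proposal is correct and follows essentially the same route as the paper: pass to the pointwise range projections $p,q$ and the polar part $v$ of $s$, invoke the projection-level form of Theorem \ref{DimFinite-Embedding} to get $\tilde{v}$, and set $\tilde{s}=\tilde{v}a^{1/2}$. The paper states this in three lines and leaves the verifications implicit; your additional check that $\tilde{v}a^{1/2}$ actually lies in $C_0(X,\K)$ (via the source element of $\tilde{v}$ and the hereditary subalgebra $\hered{|s_0|}$) is a worthwhile detail that the paper glosses over.
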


\begin{proof}
Let $p = \chii_{(0,\infty)}(a), q = \chii_{(0,\infty)}(b)$, and let $s = v|s|$ be the polar decomposition of $s$.
Applying Theorem \ref{DimFinite-Embedding} to these, we obtain $\tilde{v}$ such that $\tilde{v}|_Y = v$, $\tilde{v}^*\tilde{v} = p$, and $\tilde{v}\tilde{v}^* \leq q$.
Then the conclusion holds by setting $\tilde{s} = va^{\frac{1}{2}}$.
\end{proof}

The direct application of Theorem \ref{DimFinite-Embedding} to Hilbert modules over commutative C$^*$-algebras is using the situation that $Y$ is empty---giving an automatic embedding of a Hilbert modules if there is sufficient difference in their dimensions.
The full force of Theorem \ref{DimFinite-Embedding} is used, however, to show the generalization of this result to  C$^*$-algebras with a recursive subhomogeneous decomposition by spaces of  finite topological dimension.
By \cite[Theorem 2.16]{Phillips:rsh}, in the separable case, these are the C$^*$-algebras for which there is a finite upper bound on the dimensions of irreducible representations and for which, for each $n$, the space of irreducible representations of dimension $n$ is finite dimensional.

Let $M$ be a countably generated Hilbert module over a C$^*$-algebra $R$.
Let $\pi\colon R\to \M_n$ be a finite dimensional irreducible representation
of $R$. We may consider the push forward $\pi^*(M):=M\otimes_\pi \M_n$ of $M$ by $\pi$. This is a
countably generated Hilbert module over $\M_n$. As such, it is isomorphic to a module of the form
\[ \overline{a\ell^2(\M_n),} \]
for some $a \in (\K \tens \M_n)^+ \iso \K^+$.
As a vector space over $\C$, such a module has dimension $n\cdot (\rank a)$.

For $n\in \mathbb{N}$, let us denote by $\mathrm{Prim}_n(R)$ the space of primitive ideals
of $R$ corresponding to irreducible representations of dimension $n$.
By $\mathrm{Prim}(R)$, we denote the space of all primitive ideals of $R$.

\begin{cor}\alabel{DimFinite-Embedding-RSH}
Let $R$ be a separable unital C$^*$-algebra. Suppose that there is $n_0\in \mathbb{N}$
such that all irreducible representations of $R$ have dimension at most $n_0$ and that
$\mathrm{Prim}(A)$ is finite dimensional.
Let $M$ and $N$ be countably generated Hilbert $R$-modules.
Suppose that for every $n$ and every irreducible representation
$\pi$ of $R$ of dimension $n$, we have
\begin{equation}\alabel{DimFinite-Embedding-RSH-RankDifference}
\frac{\dimC \pi^*(N)}{n}\geq \frac{\dimC \pi^*(M)}{n}+\frac{\dim \mathrm{Prim}_n(R)-1}{2},
\end{equation}
where $\infty \geq \infty$ is allowed. Then $M$ embeds into $N$.
\end{cor}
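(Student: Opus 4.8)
The plan is to reduce, via the structure theory of recursive subhomogeneous algebras, to a stage-by-stage application of the relativized embedding result already established, in its positive-element form Corollary~\ref{DimFinite-Embedding-PositiveElements}. By \cite[Theorem~2.16]{Phillips:rsh}, the hypotheses on $R$ provide a recursive subhomogeneous decomposition with finitely many stages: writing $R=R_m$ and $R_k=R_{k-1}\oplus_{C(X_k^{(0)},M_{n_k})}C(X_k,M_{n_k})$ for $k=1,\dots,m$, with base stage $R_0=C(X_0,M_{n_0})$, each $X_k$ is a compact metrizable space (as $R$ is separable and unital), $X_k^{(0)}\subseteq X_k$ is closed, and $X_k\setminus X_k^{(0)}$ is a subspace of $\mathrm{Prim}_{n_k}(R)$ (its points give the irreducible representations of dimension $n_k$ appearing at stage $k$). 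First I would record the reduction to elements: writing $M=\overline{a\ell^2(R)}$ and $N=\overline{b\ell^2(R)}$ with $a,b\in K(\ell^2(R))^+$, an embedding $M\hookrightarrow N$ is exactly an $s\in K(\ell^2(R))$ with $s^*s=a$ and $ss^*\in\hered{b}$. Since $K(\ell^2(R))\cong R\otimes\K$ and tensoring by $\K$ preserves the pullbacks defining $R$ (using $M_{n_k}\otimes\K\cong\K$), the elements $a,b$ are described by compatible families $(a_k)_{k=0}^m$, $(b_k)_{k=0}^m$ with $a_k,b_k\in C(X_k,\K)^+$, agreeing over $X_k^{(0)}$ with the push-forward of the earlier-stage data along the connecting map $\phi_k\colon R_{k-1}\to C(X_k^{(0)},M_{n_k})$.

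I would then build $s$ by constructing compatible $s_k\in C(X_k,\K)$ with $s_k^*s_k=a_k$ and $s_ks_k^*\in\hered{b_k}$, by induction on $k$. At the base stage I apply Corollary~\ref{DimFinite-Embedding-PositiveElements} on $X_0$ with $Y=\emptyset$. For the inductive step, suppose $s$ has been defined over $R_{k-1}$. Pushing it forward along $\phi_k$ yields an element $t\in C(X_k^{(0)},\K)$ with $t^*t=a_k|_{X_k^{(0)}}$ and $tt^*\in\hered{b_k|_{X_k^{(0)}}}$. Corollary~\ref{DimFinite-Embedding-PositiveElements}, applied with $X=X_k$ and $Y=X_k^{(0)}$, extends $t$ to $s_k\in C(X_k,\K)$ with $s_k^*s_k=a_k$, $s_ks_k^*\in\hered{b_k}$ and $s_k|_{X_k^{(0)}}=t$. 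The matching condition $s_k|_{X_k^{(0)}}=t$ is precisely what is required for the pair $(s_{<k},s_k)$ to define an element of $K(\ell^2(R_k))$ under the pullback, so the induction produces a globally defined $s\in K(\ell^2(R))$ witnessing $M\hookrightarrow N$. The analytic content of the argument is thus carried entirely by the relativized nature of Corollary~\ref{DimFinite-Embedding-PositiveElements} (which extends an embedding from a closed subset), used here to pass across the gluing boundary $X_k^{(0)}$ at each stage.

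The remaining ingredient, and the place where the real care is needed, is verifying the rank hypothesis of Corollary~\ref{DimFinite-Embedding-PositiveElements} at each stage. For $x\in X_k\setminus X_k^{(0)}$ the evaluation $\pi_x$ is an irreducible representation of dimension $n_k$, and under the identifications above $\rank a_k(x)=\dimC\pi_x^*(M)/n_k$ and $\rank b_k(x)=\dimC\pi_x^*(N)/n_k$, so hypothesis~\eqref{DimFinite-Embedding-RSH-RankDifference} reads $\rank b_k(x)\geq\rank a_k(x)+\tfrac{\dim\mathrm{Prim}_{n_k}(R)-1}{2}$ on $X_k\setminus X_k^{(0)}$. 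Corollary~\ref{DimFinite-Embedding-PositiveElements} over the total space $X_k$, however, demands the gap $\tfrac{\dim X_k-1}{2}$. Since $X_k\setminus X_k^{(0)}$ is a subspace of $\mathrm{Prim}_{n_k}(R)$, one has $\dim(X_k\setminus X_k^{(0)})\leq\dim\mathrm{Prim}_{n_k}(R)$, but $\dim X_k=\max\bigl(\dim(X_k\setminus X_k^{(0)}),\dim X_k^{(0)}\bigr)$ may a priori exceed this because of the gluing boundary $X_k^{(0)}$. I expect this dimension bookkeeping to be the main obstacle: it must be resolved either by choosing the decomposition so that $\dim X_k\leq\dim\mathrm{Prim}_{n_k}(R)$ (invoking dimension-preserving compactifications of the strata and keeping the attaching boundaries low-dimensional), or by observing that the extension in Corollary~\ref{DimFinite-Embedding-PositiveElements} is carried out only over $X_k\setminus X_k^{(0)}$ and is therefore governed by $\dim(X_k\setminus X_k^{(0)})$ rather than $\dim X_k$. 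With either refinement, $\tfrac{\dim\mathrm{Prim}_{n_k}(R)-1}{2}\geq\tfrac{\dim(X_k\setminus X_k^{(0)})-1}{2}$ supplies the required gap on $X_k\setminus X_k^{(0)}$, completing the inductive step and hence the embedding.
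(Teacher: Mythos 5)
Your proposal follows the same route as the paper's proof: reduce to positive elements with $M\cong\overline{a\ltwo(R)}$, $N\cong\overline{b\ltwo(R)}$, take the recursive subhomogeneous decomposition supplied by Phillips's Theorem 2.16, and induct on its stages, applying Corollary~\ref{DimFinite-Embedding-PositiveElements} with $Y=\emptyset$ at the base stage and with $Y=X_\ell^{(0)}$ at each later stage to extend across the clutching boundary, the pullback structure then assembling the stagewise solutions into a global $s$ with $s^*s=a$ and $ss^*\in\hered{b}$. The one point you leave open---whether $\dim X_\ell$ can exceed $\dim\mathrm{Prim}_{n(\ell)}(R)$---is settled in the paper exactly along your first suggested route: the decomposition furnished by Phillips's Theorem 2.16 can be chosen so that the stage spaces of matrix size $n$ are built from (dimension-preserving compactifications of) subsets of $\mathrm{Prim}_n(R)$, whence $\dim X_\ell\leq\dim\mathrm{Prim}_{n(\ell)}(R)$; this is precisely what lies behind the paper's terse remark that hypothesis~\eqref{DimFinite-Embedding-RSH-RankDifference} ``translates into'' the pointwise inequality $\rank b(x)\geq \rank a(x)+\frac{\dim X_\ell-1}{2}$, an inequality which, as you correctly observe, is only needed on $X_\ell\setminus X_\ell^{(0)}$, where evaluation gives irreducible representations of dimension $n(\ell)$. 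Your second suggested route, by contrast, is not available with the results as stated: Corollary~\ref{DimFinite-Embedding-PositiveElements}, resting on Lemma~\ref{Phillips-embedding}, demands a gap of $\frac{\dim X-1}{2}$ measured by the dimension of the \emph{whole} space even though the gap hypothesis is imposed only on $X\setminus Y$, and neither the paper nor the vector-bundle input it cites establishes a variant governed by $\dim(X\setminus Y)$. So, modulo citing this per-matrix-size dimension control from Phillips's theorem rather than leaving it as an alternative, your argument is the paper's argument.
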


\begin{proof}
By \cite[Theorem 2.16]{Phillips:rsh}, $R$ has a recursive subhomogeneous decomposition
by spaces of finite dimension. We defer the definition of recursive subhomogeneous decomposition
to \cite[Definition 1.1]{Phillips:rsh}, and we use the notation given in Definition 1.2 there.

Taking $M \iso M_a$ and $N \iso M_b$ for $a,b \in (\K \tens R)^+$, it must be shown that there exists $s \in \K \tens R$ such that
\[ a = s^*s \quad\text{and}\quad ss^* \in \hered{b}. \]
This can be done by induction on the length $l$ of the decomposition of $R$.
Note that the condition \eqref{DimFinite-Embedding-RSH-RankDifference} translates into
\[ \rank \sigma(b)(x) \geq \rank \sigma(a)(x) + \frac{\dim X_i - 1}{2}. \]

If $\ell = 0$ then $R = \M_n \tens C(X)$ for some $X$, and so the result follows directly from Corollary \ref{DimFinite-Embedding-PositiveElements} with $Y = \emptyset$.
For $\ell \geq 1$, $R$ is given by the pullback diagram
\[ \begin{array}{ccl}
R & \rightarrow & C_\ell = \M_{n(\ell)} \tens C(X_\ell) \\
\Big\downarrow && \Big\downarrow \scriptstyle{f \mapsto f|_{X_\ell^{(0)}}} \\
R^{(\ell - 1)} & \labelledrightarrow{\rho} & C_\ell^{(0)} = \M_{n(\ell)} \tens C(X_\ell^{(0)}),
\end{array} \]
for some unital clutching map $\rho$.
Set $a',b' \in \K \tens R^{(\ell - 1)}$ to be the images of $a,b$ in the stabilization of the $(\ell-1)$-stage algebra.
By induction, there exists $s' \in \K \tens R^{(\ell - 1)}$ such that
\[ a' = s^{\prime*}s' \quad\text{and}\quad s's^{\prime*} \in \hered{b'}. \]

Now, set $a^{\prime\prime}, b^{\prime\prime}$ to be the images of $a,b$ in $\K \tens C_\ell$.
We have
\begin{align*}
\rho(s')^*\rho(s') = \rho(a') = a^{\prime\prime}|_{X_\ell^{(0)}} \text{ and }
\rho(s')\rho(s')^* \in \hered{\rho(b')} = \hered{b^{\prime\prime}\big|_{X_\ell^{(0)}}}.
\end{align*}
By Corollary \ref{DimFinite-Embedding-PositiveElements}, we can extend $\rho(s')$ to an element $s^{\prime\prime} \in \K \tens C_\ell$, such that
\[ s^{\prime\prime*}s^{\prime\prime} = a^{\prime\prime} \quad\text{and}\quad s^{\prime\prime}s^{\prime\prime*} \in \hered{b^{\prime\prime}}. \]
Thus, we obtain $s := (s', s^{\prime\prime}) \in R$ satisfying $a = s^*s$ and $ss^* \in \hered{b}$, as required.
\end{proof}

A Hilbert $A$-module $M$ is said to be finitely generated if there is a finite subset of $M$ whose $A$-span is dense in $M$.

\begin{cor}\alabel{DimFinite-FinGenModules}
Let $R$ be a separable unital C$^*$-algebra. Suppose that there is $n_0\in \mathbb{N}$
such that all irreducible representations of $R$ have dimension at most $n_0$ and that
$\mathrm{Prim}(R)$ is finite dimensional.
For a countably generated Hilbert $R$-module $M$, the following are equivalent:

(i) $M$ is finitely generated, 

(ii) there is a uniform finite bound on $\dimC \pi^*(M)$ over all $\pi\in \mathrm{Prim}_n(R)$,
over all $n\leq n_0$. 

(iii) $M$ is isomorphic to a Hilbert submodule of $R^n$ for some $n\in \mathbb{N}$.
\end{cor}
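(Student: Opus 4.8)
The plan is to prove the cycle of implications (i)$\Rightarrow$(ii)$\Rightarrow$(iii)$\Rightarrow$(i). The only substantial tool is the embedding result of Corollary \ref{DimFinite-Embedding-RSH}, which does all the work in the step (ii)$\Rightarrow$(iii); the other two steps are bookkeeping with the push-forward $\pi^*$ and with the structure theory of Hilbert modules. For (i)$\Rightarrow$(ii), suppose $m_1,\dots,m_k$ generate $M$ over $R$, and let $\pi\colon R\to \M_n$ be an irreducible representation. Then $m_1\otimes 1,\dots,m_k\otimes 1$ generate $\pi^*(M)=M\otimes_\pi \M_n$ as a Hilbert $\M_n$-module: if $m=\sum_i m_i\lambda_i$ then $m\otimes a=\sum_i (m_i\otimes 1)\,\pi(\lambda_i)a$, and such vectors are dense. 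Hence $\pi^*(M)$ is a quotient of the free module $\M_n^{\,k}$, so $\dimC\pi^*(M)\le \dimC \M_n^{\,k}=kn^2\le kn_0^2$. As this bound is independent of $\pi$ and $n$, (ii) holds.

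For (ii)$\Rightarrow$(iii), let $C$ be a uniform bound for $\dimC\pi^*(M)$ and set $d:=\dim\mathrm{Prim}(R)<\infty$. Since $\pi^*(R)\iso \M_n$, we have $\dimC\pi^*(R^{N})/n=Nn$ for every $N$. Choose an integer $N\ge C+\tfrac{d-1}{2}$. I then verify the hypothesis \eqref{DimFinite-Embedding-RSH-RankDifference} of Corollary \ref{DimFinite-Embedding-RSH} for the pair $M, R^N$: for any $\pi\in\mathrm{Prim}_n(R)$ the left-hand side equals $Nn\ge N$, whereas the right-hand side is at most $C+\tfrac{d-1}{2}$, using that $\dimC\pi^*(M)/n\le C$ (as $n\ge 1$) and $\dim\mathrm{Prim}_n(R)\le d$. (For $n>n_0$ there are no such $\pi$, so the condition is vacuous there.) The corollary now yields an embedding $M\hookrightarrow R^N$, which is (iii). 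The point to watch is that the inequality must hold uniformly in $n$, which is precisely why it is convenient to divide through by $n$ and invoke $n\ge 1$.

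For (iii)$\Rightarrow$(i), we may assume $M$ is a closed submodule of $R^N$. Since $R^N$ is separable, so is $M$, and hence $M$ admits a countable generating sequence $\{\eta_j\}$, which we may take to be bounded in norm (rescale each $\eta_j$). The operator $a:=\sum_j 2^{-j}\theta_{\eta_j,\eta_j}$, where $\theta_{\eta,\eta}(\xi)=\eta\langle\eta,\xi\rangle$, then converges in $K(R^N)\iso \M_N(R)$ and has closed range equal to the closed submodule generated by the $\eta_j$, namely $M$; thus $M=\overline{aR^N}$. Because $a\in \M_N(R)$, we have $\overline{aR^N}=\overline{\sum_l (ae_l)R}$, where $e_1,\dots,e_N$ is the standard basis of $R^N$, so $M$ is generated by the $N$ elements $ae_1,\dots,ae_N$. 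This gives (i) and closes the cycle.

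I expect no serious obstacle: the difficult analysis is entirely absorbed into Corollary \ref{DimFinite-Embedding-RSH}. The only places demanding a little care are the identification of the generators of $\pi^*(M)$ in (i)$\Rightarrow$(ii) and the realization of a submodule of $R^N$ in the form $\overline{aR^N}$ with $a\in \M_N(R)^+$ in (iii)$\Rightarrow$(i); both are routine facts about the push-forward and about compact operators on finitely generated free modules. It is worth emphasizing that (iii)$\Rightarrow$(i) produces finite generation but not projectivity, which is the correct outcome since a finitely generated Hilbert module need not be a complemented submodule.
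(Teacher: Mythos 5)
Your proof is correct and follows essentially the same route as the paper: (i)$\Rightarrow$(ii) by pushing forward a finite generating set, (ii)$\Rightarrow$(iii) by applying Corollary \ref{DimFinite-Embedding-RSH} to embed $M$ into a free module of sufficiently large rank, and (iii)$\Rightarrow$(i) by exhibiting a strictly positive element of $K(M)\subseteq \M_N(R)$ whose columns generate $M$. The only cosmetic differences are that you build the strictly positive element explicitly (rather than citing $\sigma$-unitality of $K(M)$) and that you bound the ranks via $\dim \mathrm{Prim}_n(R)\leq \dim\mathrm{Prim}(R)$, whereas the paper simply takes a finite upper bound on $n\cdot(\dim\mathrm{Prim}_n(R)-1)/2$ over the finitely many $n\leq n_0$ --- a formulation that avoids invoking monotonicity of covering dimension for the (non-Hausdorff) space $\mathrm{Prim}(R)$.
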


\begin{proof}\proofnewline
(i)$\Rightarrow$(ii): If $M$ is finitely generated then $\pi^*(M)$ is finitely generated
(with the same number of generators) over $\M_{n}$. Thus, $\dim_{\C} \pi^*(M)$
is uniformly bounded over all $\pi$.

(ii)$\Rightarrow$ (iii): Let $k$ be an upper bound on $\dim_\C \pi^*(M)$ over all
$\pi\in \mathrm{Prim}_n (R)$, over all $n\leq n_0$.
Let $d$ be an upper bound on $n\cdot (\dim \mathrm{Prim}_n(R)-1)/2$ over all $n$. Then $M$ emebds into $R^{k+d}$ by Corollary \ref{DimFinite-Embedding-RSH}.

(iii) $\Rightarrow$ (i):
This is known to hold for the countably generated Hilbert $A$-modules over any C$^*$-algebra $A$; let us review the argument.
If $M$ is countably generated then $K(M)$ is $\sigma$-unital, so that it has a strictly positive element, $T$.
Since $M \subseteq R^n$, we have that $K(M) \subseteq \M_n \tens R$.
$M$ is generated by the columns of $T$.
\end{proof}
\section{Large gaps and \texorpdfstring{$\dim X\leq 3$}{dim X at most 3}\alabel{Dim3}}
The results of this section apply to two classes of Hilbert modules:
modules with large gaps in their dimension function and modules over a space
of dimension at most 3, where large gaps means gaps of at least $\dim X/2$, as defined below.

Let $\mathrm{Lsc}_\sigma(X,\mathbb{N}\cup \{\infty\})$ denote
the functions $f\colon X\to \mathbb{N}\cup \{\infty\}$ such that $f^{-1}((n,\infty])$
is open and $\sigma$-compact for all $n\geq 0$. Notice that if $M$ is a countably generated
Hilbert module then $M\cong M_a$ for some $a\in C_0(X,\K)^+$, and so 
$\dim M=\rank a\in \mathrm{Lsc}_\sigma (X,\mathbb{N}\cup \{\infty\})$.

\begin{defn}
Let $c\in \mathbb{R}$ and $f\in \mathrm{Lsc}(X,\mathbb{N}\cup\{\infty\})$.
Let us say that $f$ has gaps of at least $c$ if
$f(x)\neq f(y)\Rightarrow |f(x)-f(y)|\geq c$ for all $x,y\in X$.
\end{defn}

Here is a restatement  of Phillips's \cite[Proposition 4.2 (2)]{Phillips:rsh}. As for
our restatement of \cite[Proposition 4.2 (1)]{Phillips:rsh} (Lemma \ref{Phillips-embedding}), we have made some modifications to the original
statement: the space $X$ is assumed to be $\sigma$-compact instead of compact,
and the projections are taken in $C_b(X,\mathcal K)$ instead of in $\M_n(C(X))$.

\begin{lemma}(\cite[Proposition 4.2 (2)]{Phillips:rsh})\alabel{Phillips-Cancellation}
Let $X$ be a $\sigma$-compact, locally compact, Hausdorff space.
Let $Y$ a closed subset of $X$.
Let $p_1,p_2,q_1,q_2$ be  continuous projections on $X$ and $s$ and $w$ be continuous
partial isometries. Assume that $\rank p_i\geq \dim X/2$ and $p_i\perp q_i$ for $i=1,2$, and that
$q_1=s^*s$, $q_2=ss^*$, $p_1+q_1=w^*w$, and $p_2+q_2=ww^*$.

Further, let $v$ be a continuous partial isometry defined on $Y$ such that $p_1=v^*v$
and $p_2=vv^*$ on $Y$, and let $t\mapsto w_t$ be a continuous path of partial
isometries on $Y$ such that $w_t^* w_t = p_1 + q_1$ ,
$ w_tw_t^* = p_2 + q_2$, $w_0=w$ and $w_1=v+s$. Then there is
a continuous partial isometry $\tilde{v}$ on $X$ such that $\tilde{v}^*\tilde{v} = p_1$, $\tilde{v}\tilde{v}^*=p_2$  and  $\tilde{v}|_Y = v$,
and a continuous path $t\mapsto \tilde{w}_t$ of partial isometries on $X$ such that
$\tilde{w}_t^* \tilde{w}_t = p_1 + q_1$, $\tilde{w}_t \tilde{w}_t^* = p_2 + q_2$,   $\tilde{w}_0 = w$,    $\tilde{w}_1 = \tilde{v} + s$,  and
$\tilde{w}_t |_Y = w_t$.
\end{lemma}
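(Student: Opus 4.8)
The plan is to mirror exactly the two-step scaffold used in the proof of Lemma \ref{Phillips-embedding}: first establish the statement when $X$ is compact, by reducing it to Phillips's original result \cite[Proposition 4.2 (2)]{Phillips:rsh} (which is phrased for projections in $C(X,\M_n)$), and then bootstrap to the $\sigma$-compact case by exhausting $X$ with compact sets and extending over them one at a time. The only feature not already present in Lemma \ref{Phillips-embedding} is that I must carry the homotopy $t\mapsto \tilde w_t$ along with $\tilde v$; since the compact case will produce both simultaneously, this should cause no structural difficulty.

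\textbf{Compact case.} Suppose $X$ is compact. I would set $P=p_1+q_1=w^*w$ and $Q=p_2+q_2=ww^*$; these are projections in $C(X,\K)$ of bounded (continuous, integer-valued) rank. By the standard embedding of a finite-rank vector bundle into a trivial one, each of $P$ and $Q$ is Murray--von Neumann equivalent to a subprojection of the rank-$N$ trivial projection $1_N\in C(X,\M_N)$ for $N$ large enough, so I can choose continuous partial isometries $z,z'\in C(X,\K)$ with $z^*z=P$, $z'^*z'=Q$ and $zz^*,z'z'^*\leq 1_N$. Conjugating the entire configuration, I set $\bar p_i,\bar q_i$ to be the conjugates of $p_i,q_i$ (by $z$ for $i=1$, by $z'$ for $i=2$), together with $\bar s=z'sz^*$, $\bar w=z'wz^*$, $\bar v=z'vz^*$, and $\bar w_t=z'w_tz^*$. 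Tracking that $s,v,w,w_t$ have domain projection $\leq P$ and range projection $\leq Q$, a direct computation shows that all hypotheses transport: $\bar p_i\perp\bar q_i$, $\bar q_1=\bar s^*\bar s$, $\bar q_2=\bar s\bar s^*$, $\bar p_1+\bar q_1=\bar w^*\bar w$, $\bar p_2+\bar q_2=\bar w\bar w^*$, $\rank \bar p_i=\rank p_i\geq \dim X/2$, and the path $\bar w_t$ runs from $\bar w$ to $\bar v+\bar s$ on $Y$. Now all projections lie in $C(X,\M_N)$, so \cite[Proposition 4.2 (2)]{Phillips:rsh} applies and yields $\bar{\tilde v}$ and a path $\bar{\tilde w}_t$ with the asserted properties. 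Finally I set $\tilde v=z'^*\bar{\tilde v}z$ and $\tilde w_t=z'^*\bar{\tilde w}_t z$; using $z^*z=P\geq p_1$ and $z'z'^*\geq \bar p_2$, one checks $\tilde v^*\tilde v=p_1$, $\tilde v\tilde v^*=p_2$, $\tilde v|_Y=v$, and the corresponding relations for $\tilde w_t$, so conjugating back gives the required data on $X$.

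\textbf{$\sigma$-compact case.} Now let $X$ be merely $\sigma$-compact and write $X=\bigcup_i X_i$ as an increasing union of compact sets, arranging $X_1\subseteq Y$ so that $Y\cup X_1=Y$. I would define $\tilde v$ and the path $\tilde w_t$ on the increasing sequence of closed sets $Y,\,Y\cup X_2,\,Y\cup X_3,\dots$, starting from $\tilde v=v$, $\tilde w_t=w_t$ on $Y$. Having defined them on $Y\cup X_i$, I restrict attention to the compact space $X_{i+1}$ with closed subset $Z_i:=(Y\cup X_i)\cap X_{i+1}$, on which $\tilde v$ and $\tilde w_t$ are already defined and satisfy the hypotheses of the compact case (with $Z_i$ in the role of $Y$). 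Applying the compact case extends $\tilde v$ and $\tilde w_t$ continuously over all of $X_{i+1}$, agreeing with the previous definition on $Z_i$. Since $Y\cup X_i$ and $X_{i+1}$ are both closed and the two definitions agree on their intersection $Z_i$, the pasting lemma gives a continuous extension (and, for $t\mapsto\tilde w_t$, one jointly continuous in $(x,t)$) on $Y\cup X_{i+1}$. As $X=\bigcup_i X_i$, this process defines $\tilde v$ and $\tilde w_t$ on all of $X$.

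The genuinely new bookkeeping, and the step I expect to require the most care, is the compact reduction: I must verify that conjugation by $z,z'$ preserves not only the algebraic relations among the projections and partial isometries but also the homotopy endpoint condition $\tilde w_1=\tilde v+s$ (which becomes $\bar w_1=\bar v+\bar s$) and the pointwise rank inequalities $\rank p_i\geq\dim X/2$. All of these are routine once the domain and range projections of $s,v,w,w_t$ are tracked relative to $P$ and $Q$, and the joint continuity of the path is then preserved by the closed-set pasting in the $\sigma$-compact step. I expect no essential difficulty beyond this, since the two-step scaffold is identical to that of Lemma \ref{Phillips-embedding} and the substance is carried entirely by Phillips's proposition.
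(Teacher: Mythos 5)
Your proposal is correct and follows essentially the same route as the paper: the paper offers no separate proof of this lemma, presenting it as a restatement of \cite[Proposition 4.2 (2)]{Phillips:rsh} whose modifications (projections in $C_b(X,\mathcal{K})$ rather than matrix algebras, $\sigma$-compact rather than compact $X$) are handled exactly as in the proof of Lemma \ref{Phillips-embedding}. Your two steps---reduction to the matrix case over compact $X$ by conjugating with Murray--von Neumann equivalences into a trivial projection, then exhaustion by compact sets with pasting along closed subsets, carrying the homotopy through both steps---are precisely that argument.
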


For spaces of dimension 3, in order for $\rank p_i \geq \dim X /2$ to hold, rank one projections are excluded.
In the next lemma, we replace the homotopy condition in the previous lemma by a determinant-related condition, and by doing so, remove the restriction on the rank of the projection.

For a partial isometry $u\in C_b(X,\mathcal K)$
such that $u^*u=uu^*$ (i.e. $u$ is a unitary of the hereditary subalgebra generated by $u^*u$)
let us define $\det(u)\colon X\mapsto \mathbb{T}$ by $\det(u)(x):=\det (u(x)+1_{\mathcal{B}(\ltwo)}-(u^*u)(x))$. Notice that the determinant
on the right side is well defined since $u(x)-(u^*u)(x)$ has finite rank for all $x\in X$.

\begin{lemma}\alabel{Det1-Cancellation}
Let $X$ be a $\sigma$-compact, locally compact, Hausdorff space such that $\dim X\leq 3$.
Let  $Y$ a closed subset of $X$.
Let $p_1,p_2,q_1,q_2$ be  continuous projections on $X$ and $s$ and $w$ continuous
partial isometries. Assume that $\rank p_i\geq i$ and $p_i\perp q_i$ for $i=1,2$,
and that $q_1=s^*s$, $q_2=ss^*$, $p_1+q_1=w^*w$, and $p_2+q_2=ww^*$.

Further, let $v$ be a continuous partial isometry on $Y$ such that $p_1=v^*v$,
$p_2=vv^*$ and $\det (w^*(v+s))=1$ on $Y$. Then there is a continuous partial isometry $\tilde{v}$
on $X$ such that $p_1=\tilde{v}^*\tilde{v}$, $p_2=\tilde{v}\tilde{v}^*$, $\tilde{v}|_Y=v$, and $\det (w^*(\tilde{v}+s))=1$ on $X$.
\end{lemma}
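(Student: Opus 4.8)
The plan is to recast the conclusion as the extension of a \emph{single partial isometry subject to a determinant constraint}, and to recognize the hypothesis $\det(w^*(v+s))=1$ as precisely the vanishing of the one obstruction that $\dim X\le 3$ does not already dispose of. First set $u:=w^*(v+s)$ on $Y$. Using $p_i\perp q_i$ together with the relations $q_1=s^*s$, $q_2=ss^*$, $p_1+q_1=w^*w$, $p_2+q_2=ww^*$, one checks (the cross terms $v^*s,s^*v$ vanish because $p_2q_2=0=p_1q_1$) that both $v+s$ and $w$ are partial isometries with initial projection $p_1+q_1$ and final projection $p_2+q_2$; hence $u$ is a unitary of the hereditary subalgebra generated by $p_1+q_1$, and $\det u=1$ by hypothesis. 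Since $\tilde v^*\tilde v=p_1$ and $\tilde v\tilde v^*=p_2$ force $\rank p_1=\rank p_2=:r$ (these continuous projections have locally constant rank), and $\rank p_2\ge 2$, we have $r\ge 2$ throughout. The goal is an extension $\tilde v$ with $\tilde v^*\tilde v=p_1$, $\tilde v\tilde v^*=p_2$, $\tilde v|_Y=v$, and $\det(w^*(\tilde v+s))=1$ on $X$.

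Conceptually, over each point the partial isometries from $p_1$ to $p_2$ form a torsor under $U(r)$, and imposing $\det(w^*(\,\cdot\,+s))=1$ cuts this down to a torsor under $SU(r)$. Thus the admissible $\tilde v$ are exactly the sections of a fibre bundle over $X$ with fibre $SU(r)$, and $v$ is such a section over $Y$. Because $r\ge 2$, the group $SU(r)$ is $2$-connected, so with $\dim X\le 3$ every obstruction to extending the section, lying in $H^{n+1}(X,Y;\pi_n(SU(r)))$ for $0\le n\le 2$, vanishes. This is the structural reason the lemma is true; the work is to realize this extension concretely through partial isometries rather than up to homotopy, for which I would use Lemma~\ref{Phillips-Cancellation}.

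I would carry this out in two steps. \emph{(a) Extend $v$ as an unconstrained partial isometry.} Ignoring the determinant, the fibre is $U(r)$, whose only nonvanishing homotopy group below degree $3$ is $\pi_1(U(r))=\mathbb{Z}$; the extension obstruction therefore lives in $H^2(X,Y;\mathbb{Z})$ and is the image of $[\det u]=0$ under the connecting map, hence vanishes. Concretely, one extends skeleton by skeleton (over a compact exhaustion and a suitable cover): over a $3$-cell the homotopy required by Lemma~\ref{Phillips-Cancellation} exists on the bounding $2$-sphere because $\pi_2(U(r))=0$ makes $w^*(v_0+s)$ null-homotopic there, so Lemma~\ref{Phillips-Cancellation} (whose rank hypothesis $\rank p_i\ge \dim X/2$ is met, as $r\ge 2\ge \dim X/2$) supplies the extension; over $2$-cells the obstruction cocycle represents $\delta[\det u]=0$, so after adjusting the extension on the $1$-skeleton it is a coboundary and the extension proceeds; lower cells are trivial. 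This yields a continuous partial isometry $v_0$ on $X$ with $v_0^*v_0=p_1$, $v_0v_0^*=p_2$, $v_0|_Y=v$. \emph{(b) Normalize the determinant.} Now $d:=\det(w^*(v_0+s))$ is a continuous map $X\to\mathbb{T}$ with $d|_Y=1$; I would find a unitary $u_1$ of $p_1$ with $u_1|_Y=p_1$ and $\det u_1=\bar d$, whose existence follows because the obstructions to extending such a $u_1$ lie in $H^{n+1}(X,Y;\pi_n(SU(r)))=0$ for $n\le 2$, and then set $\tilde v:=v_0u_1$. One verifies $\tilde v^*\tilde v=u_1^*p_1u_1=p_1$, $\tilde v\tilde v^*=v_0p_1v_0^*=p_2$, $\tilde v|_Y=v$, and $\det(w^*(\tilde v+s))=d\cdot\det u_1=1$.

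The main obstacle is exactly that the determinant condition does \emph{not} by itself produce the homotopy demanded by Lemma~\ref{Phillips-Cancellation} on all of $Y$: connecting $u$ to the identity in the unitary group of $p_1+q_1$ meets a residual obstruction in $H^3(Y;\mathbb{Z})$ coming from $\pi_3(U(r))=\mathbb{Z}$, which $\det u=1$ need not kill. Consequently Lemma~\ref{Phillips-Cancellation} cannot be invoked in one stroke; it must be applied locally, over pieces where the required homotopy is available (the $\pi_2(U(r))=0$ level), with the determinant reserved to control the genuinely global $H^2$ obstruction in step (a) and the normalization in step (b). The bookkeeping needed to keep the local extensions mutually compatible and continuous is the principal effort; an alternative is to reopen the proof of Lemma~\ref{Phillips-Cancellation} in \cite{Phillips:rsh} and carry determinants through it, replacing the homotopy datum by the determinant throughout.
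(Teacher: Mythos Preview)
Your obstruction-theoretic reading is correct in spirit: the determinant constraint cuts the fibre from $U(r)$ down to the $2$-connected $SU(r)$, which is precisely why the extension succeeds when $\dim X\le 3$. But your proof of step~(a) has a real gap. You propose extending ``skeleton by skeleton,'' invoking Lemma~\ref{Phillips-Cancellation} over cells where the required homotopy is locally available. However, $X$ is only a $\sigma$-compact locally compact Hausdorff space of covering dimension $\le 3$; no CW structure is assumed, so classical obstruction theory does not apply directly, and you yourself concede that ``the bookkeeping needed to keep the local extensions mutually compatible and continuous is the principal effort''---none of which is supplied. The identification of the primary $H^2$ obstruction with $\delta[\det u]$ is likewise asserted, not proved.

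The paper's argument is entirely different and bypasses all of this. After reducing to $p_1+q_1=p_2+q_2$ with $w$ the identity there, one picks a rank-$1$ subprojection $r_1\le p_1$ (via Lemma~\ref{Phillips-embedding}) and writes $p_1=p_1'+r_1$. Since $\rank p_1'<\rank p_2$ and $(\dim X-1)/2\le 1$, the \emph{embedding} lemma (Lemma~\ref{Phillips-embedding})---which needs only a rank gap and no homotopy hypothesis---extends $vp_1'$ from $Y$ to $v'$ on $X$ with $(v')^*v'=p_1'$ and $v'(v')^*\le p_2$. Cancellation of projections in dimension $\le 3$ then supplies $w_2$ on $X$ with $w_2^*w_2=r_1$ and $w_2w_2^*=p_2-v'(v')^*$; multiplying $w_2$ by a scalar function normalizes $\det(v'+w_2+s)\equiv 1$ on $X$. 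Now on $Y$, $(v+s)^*(v'+w_2+s)$ is a determinant-$1$ unitary of the form $(p_1+q_1-r_1)+u'$ with $u'$ a unitary of the rank-$1$ corner $r_1$; but a $1\times 1$ unitary of determinant $1$ equals $1$, so $u'=r_1$ and hence $v'+w_2=v$ on $Y$ \emph{automatically}. Setting $\tilde v=v'+w_2$ finishes the proof.

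The key idea you missed: dropping one rank converts the relative \emph{cancellation} problem (which would demand a homotopy on $Y$, as in Lemma~\ref{Phillips-Cancellation}) into a relative \emph{embedding} problem (which does not), and the determinant hypothesis then rigidly pins down the rank-$1$ remainder so that it matches $v$ on $Y$ with no further work. Two applications of elementary lemmas, valid for arbitrary spaces of the stated kind, replace your obstruction-theoretic machinery.
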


\begin{proof}
We may assume without loss of generality that $p_1+q_1=p_2+q_2=w$.
Notice then that $v+s$ is a unitary of $\hered{(p_1+q_1)|_Y}$ and
that $\det (v+s)=1$ on $Y$.

Let $X_0$ denote the closed and open subset of $X$ where $\rank p_1=1$.
Let $r_1$ be a projection of constant rank 1 such that $r_1=p_1$ on $X_0$
and on $X\backslash X_0$, $r_1$ is any rank 1 subprojection of $p_1$ (the existence of
which is guaranteed by Lemma \ref{Phillips-embedding}). Let us write $p_1=p_1'+r_1$.

Since $\rank (vp_1')<\rank p_2$ on $Y$, we have by Lemma \ref{Phillips-embedding} that
$vp_1'$ extends to a partial isometry $v'$ on $X$ such that $ (v')^*v'=p_1'$ and $v'(v')^*\leq p_2$.
By the cancellation of projections on a space of dimension at most 3, there is a partial isometry $w_2$ on $X$ 
such that $r_1=w_2^*w_2$ and $w_2w_2^*=p_2-v'(v')^*$. Notice then that $v'+w_2+s$ is a unitary of $\hered{p_1+p_2}$
defined on $X$. By multiplying $w_2$ by a scalar function,  we may assume that
$\det(v'+w_2+s)(x)=1$ for all $x\in X$.

On the set $Y$ the unitary $(v+s)^*(v'+w_2+s)$
has the form $(p_1+p_2-r_1)+u'$, where $u'$ is a unitary in $\hered {r_1}$ such that
$\det(u')=1$. Since $\rank r_1=1$, we have that $u'=\det(u')r_1=r_1$.  Hence, $v+s=v'+w_2+s$ on $Y$, and
so $v'+w_2=v$ on $Y$. Setting  $\tilde{v}=v'+w_2$ we get the desired partial isometry on $X$.
\end{proof}

\begin{thm}\alabel{Dim3-Embedding-Iso} 
Let $M$ and $N$ be countably generated Hilbert modules over a finite dimensional space $X$.
Suppose that $\dim X\leq 3$ or that both $\dim M$ and $\dim N$ have gaps of at least $\dim X/2$.

(i) If $\dim M\leq \dim N$ and $M|_{R_{=i}(M)\cap R_{=j}(N)} \hookrightarrow N|_{R_{=i}(M)\cap R_{=j}(N)}$ for all $i, j$  then
$M\hookrightarrow N$.

(ii) If $\dim M= \dim N$ and $M|_{R_{=i}(M)}\cong N|_{R_{=i}(M)}$ for all $i$  then
$M\cong N$.
\end{thm}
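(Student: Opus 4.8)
The plan is to pass to the projection picture, writing $M = M_p$ and $N = M_q$ for $p,q \in \RP(X)$; then (ii) asks for $v \in \PIPD(X)$ with $v^*v = p$ and $vv^* = q$, while (i) asks for $v$ with $v^*v = p$ and $vv^* \leq q$. I would build $v$ by the same inductive skeleton as in the final case of the proof of Theorem \ref{DimFinite-Embedding}: fix a rank-ordered family $(p_i, A_i)_{i=0}^\infty$ for $p$ with each $A_i$ relatively closed in $R_{\geq i}(p)$ (Remark \ref{ROF-HalfOpen}), and, in case (ii) where $\dim M = \dim N$ and hence $R_{\geq i}(M) = R_{\geq i}(N)$, a rank-ordered family $(q_i, A_i)$ for $q$ over the same sets. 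One then constructs a rank-ordered family of partial isometries $(v_i, A_i)$ with $v_i^*v_i = p_i$ and $v_iv_i^* = q_i$ (resp. $v_iv_i^* \leq q|_{A_i}$), defining $v_i$ across $A_i$ one stratum at a time: starting from the already-treated set $A_i \cap (A_{i-1} \cup \cdots \cup A_{j+1})$, extend over $A_i \cap A_j$ as $j$ decreases.

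The essential difference from Theorem \ref{DimFinite-Embedding} is the tool used in each extension. There the hypothesis $\rank q \geq \rank p + (\dim X - 1)/2$ let one apply the plain embedding lemma (Lemma \ref{Phillips-embedding}); here the ranks of $M$ and $N$ may coincide, so I instead extend the increment $p_i - p_j$ — a projection of constant rank $i-j$ — onto the corresponding increment of the target by invoking a cancellation lemma. In the large-gaps case the gaps hypothesis forces $i - j \geq \dim X/2$ whenever $A_i$ and $A_j$ are both nonempty with $j < i$, so $\rank(p_i - p_j) \geq \dim X/2$ and Lemma \ref{Phillips-Cancellation} applies. In the case $\dim X \leq 3$ the increment has rank at least $1$, which is too small for cancellation, and I instead use Lemma \ref{Det1-Cancellation}, whose determinant bookkeeping is exactly what compensates for the missing room. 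In both cases the hypothesis that $M|_{R_{=i}\cap R_{=j}} \hookrightarrow N|_{R_{=i}\cap R_{=j}}$ (resp. $\iso$) on the constant-rank pieces supplies the reference partial isometry $w$ from $p_i$ to (a subprojection of) $q_i$ on the stratum, while the previously-built $v_k$ supply the old-part datum $s$ and the boundary values $v$ that the lemmas require.

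Because each cancellation lemma takes an auxiliary datum as input and returns it as output — a homotopy $t \mapsto w_t$ from $w$ to $v+s$ in the large-gaps case, and the normalization $\det(w^*(v+s)) = 1$ in the case $\dim X \leq 3$ — I would strengthen the inductive hypothesis to carry this datum on the portion of $A_i$ already treated, so that the conclusions of Lemmas \ref{Phillips-Cancellation} and \ref{Det1-Cancellation} propagate it to the next stratum. Installing it at the base of the induction is where the given local isomorphisms are spent: on a stratum one is free to replace the reference $w$ by $wu$ for a unitary $u$ of the relevant corner, and in the $\dim X \leq 3$ case multiplying $u$ by a scalar $\mathbb{T}$-valued function in a single coordinate direction lets one normalize the determinant to $1$, while in the large-gaps case the large rank of the projections kills the higher obstructions to the homotopy, leaving only this determinant-type invariant to match.

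Part (i) requires one further observation to accommodate $\dim M \leq \dim N$ with possibly unequal ranks: on a stratum where the excess $\rank q - \rank p$ is at least $(\dim X-1)/2$ I would simply extend with Theorem \ref{DimFinite-Embedding} and need no local datum there, whereas where the excess is smaller one chooses a rank-matched subprojection of $q_i - v_jv_j^*$ using the local embedding and extends isometrically onto it by the cancellation argument above. I expect the genuine obstacle to be the coherent maintenance of the determinant/homotopy normalization across all strata — verifying that the hypotheses of the cancellation lemmas (in particular $\det(w^*(v+s)) = 1$, or the existence of the path $w_t$) actually hold on each boundary $A_i \cap A_j$ given the choices already made on the lower strata, and that the normalization forced on one piece is compatible with that forced on an adjacent one. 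This coherence is exactly what the hypotheses $\dim X \leq 3$ and ``gaps of at least $\dim X/2$'' are there to guarantee, and where I would expect to spend most of the effort.
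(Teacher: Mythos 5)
Your outline for part (i) is essentially the paper's argument: build a rank-ordered family of partial isometries $(v_i,A_i)$ with $v_i^*v_i=p_i$ and $v_iv_i^*\leq q$, using the local embedding hypothesis to get a reference $w$ on each constant-rank stratum, Lemma \ref{Phillips-Cancellation} (large gaps) or Lemma \ref{Det1-Cancellation} ($\dim X\leq 3$) to make the construction compatible with previously built pieces, and Theorem \ref{DimFinite-Embedding} to extend off the constant-rank sets where the rank excess is at least $(\dim X-1)/2$. One caveat: your assertion that the gaps hypothesis ``forces $i-j\geq \dim X/2$ whenever $A_i$ and $A_j$ are both nonempty'' is false as stated, because a rank-ordered family must contain nonempty ``filler'' sets $A_j$ for ranks $j$ not attained by $\dim M$, in order that $\limsup_i \interior{A_i}=R_{=\infty}$. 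The paper repairs exactly this with a preliminary claim: the $A_i$ can be shrunk so that $A_i\cap R_{=j}=\emptyset$ for $i<j<i+\dim X/2$ (for filler indices, $A_i$ is pushed inside $R_{\geq i+\dim X/2}$). This is a fixable technical point, and with it your part (i) goes through.

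Part (ii), however, has a genuine gap. You propose a single-pass construction: fix rank-ordered families $(p_i,A_i)$ for $p$ and $(q_i,A_i)$ for $q$ over the same sets, and build $v_i$ with $v_i^*v_i=p_i$ and $v_iv_i^*=q_i$. This requires, at minimum, that $p_i$ and $q_i$ be Murray--von Neumann equivalent over all of $A_i$; but the hypothesis only gives $p\cong q$ over $R_{=i}(M)$, where indeed $p=p_i$ and $q=q_i$, whereas $A_i$ necessarily extends into the higher-rank and infinite-rank regions, where $p_i$ and $q_i$ are arbitrarily chosen rank-$i$ subbundles of $p$ and $q$ with no reason to be isomorphic. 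If instead you choose the target family adaptively ($q_i:=v_iv_i^*$, which is what part (i) produces), you only get $v^*v=p$ and $vv^*\leq q$; this forces $vv^*=q$ at every point of finite rank (equal finite ranks), so (ii) follows from (i) when $R_{=\infty}=\emptyset$, but over $R_{=\infty}$ the embedding can fail to be onto --- already over a point, an isometry of $\ell^2$ onto a proper closed subspace shows $\bigvee v_iv_i^*$ need not exhaust $q$. The paper's proof of (ii) is therefore not a one-pass construction but a two-sided approximate intertwining: apply (i) to get partial isometries with ranges $\tilde p_i=v_iv_i^*$ up to a cutoff $n_1$, complete $(\tilde p_i,A_i)_{i\leq n_1}$ to a full rank-ordered family for $q$ using Lemma \ref{ROF-ProjExtension2}, then run the construction of (i) in the reverse direction extending the $v_i^*$, and alternate, choosing the cutoffs $n_1<n_2<\cdots$ so that the resulting nested submodules $M_{r_i}\subseteq M_p$ and $M_{s_i}\subseteq M_q$ have dense union on each side; the isomorphism $M_p\cong M_q$ arises only in the limit of this zigzag. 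This intertwining mechanism --- the re-anchoring of the target family at each stage and the density bookkeeping --- is the missing idea in your proposal, and it is precisely what handles the infinite-dimensional locus.
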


\begin{proof}
Let $M=M_p$ and $N=M_q$ for some projections $p,q \in \RP(X)$.
In terms of these projections, we need to prove in part (i) that $p\preceq q$ and in part (ii) that $p\cong q$.

(i) Let us first consider the case that $\rank p$ has gaps of at least $\dim X/2$.
By Remark \ref{ROF-HalfOpen}, let $(p_i,A_i)_{i=0}^\infty$ be a rank-ordered family for $p$, where $A_i$ is relatively
closed in $R_{\geq i}(p)$ for all $i$.
Once again, we assume that $A_0 = X$.

\claim
We may shrink the sets $A_i$ such that $(p_i, A_i)_{i=0}^\infty$ is still a rank-ordered family for $p$ with $A_i$ relatively closed inside $R_{\geq i}$, and in addition, for $i < j < i + \dim X/2$,
\[ A_i \cap R_{= j} = \emptyset. \]

\begin{proof}[Proof of claim.]
We will use $B_i$ to denote the subset of $A_i$ that will replace $A_i$.
The indices $i$ may be divided into two groups: the ones for which $R_{=i} \neq \emptyset$, in which case $R_{=j} = \emptyset$ for $i < j < i + \dim X/2$, and the ones for which $R_{=i} = \emptyset$, in which case the purpose of $A_i$ is to satisfy
\[ \limsup A_i = R_{=\infty}. \]
For $i$ in the former group, we will allow $B_i = A_i$, while for $i$ in the latter group, we will need to arrange that $B_i$ is contained in $R_{\geq i + \dim X/2}$.
This will be achieved by using the same idea as the proof of Lemma \ref{ShrinkingLemma}.

For $i$ with $R_{=i} = \emptyset$, we can find a family $(C_\alpha^{(i)})$ of closed (in fact compact) subsets of $R_{\geq i + \dim X/2} \cap A_i$ whose interiors cover $R_{\geq i + \dim X/2} \cap \interior{A_i}$.
This family forms the eligible sets for $B_i$, which is to say that it suffices if we obtain $B_i$ as a finite union of $C_\alpha^{(i)}$'s.
For $i$ with $R_{=i} \neq \emptyset$, since we want $B_i = A_i$, we set the family $(C_\alpha^{(i)})$ of eligible sets to contain $A_i$ only.

We have chosen the families $(C_\alpha^{(i)})$ such that for each $i$,
\[ R_{\geq i} = \bigcup_{j \geq i} \bigcup_\alpha \interior{C_\alpha^{(j)}}. \]
As in the proof of Lemma \ref{ShrinkingLemma}, we may find sets $B_i$ which are finite unions of $C_\alpha^{(i)}$'s, such that $R_{\geq i} = \bigcup_{j \geq i} \interior{B_i}$, as required.
\end{proof}

We will construct by induction a rank-ordered family of partial isometries $(v_i,A_i)_{i=0}^\infty$ such that $p_i=v_i^*v_i$ and $v_iv_i^*\leq q$ for all $i$.

We set  $v_0=0$. Let us assume by induction that we have a rank-ordered family
$(v_i,A_i)_{i=0}^{k-1}$ with the desired properties. Let us set
$v_iv_i^*=p_i'$ for $i=0,1,\dots,k-1$. We will construct $v_k$ in two steps.

\proofparts
\pstep Let us take $m$ to be the least integer greater than or equal to $k$ for which $R_{=m} \neq \emptyset$.
We will first define $v_k$ on the set $R_{=k}(p)\cap R_{=m}(q)$.
On this set we have that $p\preceq q$
by hypothesis. Let $w_{k}$ be a continuous partial isometry on $R_{=k}(p)\cap R_{=m}(q)$
such that $p=w_{k}^*w_{k}$ and $q \geq w_{k}w_{k}^*$.
Let $\ell$ be the greatest integer less than $k$ for which $R_{=k}(p) \cap A_\ell \neq \emptyset$.
By the claim, we have $k - \ell \geq \dim X/2$, so that $p_{\ell}\cong p_{\ell}'$ and
\[
\rank (p-p_\ell) \geq \frac{\dim X}{2}
\]
on the set $R_{=k}(p)\cap R_{=m}(q)\cap A_{\ell}$.
Thus, by Lemma \ref {Phillips-Cancellation}---applied with $Y$ empty---there is a partial isometry $\tilde{v}_\ell$ taking $p-p_{\ell}$ to $w_kw_k^*-p_{\ell}'$ on  $R_{=k}(p)\cap R_{=m}(q)\cap A_{\ell}$, and such that $v_k := v_{\ell}+\tilde v_{\ell}$ is homotopic to $w_k$ on $R_{=k}(p)\cap R_{=m}(q)\cap A_{\ell}$.

We now proceed to extend $v_{k}$ to $R_{=k}(p)\cap R_{=m}(q)\cap A_{\ell - 1}$ in such a way that it is compatible
with $v_{\ell - 1}$ and is homotopic to $w_{k}$ on that set. Such extension is possible by
Lemma \ref{Phillips-Cancellation}. We continue in this way defining $v_{k}$ on the sets $R_{=k}(p)\cap R_{=m}(q)\cap (\bigcup_{j=i}^{\ell} A_{j})$ for $i=\ell,\ell-1,\dots, 0$. Since  $R_{=k}(p)\cap R_{=m}(q)\subseteq A_0=X$, this processs
results in $v_{k}$ being defined on $R_{=k}(p)\cap R_{=m}(q)$ and such that it is compatible with the partial isometries $(v_{i})_{i=0}^{k-1}$ and is homotopic to $w_{k}$.

\pstep We now look to extend $v_{k}$ to all of $A_{k}$.
Consider the set $A_k \cap A_{k-1}$.
The partial isometry $v_{k} - v_{{k-1}}$ is defined on
$A_{{k-1}}\cap A_{k} \cap R_{=k}(p) \cap R_{=m}(q)$ and implements the equivalence between $p_{k}-p_{{k-1}}$ and $q-p_{{k-1}}'$ on this set.
On the other hand, let us check that
\[ \rank (p_{k}-p_{{k-1}})+(\dim X-1)/2\leq \rank (q-p_{{k-1}}) \]
on $(A_{{k-1}}\cap A_{k})\backslash(R_{=k}(p)\cap R_{=m}(q))$.
Centainly, for $x \not\in R_{=k}(p)$ then $\rank q(x) \geq \rank p(x) \geq k + \\dim X /2$.
On the other hand, if $x \not\in R_{=m}(q)$ then $\rank q(x) \geq m + \dim X /2$.

Thus, by Theorem \ref{DimFinite-Embedding}, $v_{k}-v_{{k-1}}$ extends to a partial isometry $w$
on $A_{{k-1}}\cap A_{k}$ such that $p_{k} - p_{{k-1}}=w^*w$ and $ww^*\leq q-p_{k}$ on this set. We set $v_{k}=v_{{k-1}}+w$ on $A_{{k-1}}\cap A_{k}$. We continue extending $v_{k}$ to $A_{{k-2}}\cap A_{k}$, etc. Since $A_0\cap A_{k}=A_{k}$, by this process we obtain $v_{k}$ defined on $A_{k}$ and with the desired properties. This completes the induction step.

The proof of part (i) in the case that $\dim X\leq 3$ runs along similar lines as the above proof, but using Lemma \ref{Det1-Cancellation} in place of Lemma \ref{Phillips-Cancellation}.
Let us assume by induction that the rank-ordered family $(v_i,A_i)_{i=0}^{k-1}$ has already been defined and seek to define $v_k$. Only Step 1 of the above proof requires some modifications.

\emph{Step 1 (case $\dim X\leq 3$).} The partial isometry $w_k$ is chosen, as before,
implementing the equivalence between $p$ and $q$ on $R_{=k}(p)\cap R_{=k}(q)$.
We have $\rank  p-p_{k-1}\geq 1$ on $R_{=k}(p)\cap R_{=k}(q)\cap A_{k-1}$, which is sufficient for the application of Lemma \ref{Det1-Cancellation}. This ensures the existence of $\tilde v_{k-1}$
implementing the equivalence of $p-p_{k-1}$ with $q-p_{k-1}'$. We set $v_{k}=v_{k-1}+\tilde v_{k-1}$. Moreover, by multiplying $\tilde v_{k-1}$ by a scalar function, we may assume
that $w_k^*v_k$ is a unitary (in the hereditary algebra generated by
$p$ restricted to the set $R_{=k}(p)\cap R_{=k}(q)\cap A_{k-1}$) of determinant 1.
We continue extending $v_k$ to the sets  $R_{=k}(p)\cap R_{=k}(q)\cap A_{k-2}$, $R_{=k}(p)\cap R_{=k}(q)\cap A_{k-3}$,
etc, in such a way that it is compatible with the partial isometries $v_{k-2}$, $v_{k-3}$, etc,
and such that $\det (w_k^*v_k)=1$. That these extensions are possible is guaranteed by Lemma \ref{Det1-Cancellation}.

(ii) The proof of this part  applies equally well to the
two cases covered by the theorem: gaps of at least $\dim X/2$ in $\rank p$ and $\dim X\leq 3$.

Let $(p_i^{(1)},A_i^{(1)})_{i=0}^\infty$ be a rank-ordered family for $p$.
By part (i), we have a rank-ordered family of
partial isometries $(v_i,A_i^{(1)})_{i=0}^{\infty}$ such that $p_i^{(1)}=v_i^*v_i$ and $v_iv_i^*\leq q$.
Set $v_iv_i^*=\tilde p_i^{(1)}$.
Choose $n_1\in \mathbb{N}$ sufficiently large (how large will be specified later).
By Lemma \ref{ROF-ProjExtension2}, there is a rank-ordered family
$(q_i^{(1)},B_i^{(1)})_{i=n_1+1}^\infty$
for the restriction of $q$ to $R_{\geq n_1+1}(q)$,  such that $(\tilde p_i^{(1)},A_i^{(1)})_{i=0}^{n_1}\cup (q_i^{(1)},B_i^{(1)})_{i=n_1+1}^\infty$
is a rank-ordered family for $q$.
Choose $n_2>n_1$ large enough (how large will be specified later). By the proof of part (i), there are partial isometries $(w_i,B_i^{(1)})_{i=n_1+1}^{n_2}$ such that
$(v_i^*,A_i^{(1)})_{i=0}^{n_1}\cup (w_i,B_i^{(1)})_{i=n_1+1}^{n_2}$ is a compatible family of partial
isometries from $q$ to $p$. In this way, continue to build an intertwining between rank-ordered families of projections
for $p$ and $q$. On the side of $p$ the rank-ordered family of projections has the form
\[
(p_i^{(1)},A_i^{(1)})_{i=0}^{n_1}\cup (\tilde q_i^{(1)},B_i^{(1)})_{i=n_1+1}^{n_2}\cup (p_{i}^{(2)},A_i^{(2)})_{i=n_2+1}^{n_3}\dots
\]
and for $q$ we have
\[
(\tilde p_i^{(1)},A_i^{(1)})_{i=0}^{n_1}\cup (q_i^{(1)},B_i^{(1)})_{i=n_1+1}^{n_2}\cup
(\tilde p_i^{(2)},A_i^{(2)})_{i=n_2+1}^{n_3}\dots
\]
Let $r_i$, $i=1,2,\dots$, denote the increasing sequence of pointwise range projections below $p$ arising from the rank-ordered families 
\[(p_i^{(1)},\interior{A_i^{(1)}})_{i=0}^{n_1}, (p_i^{(1)},\interior{A_i^{(1)}})_{i=0}^{n_1}\cup (\tilde q_i^{(1)},\interior{B_i^{(1)}})_{i=n_1+1}^{n_2}, \hbox{ etc }.\]
Similarly define pointwise range projections $s_i$ below $q$. The rank-ordered families of
partial isometries constructed above give rise to an intertwining diagram of the form
\[
\begin{array}{ccccccccc}
M_{r_1} & \subseteq  & M_{r_2}      & \subseteq  & M_{r_3} & \subseteq &\cdots & \subseteq & M_p \\
\downarrow && \uparrow && \downarrow &&&& \\
M_{s_1}      & \subseteq  & M_{s_2} &\subseteq  & M_{s_3}       & \subseteq &\cdots & \subseteq & M_q,
\end{array}
\]
where the arrows indicate Hilbert C$^*$-module embeddings.
The indices $n_1,n_2$, $\dots$ are chosen such that the union of the submodules $M_{r_i}$, $i=1,2,\dots$, is dense in $M_p$,
and the union of the submodules $M_{s_i}$, $i=1,2,\dots$, is dense in $M_q$.
To do this, when choosing $n_i$, we can arrange that $R_{\geq k}(r_i)$ (or $R_{\geq k}(s_i)$) covers a given compact subset of $R_{\geq k}(p)$ for each $k \leq i$
By $\sigma$-compactness of the sets $R_{\geq k}$, the compact sets being covered may be chosen such that $\bigcup R_{\geq k}(r_i) = R_{\geq k}(p)$ for each $k$.
In this way, the intertwining
diagram above induces an isomorphism between the Hilbert modules $M_p$ and $M_q$.
\end{proof}

\begin{cor}
Let $M$ and $N$ be countably generated Hilbert $C_0(X)$-modules such that $\dim M=\dim N$ and
$M|_{R_{=i}(M)}\cong N|_{R_{=i}(M)}$ for all $i$. Then
\[
 M^{\dsum \lceil\frac{\dim X}{2}\rceil} \iso N^{\dsum \lceil\frac{\dim X}{2}\rceil}\iso
 M\oplus  N^{\dsum \lceil\frac{\dim X}{2}\rceil-1}. 
\]
\end{cor}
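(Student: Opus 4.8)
The plan is to reduce all three isomorphisms to a single mechanism: Theorem \ref{Dim3-Embedding-Iso} (ii), exploiting the fact that passing to $k := \lceil \frac{\dim X}{2} \rceil$ copies forces the dimension function to acquire gaps of at least $\dim X/2$, even though $M$ and $N$ individually may have none. Throughout I treat the three modules $M^{\dsum k}$, $N^{\dsum k}$, and $M \dsum N^{\dsum(k-1)}$ on equal footing and show they are pairwise isomorphic.

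First I would set $k = \lceil \frac{\dim X}{2} \rceil$ and observe that these three modules all share the same dimension function, namely $k\cdot \dim M = k \cdot \dim N$. This uses the hypothesis $\dim M = \dim N$ together with additivity of $\dim$ over direct sums. Next I would verify that this common dimension function $k \cdot \dim M$ has gaps of at least $\dim X/2$: if $k\dim M(x) \neq k\dim M(y)$ then $\dim M(x) \neq \dim M(y)$, and since $\dim M$ takes values in $\mathbb{N} \cup \{\infty\}$, two distinct finite values differ by at least $1$, so their $k$-fold multiples differ by at least $k \geq \dim X/2$; the cases where one of the values is $\infty$ are immediate. Hence each of the three modules satisfies the ``gaps of at least $\dim X/2$'' hypothesis of Theorem \ref{Dim3-Embedding-Iso}.

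The key remaining step is to check that the three modules restrict to isomorphic modules on every set of constant rank. Since they have equal dimension functions, their constant-rank sets coincide, and the set where the rank equals $ki$ is precisely $R_{=i}(M) = R_{=i}(N)$. Using that restriction commutes with finite direct sums, together with the hypothesis $M|_{R_{=i}(M)} \iso N|_{R_{=i}(M)}$, I would compute on $R_{=i}(M)$ that
\[ M^{\dsum k}|_{R_{=i}(M)} \iso (N|_{R_{=i}(M)})^{\dsum k} \iso (M \dsum N^{\dsum(k-1)})|_{R_{=i}(M)}, \]
while $N^{\dsum k}|_{R_{=i}(M)} = (N|_{R_{=i}(M)})^{\dsum k}$ on the nose. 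Thus the restrictions to each constant-rank set agree pairwise.

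Applying Theorem \ref{Dim3-Embedding-Iso} (ii) to the pairs $(M^{\dsum k}, N^{\dsum k})$ and $(M \dsum N^{\dsum(k-1)}, N^{\dsum k})$ then yields $M^{\dsum k} \iso N^{\dsum k}$ and $M \dsum N^{\dsum(k-1)} \iso N^{\dsum k}$, which is exactly the asserted chain. The only points requiring care---and the closest thing to an obstacle in an otherwise routine argument---are the bookkeeping around the gap condition (ensuring it holds after taking $k$ copies even when $\dim M$ has no gaps, and correctly handling the value $\infty$) and the elementary verification that restriction to a constant-rank set commutes with finite direct sums; both become transparent once the reindexing of constant-rank sets under direct sums is made explicit.
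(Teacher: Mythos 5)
Your proof is correct and follows essentially the same route as the paper: observe that the three modules share the dimension function $k\cdot\dim M$, which automatically has gaps of at least $\dim X/2$, check that their restrictions to the common constant-rank sets are pairwise isomorphic, and apply Theorem \ref{Dim3-Embedding-Iso} (ii). The paper states this in two sentences; you have merely filled in the routine verifications.
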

\begin{proof}
The modules  $M^{\dsum \lceil\frac{\dim X}{2}\rceil}$, $N^{\dsum \lceil\frac{\dim X}{2}\rceil}$, and $M\oplus  N^{\dsum \lceil\frac{\dim X}{2}\rceil-1}$ all have dimension functions with gaps of at least $\dim X/2$ and their restrictions to the sets of constant dimension are isomorphic. They are thus isomorphic by Theorem \ref{Dim3-Embedding-Iso} (ii).
\end{proof}

For the Hilbert modules in the previous result, it was shown that the isomorphism class depends only on the data given as the isomorphism classes of the restrictions to the sets of constant rank.
It is natural to ask what data of this form can be attained.
We answer this question in Proposition \ref{Dim3-DataAttained}.
The following technical tool will be needed in the proof of that proposition.

\begin{lemma}\alabel{DimFinite-Engulfing}
Let $X$ be a $\sigma$-compact locally compact Hausdorff space with finite covering dimension.
Suppose that we are given, for each $i = 0, \dots, n$, a continuous, rank $i$ projection
$p_i$ on a closed set $F_i$, such that the sets $F_i$ cover $X$ and the projections $p_i$ satisfy the compatibility condition $p_i \leq p_j$ on $F_i \cap F_j$ (this is the compatibility condition \eqref{ROF-Compatibility} required for a rank-ordered family of projections; the difference here is that the sets $F_i$ are closed).
Suppose we are also given a continuous, constant-rank projection $q$ on $X$, such that
\begin{equation} \rank q(x) \geq \rank \bigvee_{i \mid x \in F_i} p_i(x) + \frac{\dim X - 1}{2}. \alabel{DimFinite-Engulfing-RankDifference} \end{equation}
Let $Y \subseteq X$ be a closed subset, and let $v$ be a continuous partial isometry on $Y$ such that $v^*v = q|_Y, vv^* \geq \left(\bigvee_i p_i\right)|_Y$.
Then there exists a continuous partial isometry $\tilde{v}$ on $X$ such that $\tilde{v}|_Y = v$, $\tilde{v}^*\tilde{v} = q$ and $\tilde{v}\tilde{v}^* \geq \bigvee_i p_i$.

\end{lemma}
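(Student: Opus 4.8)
The plan is to construct $\tilde v$ by extending it one closed set at a time over $F_n, F_{n-1}, \dots, F_0$ in decreasing order of index, starting from $Y$, reducing each one-set extension to Lemma~\ref{Phillips-embedding} together with the infinite-rank case of Theorem~\ref{DimFinite-Embedding}. Writing $G_k := Y \cup F_n \cup \cdots \cup F_k$, so that $G_{n+1} = Y$ and $G_0 = X$ (using $\bigcup_i F_i = X$), I would define $\tilde v$ successively on $G_{n+1}, G_n, \dots, G_0$, maintaining the invariant that $\tilde v$ is a continuous partial isometry with $\tilde v^*\tilde v = q$ and with $\tilde v \tilde v^* \geq p_j$ on $F_j$ for every $j \geq k$. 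On $G_{n+1} = Y$ one takes $\tilde v = v$, where the invariant is exactly the hypothesis on $v$. Because the compatibility condition gives $(\bigvee_i p_i)(x) = \bigvee_{\{i \mid x \in F_i\}} p_i(x) = p_{j(x)}(x)$ with $j(x) = \max\{i \mid x \in F_i\}$, the partial isometry obtained on $G_0 = X$ will automatically satisfy $\tilde v\tilde v^* \geq \bigvee_i p_i$, which is the conclusion.

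The inductive step is to pass from $G_{k+1}$ to $G_k$ by extending $\tilde v$ over $F_k$. Put $Z := G_{k+1} \cap F_k$, a closed subset of $F_k$; on $Z$ the invariant already gives $\tilde v\tilde v^* \geq p_k$ (on $F_j \cap F_k$ with $j > k$ this is $\tilde v\tilde v^* \geq p_j \geq p_k$ by compatibility, and on $Y \cap F_k$ it is part of $vv^* \geq (\bigvee_i p_i)|_Y$). Since $F_k$ is a $\sigma$-compact, finite-dimensional, locally compact Hausdorff space with $\dim F_k \leq \dim X$, and since \eqref{DimFinite-Engulfing-RankDifference} gives $\rank q \geq k + \frac{\dim X - 1}{2} \geq \rank p_k + \frac{\dim F_k - 1}{2}$ on $F_k$, it suffices to solve the following \emph{single-projection engulfing problem}: extend a continuous partial isometry $\tilde v$ given on the closed set $Z \subseteq F_k$ with $\tilde v^*\tilde v = q|_Z$ and $\tilde v\tilde v^* \geq p_k|_Z$ to all of $F_k$ so that $\tilde v^*\tilde v = q$ and $\tilde v\tilde v^* \geq p_k$. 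Pasting this extension to the old $\tilde v$ along the closed set $Z$ produces a continuous partial isometry on $G_k$ that keeps $\tilde v\tilde v^* \geq p_j$ on $F_j$ for $j \geq k+1$ (there $\tilde v$ is unchanged) and gains it for $j = k$.

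To solve the single-projection problem, I would first observe that $w := \tilde v^* p_k$ is a continuous partial isometry on $Z$ with $w^*w = p_k$ and $ww^* \leq q$, so Lemma~\ref{Phillips-embedding} extends it to $\tilde w$ on $F_k$ with $\tilde w^*\tilde w = p_k$ and $r := \tilde w\tilde w^* \leq q$. Then $r$ is continuous of constant rank $k$ and $q - r$ is a continuous projection; a direct computation on $Z$ shows $\tilde v = \tilde w^* + \tilde v(q-r)$, where $\tilde w^*$ carries $r$ onto $p_k$ and the restriction $\tilde v(q-r)$ carries $q - r$ onto $\tilde v\tilde v^* - p_k \leq 1 - p_k$. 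Since $1 - p_k$ lies in $\RP(F_k)$ with infinite-dimensional fibres everywhere, $\tilde v(q-r)$ is an embedding $M_{q-r}|_Z \hookrightarrow M_{1-p_k}|_Z$, and Theorem~\ref{DimFinite-Embedding} (with the allowed case $\infty \geq \rank(q-r) + \frac{\dim F_k - 1}{2}$) extends it to $z$ on $F_k$ with $z^*z = q - r$, $zz^* \leq 1 - p_k$, and $z|_Z = \tilde v(q-r)$. Setting $\tilde v := \tilde w^* + z$ then has orthogonal source projections $r \perp (q-r)$ and orthogonal range projections $p_k \perp zz^*$, giving $\tilde v^*\tilde v = q$, $\tilde v\tilde v^* = p_k + zz^* \geq p_k$, and $\tilde v|_Z$ equal to the given partial isometry.

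The main obstacle is this last step: the subprojection $q - r$ of $q$ need not be orthogonal to $p_k$, so it cannot simply be held fixed, and one must continuously rotate it into the infinite-dimensional complement of $p_k$ while matching the prescribed boundary values on $Z$. This is precisely the point at which the infinite-rank instance of Theorem~\ref{DimFinite-Embedding} is indispensable, and it explains why the engulfing succeeds using only a continuous, constant-rank $q$ whose rank exceeds that of $\bigvee_i p_i$ by the customary $\frac{\dim X - 1}{2}$.
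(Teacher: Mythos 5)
Your proposal is correct and follows essentially the same route as the paper's proof: induct over the sets $F_n, \dots, F_0$ in decreasing order (reducing to the single-projection engulfing problem), extend the compression $\tilde v^* p_k$ by Lemma \ref{Phillips-embedding}, extend the complementary part into the infinite-rank module $M_{1-p_k}$ by Theorem \ref{DimFinite-Embedding}, and add the two pieces. Your write-up is in fact a bit more careful than the paper's (explicit inductive invariant, and correct adjoint bookkeeping where the paper writes $w_1 + w_2$ for what should be $w_1^* + w_2$), but the argument is the same.
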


\begin{proof}
We define $\tilde{v}$ on successively larger domains, by beginning with $Y$ (where it must coincide with $v$), and adding on sets $F_n, \dots, F_0$.
Extending the definition of $\tilde{v}$ to add the set $F_i$ can be done by applying the special case of the lemma where $F_n = X$; so, let us simply prove this case.

We are given $v$ such that $v^*v = q|_Y$ and $vv^* \geq p_n|_Y$.
That is, $v^*p_nv = p_n|_Y$ and $p_nvv^*p_n \leq q|_Y$.
So, by applying Lemma \ref{Phillips-embedding},  we may extend $p_nv$ to a continuous partial isometry $w_1$ on $X$ such that $w_1^*w_1 = p_n$ and $w_1w_1^* \leq q$.

We then have $v^*(1-p_n)v = (q - w_1w_1^*)|_Y$ and $(1-p_n)vv^*(1-p_n) \perp p_n|_Y$.
So by applying Theorem \ref{DimFinite-Embedding} again, we may extend $(1-p_n)v$ to a continuous partial isometry $w_2$ on $X$ such that $w_2^*w_2 = q - w_1w_1^*$ and $w_2w_2^* \perp p_n$.
Finally, set $\tilde{v} = w_1 + w_2$.
\end{proof}

\begin{prop} \alabel{Dim3-DataAttained}
Suppose we are given a lower semicontinuous function $r\in \mathrm{Lsc}_\sigma(X,\mathbb{N}\cup\{\infty\})$, and for
each $i < \infty$ a Hilbert module $M_i$ on $R_{=i} := r^{-1}(\{i\})$, of constant dimension $i$.
Assume that $r$ has gaps of at least $(\dim X - 1)/2$ (this automatically holds if $\dim X \leq 3$). Then there exists a countably generated Hilbert module $M$ on $X$ such that
$\dim M=r$ and $M|_{R_{=i}} \iso M_i$ for each $i$.
\end{prop}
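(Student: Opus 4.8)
The plan is to reduce the statement to the construction of a single projection $p \in \RP(X)$ satisfying $\dim M_p = r$ and $p|_{R_{=i}} \iso M_i$ for every finite $i$; then $M := M_p$ is the desired module. I would obtain $p$ as $\bigvee p_i$ for a rank-ordered family $(p_i, A_i)_{i=0}^\infty$, where $p_i$ is a continuous rank-$i$ projection whose restriction to the stratum $R_{=i}$ represents $M_i$, and where compatibility ($p_i \leq p_j$ on $A_i \cap A_j$ for $i \leq j$) makes $\bigvee p_i$ a genuine element of $\RP(X)$ via Proposition \ref{ROF-Characterization}(i).

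The first ingredient is a local extension of the prescribed data. Represent $M_i$ by a continuous rank-$i$ projection $e_i$ on $R_{=i}$, which is relatively closed in the open, $\sigma$-compact set $R_{\geq i}$ (here I use that $r$ is lower semicontinuous and lies in $\mathrm{Lsc}_\sigma$). A Tietze extension of $e_i$ to a self-adjoint element, followed by the functional calculus $\chii_{(1/2,\infty)}$, produces a continuous rank-$i$ projection $q_i$ extending $e_i$ over some open neighbourhood of $R_{=i}$. This is the only place where the finiteness of $i$ enters, and it is essential that the neighbourhood be kept small, since a rank-$i$ bundle need not extend over a large set. For indices $i$ with $R_{=i} = \emptyset$ I would instead take $q_i$ to be a trivial rank-$i$ projection; these serve only to build the dimension up along the infinite-rank set $R_{=\infty}$, where there is no prescribed constraint and trivial projections nest freely.

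The heart of the argument is the inductive construction of the $p_i$, which I would carry out in close parallel to the proof of Theorem \ref{Dim3-Embedding-Iso}. First I would fix a skeleton of sets $A_i$, using Lemma \ref{ShrinkingLemma} together with the shrinking device in the Claim of that proof, so that $\bigcup_{j \geq i} A_j = \bigcup_{j \geq i} \interior{A_j} = R_{\geq i}$, each $A_i$ is a relatively closed neighbourhood of $R_{=i}$ inside the domain of $q_i$, and, crucially, $A_i \cap R_{=j} = \emptyset$ whenever $i < j < i + \dim X/2$. The gap hypothesis on $r$ is exactly what makes this separation compatible with covering, since the only nonempty strata occur at values that are at least $(\dim X-1)/2$ apart. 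Assuming $p_j$ has been built for $j < i$, I would define $p_i$ in two steps, as in Theorem \ref{Dim3-Embedding-Iso}: first on the stratum $R_{=i}$ and its intersections with the lower sets $A_\ell$, where the separation forces the lower projections present to have rank at most $i - \dim X/2$, so that the rank inequality \eqref{DimFinite-Engulfing-RankDifference} holds and the Engulfing Lemma \ref{DimFinite-Engulfing} yields a rank-$i$ projection $p_i \iso q_i$ with $p_i \geq \bigvee_{j<i} p_j$; and then across the remaining overlaps $A_i \cap A_{i-1}, A_i \cap A_{i-2}, \dots$, extending the partial isometry already defined by an application of Theorem \ref{DimFinite-Embedding}, whose off-stratum rank inequality is checked using that points off $R_{=i}$ have strictly larger rank.

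Finally I would set $p := \bigvee p_i$. The covering identities give $R_{\geq i}(p) = R_{\geq i}$, whence $\dim M_p = r$; and for $x \in R_{=i}$ the only projections present are the $p_j$ with $j \leq i$, all dominated by $p_i$, so $M_p|_{R_{=i}} = M_{p_i|_{R_{=i}}} \iso M_i$. I expect the principal obstacle to lie in the skeleton step and the associated rank bookkeeping: one must simultaneously keep the sets relatively closed and $\sigma$-compact, cover all of $R_{\geq i}$ including $R_{=\infty}$ (where the trivial seeds do the work), and maintain the separation $A_i \cap R_{=j} = \emptyset$ for $i < j < i + \dim X/2$ that underwrites the rank gap in every invocation of Lemma \ref{DimFinite-Engulfing} and Theorem \ref{DimFinite-Embedding}. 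Reconciling the merely local extendability of the prescribed bundles $M_i$ with this global covering—especially around $R_{=\infty}$, which may lie in the closure of the finite strata—is the delicate part of the proof.
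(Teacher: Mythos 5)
Your overall skeleton agrees with the paper's proof: work only with the nonempty strata $n_1 < n_2 < \cdots$, represent each $M_{n_i}$ by a continuous rank-$n_i$ projection extended to an open set $U_{n_i} \supseteq R_{=n_i}$, shrink via Lemma \ref{ShrinkingLemma} to relatively closed sets $A_{n_i} \subseteq U_{n_i}$, and build the $p_{n_i}$ inductively so that each is Murray--von Neumann equivalent to the prescribed projection and dominates its predecessors on overlaps; the gap hypothesis enters exactly as you say, through the rank inequality \eqref{DimFinite-Engulfing-RankDifference}. (A minor remark: the paper applies Lemma \ref{DimFinite-Engulfing} once per index, with $Y = \emptyset$ and closed sets $F_j = A_{n_i} \cap A_{n_j}$; your second pass with Theorem \ref{DimFinite-Embedding} is redundant, since the engulfing lemma already handles all overlaps internally, though it is not incorrect.)

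The genuine gap is precisely the issue you defer to your closing sentences. For $\dim M_p = r$ to hold on $R_{=\infty}$, every point of $R_{=\infty}$ must lie in $\bigcup_{j \geq i} \interior{A_{n_j}}$ for every $i$, and each $A_{n_j}$ must sit inside a set to which $M_{n_j}$ extends; this is condition \eqref{limsupcond} of the paper, $\limsup U_{n_i} = R_{=\infty}$, and mere local extendability gives no control whatsoever on how far each $U_{n_i}$ reaches into $R_{=\infty}$. Your proposed remedy --- trivial ``seed'' projections at empty strata --- cannot supply this. A seed of rank $i$ with $n_k < i < n_{k+1}$ must, on any overlap, engulf $p_{n_k}$ and later be engulfed by $p_{n_{k+1}}$, which via \eqref{DimFinite-Engulfing-RankDifference} forces $n_k + \frac{\dim X - 1}{2} \leq i \leq n_{k+1} - \frac{\dim X - 1}{2}$; when the gaps of $r$ are exactly $(\dim X-1)/2$ (the minimum permitted), or when every rank is occupied (which is allowed for $\dim X \leq 3$), no such $i$ exists. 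Nor can the seed sets be taken disjoint from the $A_{n_j}$ to evade these constraints: a set useful for the covering condition must have nonempty interior, and when the finite strata accumulate at $R_{=\infty}$ (the hard case) such a set meets $R_{=n_j} \subseteq \interior{A_{n_j}}$ for the nearby strata, reinstating the compatibility requirements. The paper's resolution is a mechanism absent from your proposal: each $U_{n_k}$ is taken of the form $\tilde\lambda_{n_k}^{-1}([0,\tfrac{1}{2})) \cup \tilde\lambda_{n_k}^{-1}((\tfrac{1}{2},1])$, a disjoint union of open sets, on one of which $M_{n_k}$ genuinely extends and on the other of which the module is declared arbitrarily (any constant rank-$n_k$ bundle will do, since that piece misses $R_{=n_k}$); the perturbation lemma \cite[Lemma 3.1]{benoit}, applied to $(d+1)$-tuples of the cutoff functions with $d = \dim X$, ensures that no point evaluates to $\tfrac{1}{2}$ in all coordinates, so each block of $d+1$ consecutive sets $U_{n_k}$ covers the corresponding set $R_{\geq n}$, whence every point of $R_{=\infty}$ lies in infinitely many $U_{n_i}$. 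Without this (or an equivalent) argument, your construction produces a module with the correct restrictions to the finite strata but possibly finite dimensional fibres at points of $R_{=\infty}$.
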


\begin{proof}
Let us only work with $n_i$ such that $R_{n_i} \neq \emptyset$, so that
\[ n_{i+1} \geq n_i + \frac{\dim X - 1}{2}. \]
For every such $n_i$, there is an open set $U_{n_i}$  such that $R_{=n_i} \subseteq U_{n_i}$ and $M_{n_i}$ extends to $U_{n_i}$. Let us first prove the theorem assuming that the sets $(U_{n_i})$ satisfy that
\begin{align}\label{limsupcond}
\limsup U_{n_i} = R_{=\infty}.
\end{align}
We will then indicate how the sets $(U_{n_i})$ may be chosen so that the preceding
condition holds.

Given the sets $(U_{n_i})$ as indicated above,
let us use Lemma \ref{ShrinkingLemma} to obtain sets $A_{n_i}$ which are relatively closed in $R_{\geq n_i}$, such that
\[ R_{\geq n_i} = \bigcup_{j \geq i} \interior{A_{n_j}}, \]
and $M_{n_i}$ extends to $A_{n_i}$. Let us set $A_0 = X$.

We let $q_{n_i}$ be a continuous, rank $n_i$ projection defined on $A_{n_i}$ such that, by restricting to $R_{={n_i}}$, it gives a Hilbert module isomorphic to $M_{n_i}$.
Let us produce a rank-ordered family $(p_{n_i}, A_{n_i})_{i=0}^\infty$, such that $p_{n_i}$ is Murray-von Neumann equivalent to $q_{n_i}$ for each $i$. This will prove the proposition.

We will obtain the $p_{n_i}$'s inductively, with $i$ beginning at $0$ and increasing.
Let us set $p_0=0$. Given $p_0, \dots, p_{n_{i-1}}$, let us now construct $p_{n_i}$.
The sets $A_{n_i} \cap A_{n_j}$ are relatively closed in $A_{n_i}$ for $j < i$.
Thus, by Lemma \ref{DimFinite-Engulfing}, we may find $p_{n_i}$ which is Murray-von Neumann equivalent to $q_{n_i}$, and satisfies for each $x \in A_{n_i}$
\[ p_{n_i}(x) \geq \bigvee_{j \mid x \in A_{n_i} \cap A_{n_j}} p_{n_j}(x). \]
This is exactly the compatibility requirement \eqref{ROF-Compatibility} for a rank-ordered family of projections.

It remains to show that the modules $M_{n_i}$ may be extended to open sets $U_{n_i}$
satisfying \eqref{limsupcond}. For every $i$, let $W_{n_i}$ be an open set such that $R_{=n_i}\subseteq W_{n_i}\subseteq R_{\geq n_i}$ and $M_i$ extends to $W_{n_i}$.
Let $\lambda_{n_i}\colon R_{\geq n_i}\to [0,1]$ be a continuous function such that
$\lambda_{n_i}|_{R_{=n_i}}=1$ and $\lambda_{n_i}|_{(W_{n_i})^c}=0$.

Consider the vector of functions
\[
(\lambda_{n_1}|_{R_{\geq n_{d+1}}}, \lambda_{n_{2}}|_{R_{\geq n_{d+1}}},\dots,\lambda_{n_{d+1}}),
\]
where $d=\dim X<\infty$. This vector defines a continuous map from $R_{\geq n_{d+1}}$ to $[0,1]^{d+1}$. Let $\epsilon\in C_0(R_{\geq n_{d+1}})^+$ be such that $0<\epsilon(x)<1/2$ for all
$x\in R_{\geq n_{d+1}}$. Since $\dim R_{\geq n_{d+1}}\leq d$, by \cite[Lemma 3.1]{benoit} there are perturbations $\tilde \lambda_{n_{k}}$ of the functions $\lambda_{n_{k}}|_{R_{\geq n_{d+1}}}$, for $k=1,\dots,d+1$, such that
\[
\left|\tilde \lambda_{n_{k}}-\lambda_{n_{k}}|_{R_{\geq n_{d+1}}}\right|<\epsilon,\hbox{ for }k=1,\dots,d+1,
\]
and such that there is no $x\in R_{\geq n_{d+1}}$ for which $\tilde \lambda_{n_{k}}(x)=1/2$ for all $k=1,\dots,d+1$. Notice that since the function $\epsilon$ vanishes outside
$R_{\geq n_{d+1}}$, the functions $\tilde \lambda_{n_k}$ extend continuously
to $R_{\geq n_{k}}$, and agree with $\lambda_{n_{k}}$ on $R_{\geq n_{k}}\backslash R_{\geq n_{d+1}}$, for $k=1,\dots,d+1$. We have
\[
R_{=n_k}\subseteq \tilde \lambda_{n_k}^{-1}((\frac{1}{2},1])\subseteq W_{n_{k}}\]
for $k=1,\dots,d+1$. Thus, the module $M_{n_k}$ extends to $\tilde \lambda_{n_k}^{-1}((1/2,1])$. For $k=1,\dots,d+1$, let us set
\[
U_{n_k}=\tilde \lambda_{n_k}^{-1}([0,\frac{1}{2}))\cup \tilde\lambda_{n_k}^{-1}((\frac{1}{2},1]),\]
and extend $M_{n_k}$ to $U_{n_k}$ by setting it equal to an arbitrary module of constant dimension $n_k$ on the set   $\tilde \lambda_{n_k}^{-1}([0,1/2))$. The open sets $U_{n_k}$ obtained in this way satisfy that
\[
R_{\geq n_{d+1}}\subseteq U_{n_1}\cup U_{n_2}\cup\dots U_{n_{d+1}}.
\]
We continue finding the sets $U_{n_k}$, for $k=d+2,\dots,2(d+1)+1$, in the same way, and so on.
The resulting sequence of open sets satisfies \eqref{limsupcond}.
\end{proof}

Theorem \ref{Dim3-Embedding-Iso} (ii) and Proposition \ref{Dim3-DataAttained} together form a computation of the isomorphism classes of countably generated Hilbert modules with a prescribed dimension function, when $\dim X \leq 3$ or the dimension function has large gaps.
In \cite[Proposition 10]{Dupre:ClassII}, Dupr\'{e} found this computation for the large gaps situation, under the condition that the dimension function is bounded.
Our result improves Dupr\'{e}'s most notably in that we also describe the conditions for embedding (in Theorem \ref{Dim3-Embedding-Iso} (i)).
\section{Cuntz comparison of Hilbert modules\alabel{CEI}}
In \cite{CowardElliottIvanescu} Coward, Elliott and Ivanescu introduced 
a preorder relation among Hilbert C$^*$-modules in order to
describe the Cuntz semigroup of a C$^*$-algebra using Hilbert C$^*$-modules.
Let us recall this relation here.

For a submodule $F$ of a Hilbert module $H$, let us write $F\waysubset H$
if there is $T\in K(H)^+$ such that $Tx=x$ for all $x\in F$. The Cuntz comparison
of Hilbert modules is defined as follows.
\begin{defn}
Let $M$ and $N$ be Hilbert modules over  a C$^*$-algebra $A$. Then $M\CEIleq N$
if for every $F\waysubset M$ there is $F'\waysubset N$ such that $F\cong F'$.
We write $M\CEIeq N$ if $M\CEIleq N$ and $N\CEIleq M$. 
\end{defn}

Embedding and isomorphism are stronger relations than Cuntz comparison
and equivalence (see Example \ref{telescope} below). This weakening allows for more flexibility 
in the resulting comparison theory.

Let us now consider modules over a commutative C$^*$-algebra. By the correspondence
between countably generated Hilbert modules and pointwise range projections, we can apply 
the relations $\CEIleq$ and $\CEIeq$ to pointwise range projections. 
For a pointwise range projection $p$ lying below another pointwise range projection $q$ we write $p\waysubset q$ if $M_p\waysubset M_q$.
Directly expressed in terms of the projections $p$ and $q$, we have $p\waysubset q$ if there is $a\in C_0(X,\mathcal K)^+$
such that $ap=p$ and $qa=a$. We have that $p\CEIleq q$ if for every $p'\waysubset p$ there is $q'$
such that $p'\cong q'\waysubset q$.

\begin{lemma}\alabel{CCCC}
If $p$ and $q$ are pointwise range projections such that $p\waysubset q$ then
$R_{\geq i}(p) \waysubset R_{\geq i}(q)$ for each $i \geq 1$.
\end{lemma}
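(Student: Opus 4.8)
The plan is to unwind the hypothesis $p \waysubset q$ into the concrete form recorded just above the lemma: there is $a \in C_0(X,\mathcal K)^+$ with $ap = p$ and $qa = a$. The first thing I would extract is that these two equations control the pointwise ranges of $a$. From $ap = p$ we see that $a(x)$ fixes every vector in the range of $p(x)$, so that range is contained in the range of $a(x)$ and, moreover, $a(x)$ restricts to the identity on a subspace of dimension $\rank p(x)$; from $qa = a$ we see that the range of $a(x)$ is contained in that of $q(x)$. Comparing ranks pointwise then gives the chain of open sets
\[ R_{\geq i}(p) \subseteq R_{\geq i}(a) \subseteq R_{\geq i}(q) \]
for every $i$.

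The key idea is to use the $i$-th eigenvalue function $\sigma_i(a) \in C_0(X)^+$ of $a$ (continuous and vanishing at infinity, exactly as in the proof of Proposition \ref{ROF-Characterization}) as a buffer between $R_{\geq i}(p)$ and $R_{\geq i}(q)$. On $R_{\geq i}(p)$ the operator $a(x)$ has the eigenvalue $1$ with multiplicity at least $i$ (by the identity-fixing statement above), so $\sigma_i(a) \geq 1$ there, and hence, by continuity, $\sigma_i(a) \geq 1$ on the closure $\overline{R_{\geq i}(p)}$. Since $\sigma_i(a)$ vanishes at infinity, the closed set $\{\sigma_i(a) \geq 1\}$ is compact; and it is contained in $\{\sigma_i(a) > 0\} = R_{\geq i}(a) \subseteq R_{\geq i}(q)$. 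Thus $\overline{R_{\geq i}(p)}$ is a compact subset of the open set $R_{\geq i}(q)$.

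It then remains to convert this compact containment into the relation $\waysubset$. Realizing the open sets $R_{\geq i}(p)$ and $R_{\geq i}(q)$ as rank-one pointwise range projections supported on them, and using the description of $\waysubset$ in terms of a positive element, it suffices to produce $T \in C_0(X)^+$ that is identically $1$ on $R_{\geq i}(p)$ and supported inside $R_{\geq i}(q)$; Urysohn's lemma, applied to the compact set $\overline{R_{\geq i}(p)}$ and the open set $R_{\geq i}(q)$, delivers such a $T$. The restriction to $i \geq 1$ is precisely what makes this possible: for $i = 0$ one has $R_{\geq 0} = X$, and the argument would demand that $X$ be compactly contained in itself.

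The step I expect to be the crux is the middle one: recognizing that the $i$-th eigenvalue function of the intermediary element $a$---rather than of $p$ or $q$ directly---is the right object, and that the equation $ap = p$ forces $\sigma_i(a)$ to be bounded below by $1$ on $R_{\geq i}(p)$. This lower bound, together with $\sigma_i(a) \in C_0(X)$, is exactly what upgrades the plain inclusion $R_{\geq i}(p) \subseteq R_{\geq i}(q)$ to a \emph{compact} containment, which is the content of $\waysubset$.
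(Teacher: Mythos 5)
Your proof is correct, and it reaches the same intermediate goal as the paper's proof --- namely that $\overline{R_{\geq i}(p)}$ is compact and contained in $R_{\geq i}(q)$, which is exactly what the paper means by $R_{\geq i}(p) \waysubset R_{\geq i}(q)$ for open sets --- but by a different mechanism for the key step. The paper splits the work in two: compactness comes from $p(x) \leq a(x)$, hence $\|a(x)\| \geq 1$ on $R_{\geq 1}(p)$, so every $\overline{R_{\geq i}(p)}$ sits inside the compact set $\{\|a\| \geq 1\}$; the inclusion $\overline{R_{\geq i}(p)} \subseteq R_{\geq i}(q)$ then comes from a separate perturbation argument: for $x$ in the closure one picks $y \in R_{\geq i}(p)$ with $\|a(x)-a(y)\|<1$, so that $\|a(x)p(y)-p(y)\|<1$ and hence
\[
i \leq \rank p(y) \leq \rank \big(a(x)p(y)\big) \leq \rank a(x) \leq \rank q(x).
\]
Your single device --- the observation that $ap=p$ forces the $1$-eigenspace of $a(x)$ to contain $\mathrm{ran}\, p(x)$, so that the continuous function $\sigma_i(a)$ satisfies $\sigma_i(a) \geq 1$ on $R_{\geq i}(p)$ --- accomplishes both at once: the set $\{\sigma_i(a) \geq 1\}$ is compact because $\sigma_i(a) \in C_0(X)$, it contains $\overline{R_{\geq i}(p)}$ by continuity, and it lies inside $\{\sigma_i(a)>0\} = R_{\geq i}(a) \subseteq R_{\geq i}(q)$. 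What your route buys is a unified one-shot argument per $i$, with no rank-perturbation trick; what the paper's buys is that it needs nothing beyond norm continuity of $a$ and elementary rank estimates, whereas you must invoke the continuity of the ordered eigenvalue functions (legitimately, since that is established in the proof of Proposition \ref{ROF-Characterization}). Your final Urysohn step producing the function $T$ is harmless but strictly speaking unnecessary, since compact containment is already the content of the conclusion.
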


\begin{proof}
Let $a\in C_0(X,\mathcal K)^+$ be such that $ap=p$ and $aq=a$.
We have $0\neq p(x)\leq a(x)$ for $x\in R_{\geq 1}(p)$. Thus, $\|a(x)\|\geq 1$ on $R_{\geq 1}(p)$ and so
$\closure{R_{\geq 1}(p)}$ is compact.

Let us show that $\overline{R_{\geq i}(p)} \subseteq R_{\geq i}(q)$ for all $i\geq 1$ (since $\overline{R_{\geq 1}(p)}$ is compact, this suffices to complete the proof). Let $x\in \overline{R_{\geq i}(p)}$. Choose $y\in R_{\geq i}(p)$ such that
$\|a(x)-a(y)\|<1$. Then  $\|a(x)p(y)-p(y)\|<1$. Hence
\[
i=\rank p(y)\leq \rank (a(x)p(y))\leq \rank a(x)\leq \rank q(x).
\]
Thus, $x\in R_{\geq i}(q)$. 
\end{proof}

\begin{lemma}\alabel{CEIprojections}
If $p$ and $q$ are continuous projections then $p\CEIleq q$ if and only
if for any compact subset $K$ of $X$ we have $p|_{K}\preceq q|_{K}$.
\end{lemma}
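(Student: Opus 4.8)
The plan is to prove the two implications separately, with Lemma~\ref{CCCC} used to package the relation $\waysubset$ as a statement over a fixed compact set, and the rank-ordered family machinery of Proposition~\ref{ROF-Characterization} used to pass partial isometries between $X$ and its compact subsets. Throughout I will use the reformulation recorded above: $p \CEIleq q$ means that for every $p' \waysubset p$ there is $q'$ with $p' \cong q' \waysubset q$. Two facts about a relation $p' \waysubset p$ drive everything. First, as in the proof of Lemma~\ref{CCCC}, a witness $a \in C_0(X,\K)^+$ with $ap' = p'$ satisfies $\|a(x)\| \geq 1$ on $R_{\geq 1}(p')$, so $K := \closure{R_{\geq 1}(p')}$ is compact. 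Second, $a(x)p'(x) = p'(x)$ with $a(x)$ compact forces each $p'(x)$ to have finite rank; hence $p'$ is pointwise finite rank, and a rank-ordered family $(p_i',A_i)_{i=0}^\infty$ for $p'$ (with $A_0 = X$ and, for $i \geq 1$, $A_i$ relatively closed in $R_{\geq i}(p') \subseteq K$) consists of genuine continuous rank-$i$ projections.

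For the forward implication, assume $p \CEIleq q$ and fix a compact $K \subseteq X$. The key step is to manufacture a projection $p' \waysubset p$ that agrees with $p$ on $K$. By local compactness and Urysohn's lemma choose $f \in C_0(X)^+$ with $f \equiv 1$ on $K$ and $\closure{\{f > 0\}}$ compact, and let $p' \in \RP(X)$ be the pointwise range projection of $fp \in C_0(X,\K)^+$; then $p'(x) = p(x)$ wherever $f(x) > 0$, so $p'|_K = p|_K$, while $R_{\geq 1}(p') \subseteq \{f > 0\}$ is relatively compact. Taking $g \in C_0(X)^+$ with $g \equiv 1$ on $\closure{\{f>0\}}$, one checks $p(gp) = gp$ and $(gp)p' = p'$, so $gp$ witnesses $p' \waysubset p$. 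Now $p \CEIleq q$ yields $q' \waysubset q$ with $p' \cong q'$, say $p' = v^*v$, $q' = vv^*$ for $v \in \PIPD(X)$. Restricting $v$ to $K$ gives $p|_K = p'|_K \cong q'|_K$, while $q' \waysubset q$ forces $q' \leq q$ and hence $q'|_K \leq q|_K$; thus $M_{p|_K} \cong M_{q'|_K} \subseteq M_{q|_K}$, i.e.\ $p|_K \preceq q|_K$.

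For the reverse implication, assume $p|_K \preceq q|_K$ for every compact $K$, and let $p' \waysubset p$ be given; put $K := \closure{R_{\geq 1}(p')}$ and fix the rank-ordered family $(p_i',A_i)$ as above, with each $p_i'$ a continuous rank-$i$ projection. Applying the hypothesis over $K$ gives an embedding $M_{p|_K} \hookrightarrow M_{q|_K}$, say implemented by a partial isometry $w$ with $w^*w = p|_K$ and $ww^* \leq q|_K$. Set $v_i := w\,p_i'$ on $A_i$. The crucial point is that the composite embedding $M_{p_i'|_{A_i}} \subseteq M_{p|_{A_i}} \hookrightarrow M_{q|_{A_i}}$ has finite-rank source, so by the structure theorem for module isomorphisms recalled in Section~\ref{DimFinite} it is implemented by a \emph{continuous} partial isometry; this implementer agrees pointwise with $x \mapsto w(x)p_i'(x)$, so $v_i \in C_0(A_i,\K)$. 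A direct computation gives $v_i^*v_i = p_i'$ and, for $i \leq j$, $v_j^*v_i = p_j'\,p|_K\,p_i' = p_i'$ on $A_i \cap A_j$, so $(v_i,A_i)$ is a rank-ordered family of partial isometries. By Proposition~\ref{ROF-Characterization}(ii), $v := \bigvee v_i \in \PIPD(X)$ with $v^*v = p'$, and $q' := vv^* = \bigvee v_iv_i^* \in \RP(X)$ satisfies $q' = wp'w^* \leq ww^* \leq q$. Finally $R_{\geq 1}(q') = R_{\geq 1}(p') \subseteq K$ is relatively compact, so a function $g \in C_0(X)^+$ with $g \equiv 1$ on $K$ makes $gq$ a witness for $q' \waysubset q$; hence $p' \cong q' \waysubset q$, as required.

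The Urysohn constructions and the verification of the rank-ordered-family axioms are routine. I expect the main obstacle to be the continuity bookkeeping in the reverse direction: the hypothesis only provides an embedding $M_{p|_K} \hookrightarrow M_{q|_K}$ whose implementing partial isometry need \emph{not} be norm continuous when $q$ has infinite-dimensional fibres over $K$, so one cannot simply restrict or transplant it. What rescues the argument is that $p' \waysubset p$ forces $p'$ to be pointwise finite rank, so that the source modules $M_{p_i'}$ are finite-rank; their embeddings into $M_{q|_{A_i}}$ are therefore implemented by honest continuous partial isometries, and factoring them all through the single $w$ makes them compatible. Notably no rank-gap hypothesis is invoked, so none of the Phillips-type extension lemmas enter; the whole content of the proof is the interaction between compact containment and pointwise finite rank.
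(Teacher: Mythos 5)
Your proof is correct, but it is not the paper's argument, and the difference is most pronounced in the ``if'' direction. For the ``only if'' direction the paper simply restricts the relation $\CEIleq$ to $K$ and uses that $p|_K \waysubset p|_K$ when $K$ is compact (witness $a = p|_K$), whereas you stay on $X$ and manufacture the compactly supported cutdown $p' = \chii_{(0,\infty)}(fp) \waysubset p$ agreeing with $p$ on $K$; the two arguments are of comparable length, and yours has the small advantage of not invoking the paper's (asserted, not proved) fact that $\CEIleq$ passes to restrictions to closed subsets. In the ``if'' direction the paper is far shorter than you: given $p' \waysubset p$, it sets $U = R_{\geq 1}(p')$, whose closure is compact by the argument of Lemma~\ref{CCCC}, notes $p'|_U \leq p|_U \preceq q|_U$, and observes that---since $p'$ and the projection $q'$ (equal to $q$ on $U$ and $0$ elsewhere) vanish off $U$---this chain of module inclusions and embeddings is already an embedding $M_{p'} \hookrightarrow M_{q'}$ of $C_0(X)$-modules, with $q' \waysubset q$ witnessed by $gq$ exactly as in your proof. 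In other words, the ``continuity bookkeeping'' you single out as the main obstacle never arises in the paper: the composition is performed at the level of module maps, so the lack of norm continuity of the implementing partial isometry is irrelevant, and no rank-ordered families are needed at all. Your reconstruction of a global element of $\PIPD(X)$ from the family $(wp_i', A_i)$ is nevertheless valid, with one caveat: citing ``the structure theorem for module isomorphisms'' does not by itself yield norm continuity of $wp_i'$, since that theorem only produces an implementer in $\PIPD(A_i)$, which in general is not norm continuous. The fact you actually need---if $u \in \PIPD(Z)$ with $Z$ $\sigma$-compact and $u^*u$ a norm-continuous projection of constant finite rank, then $u$ is norm continuous---is true, and follows from Proposition~\ref{ROF-Characterization}(ii) applied to the one-set rank-ordered family $(u^*u, Z)$; alternatively, since $p$ is a continuous projection and hence of locally constant finite rank, the same observation shows $w$ itself is norm continuous, so $wp_i'$ is continuous as a product. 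With that justification supplied, your argument is complete.
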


\begin{proof}
If $p\CEIleq q$ then this relation is passed on to the restrictions of $p$ and $q$ to any closed subset of
$X$. We thus have $p|_{K}\CEIleq q|_{K}$ for any compact $K$. Since $p|_{K}\waysubset p|_{K}$ (choose $a=p|_K$)
we get that $p|_{K}\preceq q|_{K}$.

Suppose on the other hand that $p|_{K}\preceq  q|_{K}$ for any compact $K$. Let $p'$ be
a pointwise range projection with $p'\waysubset p$. Then $R_{\geq 1}(p')\waysubset X$, and so 
\[
p'|_{R_{\geq 1}(p')}\leq p|_{R_{\geq 1}(p')}\preceq q|_{R_{\geq 1}(p')}. 
\]
Thus, $p'\preceq q'\waysubset q$, where $q'$ is the pointwise range projection equal to $q$
on $R_{\geq 1}(p')$ and 0 on the complement of this set.
\end{proof}

The results of the Section \ref{Dim3} have the following consequences for the Cuntz semigroup.
\begin{cor}\alabel{Dim3-CEI}
 Let $M$ and $N$ be countably generated Hilbert modules over $C_0(X)$. Suppose that either 
$\dim M$ has gaps of at least $\dim X/2$ or $\dim X\leq 3$. Then $M\CEIleq N$ if and only
if $\dim M\leq \dim N $ and 
\begin{align}\alabel{CEIrestrictions}
M|_{R_{=i}(M)\cap R_{=j}(N)} \CEIleq N|_{R_{=i}(M)\cap R_{=j}(N)}
\end{align}
for all $i,j=0,1,2\dots$.
\end{cor}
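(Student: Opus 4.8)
The plan is to translate everything into the language of pointwise range projections, writing $M = M_p$ and $N = M_q$ for $p,q \in \RP(X)$, and to prove the two implications separately. The forward implication is comparatively soft, while the reverse one is where the results of Section~\ref{Dim3} are brought to bear.

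For the forward implication, assume $M \CEIleq N$. To see $\dim M \le \dim N$, fix $x_0$ and set $i = \dim M(x_0)$; using lower semicontinuity of $\rank p$ together with a bump function supported on a compact neighbourhood of $x_0$, I would manufacture a compactly supported $p' \waysubset p$ with $\rank p'(x_0) = i$. The relation $M \CEIleq N$ then provides $q' \cong p'$ with $q' \waysubset q$, and Lemma~\ref{CCCC} gives $R_{\geq i}(q') \waysubset R_{\geq i}(q)$; since $x_0 \in R_{\geq i}(p') = R_{\geq i}(q')$, this forces $\dim N(x_0) \ge i$. For the restriction condition, I would observe that each $Y_{ij} := R_{=i}(M) \cap R_{=j}(N)$ is locally closed (an intersection of the open set $R_{\geq i}(p) \cap R_{\geq j}(q)$ with a closed set), and that $\CEIleq$ is preserved both under the quotient maps associated to closed subsets and under the restriction to ideals associated to open subsets. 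Restricting $M \CEIleq N$ to $Y_{ij}$ then yields \eqref{CEIrestrictions}.

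For the reverse implication, assume $\dim M \le \dim N$ and \eqref{CEIrestrictions}; I must show that an arbitrary $p' \waysubset p$ admits $q' \cong p'$ with $q' \waysubset q$. By Lemma~\ref{CCCC} such a $p'$ is compactly supported, with $\overline{R_{\geq i}(p')} \subseteq R_{\geq i}(p) \subseteq R_{\geq i}(q)$ for all $i$, the last inclusion coming from $\dim M \le \dim N$. My goal is to embed $p'$ into $q$ using Theorem~\ref{Dim3-Embedding-Iso}(i); the image $q'$ then satisfies $q' \le q$ and $\overline{R_{\geq i}(q')} = \overline{R_{\geq i}(p')} \subseteq R_{\geq i}(q)$, which, since $q'$ is compactly supported, is exactly what is needed to conclude $q' \waysubset q$. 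To apply Theorem~\ref{Dim3-Embedding-Iso}(i) I must supply the constant-rank overlap embeddings $p'|_{R_{=i}(p') \cap R_{=j}(q)} \hookrightarrow q|_{R_{=i}(p') \cap R_{=j}(q)}$. On such an overlap $p' \le p$ is a continuous rank-$i$ projection and $q$ is a continuous rank-$j$ projection; intersecting further with the sets $R_{=k}(p)$ and combining $p' \le p$ with \eqref{CEIrestrictions}, I obtain that $p'$ is Cuntz below $q$ there, and Lemma~\ref{CEIprojections} converts this into embeddings over all compact subsets. In the case $\dim X \le 3$ the hypothesis of Theorem~\ref{Dim3-Embedding-Iso} holds outright; in the gaps case I would first enlarge $p'$ to $p'' \waysubset p$ that agrees with $p$ on a compact neighbourhood of the support of $p'$ (so that $\dim p''$ inherits the gaps of $\dim M$) and has $p' \le p''$, apply Theorem~\ref{Dim3-Embedding-Iso}(i) to $p''$ (the proof of its embedding statement uses only the gap hypothesis on the module being embedded), and then cut the resulting isomorphism down to recover $q' \cong p'$ with $q' \waysubset q$.

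The crux, and the step I expect to be hardest, is promoting the compact-level comparison data supplied by \eqref{CEIrestrictions} and Lemma~\ref{CEIprojections} to honest module embeddings over the pre-compact but non-compact $\sigma$-compact overlap sets $R_{=i}(p') \cap R_{=j}(q)$. Over these constant-rank sets Theorem~\ref{Dim3-Embedding-Iso} reduces to nothing, so the assembly must be carried out directly: exhausting the overlap by compacta and extending the embedding from one compactum to the next by the relativized embedding result Theorem~\ref{DimFinite-Embedding} when $i<j$, and by the relativized cancellation results Lemmas~\ref{Phillips-Cancellation} and~\ref{Det1-Cancellation} in the delicate equal-rank case $i=j$, where an embedding is forced to be an isomorphism while a priori only a Cuntz equivalence (an isomorphism on each compactum) is available. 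It is precisely in this assembly that the restriction $\dim X \le 3$ or the gap hypothesis is consumed, and tracking which rank differences can occur on each overlap is the bookkeeping that requires the most care.
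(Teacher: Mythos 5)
Your overall skeleton is the same as the paper's: pass to pointwise range projections, get the forward direction from the fact that $\CEIleq$ is inherited by restrictions, and prove the converse by feeding each $p'\waysubset p$ into Theorem \ref{Dim3-Embedding-Iso}(i), checking its hypotheses by means of Lemmas \ref{CCCC} and \ref{CEIprojections}. (Your identification of the promotion of compact-level data to embeddings over the pre-compact overlap sets as the crux is fair; the paper is very terse at exactly that point.) However, two of the steps you add are genuinely wrong as stated.

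First, your criterion for concluding $q'\waysubset q$ is false. You claim that $q'\leq q$, $q'$ compactly supported, and $\closure{R_{\geq i}(q')}\subseteq R_{\geq i}(q)$ for all $i$ together imply $q'\waysubset q$. Counterexample: on $X=[0,1]$, with $e_{11},e_{22}$ the first two diagonal matrix units and $e_2$ the second basis vector, let $q$ be the pointwise range projection of $b(x)=e_{11}+xe_{22}$ (so $q(0)=e_{11}$ and $q=e_{11}+e_{22}$ on $(0,1]$), and let $q'=\chii_S\,e_{22}$ where $S=\bigcup_n(\tfrac{1}{2n+1},\tfrac{1}{2n})$; this is the pointwise range projection of $g\,e_{22}$ for any $g\in C([0,1])^+$ vanishing exactly off $S$, it satisfies $q'\leq q$, and all your rank conditions hold ($\closure{R_{\geq 1}(q')}=\closure{S}$ is compact, $R_{\geq 2}(q')=\emptyset$). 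Yet if $a\in C([0,1],\K)^+$ satisfied $aq'=q'$ and $qa=a$, then $a(x)e_2=e_2$ for $x\in S$ while $a(0)=e_{11}a(0)e_{11}$ gives $a(0)e_2=0$, contradicting continuity of $a$ at $0$. The relation $\waysubset$ remembers the position of the projection, not just its rank function: your data cannot distinguish $\chii_S e_{22}$ from $\chii_S e_{11}$, and only the latter is $\waysubset q$. The gap is repairable inside the same framework: interpolate $p'\waysubset p''\waysubset p$ (take $a$ witnessing $p'\waysubset p$ and let $p''$ be the pointwise range projection of $(a-\tfrac12)_+$), apply the embedding theorem to the pair $(p'',q)$ to get $\psi\colon M_{p''}\hookrightarrow M_q$, and use that $F\waysubset G\subseteq H$ implies $F\waysubset H$; then $M_{p'}\cong\psi(M_{p'})\waysubset M_q$. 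This is also the step that the paper's own proof leaves implicit when it passes from ``$p'\preceq q$ for all $p'\waysubset p$'' to $p\CEIleq q$.

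Second, your treatment of the gaps case does not work. (a) The proposed enlargement of $p'$ to some $p''\waysubset p$ agreeing with $p$ on a compact neighbourhood of the support of $p'$ is impossible whenever $\rank p$ is unbounded (in particular infinite) there: if $a\in C_0(X,\K)^+$ satisfies $ap''=p''$, then each compact operator $a(x)$ acts as the identity on the range of $p''(x)$, and norm continuity of $a$ forces $\rank p''$ to be bounded on compact sets. Since the gap hypothesis on $\dim M$ allows unbounded and infinite fibres (e.g.\ $\dim M\equiv\infty$ has gaps vacuously), this step cannot even begin in general. (b) Your parenthetical claim that the proof of Theorem \ref{Dim3-Embedding-Iso}(i) uses the gap hypothesis only on the module being embedded is incorrect: Step 2 of that proof requires, at points of $R_{=k}(p)\setminus R_{=m}(q)$, the estimate $\rank q\geq m+\dim X/2$, which is precisely a gap condition on the target $q$. (To be fair, the paper's own one-sentence verification of the hypotheses for $(p',q)$ is questionable on the same point in the gaps case, since neither $p'$ nor $q$ need inherit the gaps of $p$; but your proposed fix does not close this, and it introduces the impossible enlargement besides.)
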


\begin{remark}
In view of Lemma \ref{CEIprojections}, the condition  \eqref{CEIrestrictions} may be restated
as $M|_K\CEIleq N|_K$ for any compact $K\subseteq R_{=i}(M)\cap R_{=j}(N)$.
\end{remark}

\begin{proof}
It is clear that if $M \CEIleq N$ then $M|_Y \CEIleq N|_Y$ for any $Y$ that is the intersection of a closed and open set.

For the converse, we may assume that $M = M_p$ and $N = M_q$ for some pointwise range projections $p$ and $q$.
If $p' \waysubset p$ then by Lemma \ref{CCCC}, we must have $R_{\geq i}(p') \waysubset R_{\geq i}(p)$ for each $i$, and so $R_{=i}(p') \cap R_{=j}(q)$ is pre-compact in $R_{=i}(p) \cap R_{=j}(q)$.
From this and Lemma \ref{CEIprojections}, we can verify that $p'$ and $q$ satisfy the hypotheses of Theorem \ref{Dim3-Embedding-Iso} (i), whence $p'\preceq q$.
Since $p' \waysubset p$ was arbitrary, this shows that $p \CEIleq q$.

\end{proof}

The following example shows that Cuntz equivalence and isomorphism differ even
for continuous projections.
\begin{example} \alabel{telescope} 
Let $X$ be the disjoint union $\bigsqcup_{i=1}^\infty \mathbb{T}\times [i,i+1]$ module the identification of
the point  $(z,i+1)\in  \mathbb T\times [i,i+1]$ 
with $(z^2,i+1)\in  \mathbb T\times [i+1,i+2]$, for $i \in \mathbb{N}$ and $z\in \mathbb{T}$.
Let $K_n$ be the image in this quotient of the set 
$\bigsqcup_{i=1}^{n-1} \mathbb{T}\times [i,i+1]\sqcup  (\mathbb{T}\times [n,n+1])$.
It is shown in \cite[Example 3F9]{Hatcher} that  while $H^2(K_n)=0$ for all $n$, $H^2(X)$ is uncountable. 
By the correspondence between line bundles and elements of $H^2(X)$ (via the first Chern class, see \cite[Theorem 3.4.16]{husemoller}), there are uncountably many non-isomorphic line bundles on $X$. These give rise to uncountably many Murray-von Neumann classes of continuous rank 1 projections on $X$. Let us show that they are all Cuntz equivalent. Let $p$ and $q$ be rank 1 continuous projections on $X$. Since $H^2(U_n)=0$, we have $p\cong q$ on $U_n$ for all $n$. Thus, $p\CEIeq q$. 
Notice that if $p$ and $q$ are continuous rank 1 projections and $p\ncong q$ then we do not have $p\preceq q$ nor 
$q\preceq p$. Thus, in this case, the modules $M_p$ and $M_q$ do not embed in each other. This answers a question raised in \cite[Page 162]{CowardElliottIvanescu}.

In \cite[Section 4]{BrownCiuperca}, two Hilbert modules (over a stably finite C$^*$-algebra) are found which are Cuntz equivalent but not isomorphic, also showing how Cuntz equivalence differs from isomorphism.
However, unlike the present example the modules in \cite{BrownCiuperca} do embed into each other.

\end{example}

\subsection{A Description of \texorpdfstring{$\Cu(C_0(X))$}{Cu(Co(X))} for \texorpdfstring{$\dim X\leq 3$}{dim X at most 3}}\alabel{Dim3-Description}
Let us review the description of the Cuntz semigroup, $\Cu(A)$, in terms of Hilbert $A$-modules, as given in \cite{CowardElliottIvanescu}.
Taking the equivalence classes of countably generated Hilbert $A$-modules under the relation $\CEIeq$ gives a set upon which $\CEIleq$ induces an order.
An addition operation may be defined by
\[ [M] + [N] := [M \dsum N]. \]
The resulting ordered semigroup is called the Cuntz semigroup, and is denoted by $\Cu(A)$.

Here, we will obtain a description of the Cuntz semigroup of $C_0(X)$ where $X$ has dimension at most three.
We will define an ordered semigroup $\specialCuntz(X)$ and show that it can be identified with $\Cu(C_0(X))$.

Let us define an equivalence relation on continuous projections on $X$ given by $p$ is equivalent to $q$ if $p|_K \mvneq q|_K$ for all $K \subseteq X$ compact.
Let $\specialV^n(X)$ denote the set of equivalence classes of projections which have constant rank $n$.
Then by Lemma \ref{CEIprojections}, when $X$ is $\sigma$-compact or $n=0$, $\specialV^n(X)$ can be identified with the elements of $\Cu(C_0(X))$ which have constant rank $n$.

Let $\specialCuntz(X)$ consist of pairs $(r, (\rho_i)_{i=0}^\infty)$, where $r \in \mathrm{Lsc}_\sigma(X)$ and $\rho_i \in \specialV^i(r^{-1}\{i\})$ for each $i$.

To make $\specialCuntz(X)$ a semigroup, we shall define an order relation and an addition operation as follows.
Let $(r, (\rho_i)_{i=0}^\infty), (r', (\rho_i')_{i=0}^\infty) \in \specialCuntz(X)$.

\textit{Ordering.} $(r, (\rho_i)_{i=0}^\infty) \leq (r', (\rho_i')_{i=0}^\infty)$ if $r \leq r'$ and for each $i$,
\[ \rho_i|_{r^{-1}\{i\} \cap r^{\prime -1}\{i\}} = \rho_i'|_{r^{-1}\{i\} \cap r^{\prime -1}\{i\}}. \]

\textit{Addition.} $(r, (\rho_i)_{i=0}^\infty) + (r', (\rho_i')_{i=0}^\infty) := (r + r', (\sigma_i)_{i=0}^\infty)$; $\sigma_i$ will be defined shortly.
Note that $(r + r')^{-1}\{i\}$ decomposes into components as
\[ (r + r')^{-1}\{i\} = \left(r^{-1}\{0\} \cap r^{\prime -1}\{i\}\right) \dunion \cdots \dunion \left(r^{-1}\{i\} \cap r^{\prime -1}\{0\}\right), \]
so that $\sigma_i$ is determined by its restriction to each set $r^{-1}\{j\} \cap r^{\prime -1}\{i-j\}$.
On $r^{-1}\{j\} \cap r^{\prime -1}\{i-j\}$, $\sigma_i = \rho_j + \rho_{i-j}'$.

\begin{prop}
Let $X$ be a locally compact Hausdorff space of dimension at most three.
Then $\Cu(C_0(X))$ is isomorphic, as an ordered semigroup, to $\specialCuntz(X)$, via the map $\Phi\colon\Cu(C_0(X)) \to \specialCuntz(X)$ given by
\[ \Phi(\alpha) = (\rank \alpha, (\alpha|_{R_{=i}(\alpha)})_{i=0}^\infty). \]
\end{prop}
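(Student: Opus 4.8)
The plan is to verify in turn that $\Phi$ is well-defined, additive, an order embedding, and a bijection, relying on Corollary~\ref{Dim3-CEI} to translate the Cuntz order into pointwise-rank data and on Proposition~\ref{Dim3-DataAttained} to supply surjectivity. Throughout I represent a class $\alpha \in \Cu(C_0(X))$ by a module $M = M_p$ for a pointwise range projection $p$, so that $\rank \alpha = \dim M \in \mathrm{Lsc}_\sigma(X)$ and the second coordinate of $\Phi(\alpha)$ is the $\specialV^i$-class of the constant-rank bundle $M|_{R_{=i}(M)}$.

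First I would dispatch well-definedness and additivity. If $M \CEIeq N$ then, since $\dim X \leq 3$, Corollary~\ref{Dim3-CEI} gives $\dim M = \dim N$; as $R_{=i}(M) = R_{=i}(N)$, the remaining conditions reduce to $M|_{R_{=i}(M)} \CEIeq N|_{R_{=i}(M)}$ for each $i$, which for constant-rank bundles is exactly equality of $\specialV^i$-classes, so $\Phi(\alpha)$ depends only on $\alpha$, and clearly lands in $\specialCuntz(X)$. For additivity, $\dim(M \dsum N) = \dim M + \dim N$, and restricting $M \dsum N$ to the component $r^{-1}\{j\} \cap r^{\prime -1}\{i-j\}$ of $R_{=i}(M \dsum N)$ (where $r = \dim M$, $r' = \dim N$) produces the $\specialV^i$-class $\rho_j + \rho_{i-j}'$, which is precisely the definition of $\sigma_i$. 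Hence $\Phi$ is a semigroup homomorphism.

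The heart of the argument, and the step I expect to be the main obstacle, is showing that $\Phi$ is an order embedding, i.e.\ that $M \CEIleq N$ if and only if $\Phi([M]) \leq \Phi([N])$. By Corollary~\ref{Dim3-CEI}, $M \CEIleq N$ is equivalent to $\dim M \leq \dim N$ together with $M|_Y \CEIleq N|_Y$ on $Y = R_{=i}(M) \cap R_{=j}(N)$ for all $i,j$. The key observation is that only the diagonal $i = j$ carries information. Indeed, $\dim M \leq \dim N$ forces $Y = \emptyset$ when $i > j$; and when $i < j$, on $Y$ the bundle $N$ exceeds the rank of $M$ by at least $1 \geq (\dim X - 1)/2$, so Theorem~\ref{DimFinite-Embedding} (with empty closed subset) gives $M|_Y \hookrightarrow N|_Y$ and hence $M|_Y \CEIleq N|_Y$ automatically. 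On the diagonal $i = j$, both restrictions are rank-$i$ bundles, and by Lemma~\ref{CEIprojections} one is Cuntz-below the other exactly when it embeds into it over every compact set; for equal ranks an embedding forces $v v^* = q|_K$, so this means the two are isomorphic over every compact set, i.e.\ equality of $\specialV^i$-classes. Comparing with the order on $\specialCuntz(X)$, which imposes $\rho_i = \rho_i'$ precisely on $R_{=i}(M) \cap R_{=i}(N)$, shows the two conditions coincide.

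Finally I would establish bijectivity. Injectivity is formal: since $\Phi$ is an order embedding and $\Cu(C_0(X))$ is a partial order under $\CEIleq$, the equality $\Phi(\alpha) = \Phi(\beta)$ yields $\alpha \leq \beta \leq \alpha$, hence $\alpha = \beta$. For surjectivity, given $(r, (\rho_i)_{i=0}^\infty) \in \specialCuntz(X)$ I choose a representing projection for each $\rho_i$, producing a constant-rank module $M_i$ on $r^{-1}\{i\}$; as $r$ is $\mathbb{N}$-valued it trivially has gaps of at least $(\dim X - 1)/2$ when $\dim X \leq 3$, so Proposition~\ref{Dim3-DataAttained} yields a countably generated module $M$ with $\dim M = r$ and $M|_{R_{=i}} \iso M_i$. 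Then $\Phi([M]) = (r, (\rho_i)_{i=0}^\infty)$, completing the proof.
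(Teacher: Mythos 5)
Your proposal is correct and follows essentially the same route as the paper: Corollary \ref{Dim3-CEI} plus Lemma \ref{CEIprojections} to identify the order relations, and Proposition \ref{Dim3-DataAttained} for surjectivity. In fact you spell out one point the paper leaves implicit, namely that the off-diagonal conditions $M|_{R_{=i}(M)\cap R_{=j}(N)} \CEIleq N|_{R_{=i}(M)\cap R_{=j}(N)}$ for $i<j$ are automatic via Theorem \ref{DimFinite-Embedding} since $1 \geq (\dim X - 1)/2$ when $\dim X \leq 3$, which is exactly what justifies the paper's terse appeal to Corollary \ref{Dim3-CEI} in the converse direction.
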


\begin{proof}
For $\alpha, \beta \in \Cu(C_0(X))$, if $\alpha \leq \beta$ then $\rank \alpha \leq \rank \beta$ and 
\[ \alpha|_{R_{=i}(\alpha) \cap R_{=i}(\beta)} \leq \beta|_{R_{=i}(\alpha) \cap R_{=i}(\beta)}. \]
That is, representing $\alpha|_{R_{=i}(\alpha) \cap R_{=i}(\beta)}$ by the constant rank projection $p$ and $\beta|_{R_{=i}(\alpha) \cap R_{=i}(\beta)}$ by $p'$, we have by Lemma \ref{CEIprojections} that $p|_K \mvnleq p'|_K$ for each $K \subseteq X$ compact.
But since $p,p'$ both have constant rank $i$, this implies that $p|_K \mvneq p'|_K$ for each such $K$, and thus
\[ \alpha|_{R_{=i}(\alpha)} \cap R_{=i}(\beta) = \beta|_{R_{=i}(\alpha)} \cap R_{=i}(\beta). \]
Hence, $\Phi(\alpha) \leq \Phi(\beta)$.

Conversely, if $\Phi(\alpha) \leq \Phi(\beta)$ then by Corollary \ref{Dim3-CEI}, we have $\alpha \leq \beta$.

To see $\Phi$ is onto, let $(r, (\rho_i)_{i=0}^\infty) \in \specialCuntz(X)$.
Then for each $i$, there exists a Hilbert $C_0(r^{-1}\{i\})$-module $M_i$ such that $[M_i] = \rho_i$.
By Proposition \ref{Dim3-DataAttained}, there exists a Hilbert module $M$ such that $\dim M = r$ and $M|_{r^{-1}\{i\}} \iso M_i$
In particular, if $\alpha$ is the Cuntz element defined by this rank-ordered family, then $\Phi(\alpha) = (r, (\rho_i)_{i=0}^\infty)$.
Hence, $\Phi$ is an order isomorphism.

Finally, it is clear by the definition of $\Phi$ that it preserves addition.
\end{proof}

We have obtained a description of $\Cu(C_0(X))$ in terms of the sets $\specialV^n(Y)$ for the $\sigma$-compact subsets of $X$ which arise as the intersection of a closed set with an open set.
Note that, in turn, $\specialV^n(Y)$ can be described with \v{C}ech cohomology of compact subsets.
For a continuous projections $p$ of constant rank $n$, by Lemma \ref{Phillips-embedding} we may decompose
\[ p = \theta_p \dsum p', \]
where $\theta_p$ is trivial with rank $n-1$ and $p'$ has rank $1$.
For $K \subseteq Y$ compact, the isomorphism class of $p'|_K$ is determined by the Chern class $c_1(p'|_K) = c_1(p|_K) \in \check{H}^2(K)$ (using \v{C}ech cohomology) \cite[Theorem 16.3.4]{husemoller}.
Thus, we see that for $[p],[q] \in \specialV^n(Y)$, $[p] = [q]$ if and only if $c_1(p|_K) = c_1(q|_K)$ for all $K \subseteq Y$ compact.
Letting
\[ \limcoh{Y} = \varprojlim_{K \text{compact}, K \nearrow Y} \check{H}^2(K), \]
we apparently have an injective map $\limchern\colon\specialV^n(Y) \to \limcoh{Y}$ given by $\limchern([p]) = (c_1(p|_K))_K$.

Moreover, $\limchern$ is surjective, as we now show.
Let $(\gamma_K)_K \in \limcoh{Y}$.
Since $Y$ is $\sigma$-compact and locally compact, let $(K_i)_{i=1}^\infty$ be an increasing sequence of compact subsets such that $K_i \subseteq \interior{K_{i+1}}$ and $Y = \bigcup_{i=1}^\infty K_i$.
Since we can find a $\sigma$-compact open set $V$ such that $K_{i-1} \subseteq V \subseteq K_i$,  we may assume without loss of generality that $\interior{K_i}$ is $\sigma$-compact for all $i$.
Since the Chern class $c_1$ is surjective (by \cite[Theorem 16.3.4]{husemoller}), for each $i$, let $p_i$ be a continuous projection defined on $K_i$ such that $\gamma_{K_i} = c_1(p_i)$.
Let $q_i \in \RP(Y)$ be given by $q_i|_{\interior{K_i}} = p_i|_{\interior{K_i}}$ and $q_i|_{Y \setminus \interior{K_i}} = 0$.
We can easily see that $q_i \CEIleq q_{i+1}$, and so by \cite[Theorem 1 (i)]{CowardElliottIvanescu}, we may define
\[ \alpha = \sup [q_i] \in \Cu(C_0(Y)). \]
Then for each $i$, by taking the tail $([q_j])_{j > i}$, we see that $\alpha|_{K_i} = \sup [p_i] = [p_i]$.
Thus, $\alpha$ has constant rank $i$ and, since $(K_i)$ is cofinal, $\limchern(\alpha) = (\gamma_K)_K$.
\section{Further remarks\alabel{FurtherRemarks}}
\subsection{The clutching construction\alabel{Clutching}}
By Theorem \ref{Dim3-Embedding-Iso} and Corollary \ref{Dim3-CEI} the isomorphism and Cuntz equivalence classes of
a Hilbert module over a space of dimension at most 3 are determined by the restrictions of the module to
the subsets where its dimension is constant. Here we give an example of Hilbert modules over $S^4$
for which this fails. The example is based on the clutching construction given by
Dupr\'{e} in \cite[Page 319]{Dupre:ClassII}.

Let $X$ be a compact Hausdorff space. Let $SX$ denote its suspension.
We view $SX$ as the quotient space of $X\times [-1,1]$ obtained identifying
all the points in $X\times \{-1\}$ and the points in $X\times \{1\}$ (see \cite{husemoller}).
When speaking of subsets of $SX$, we use  the notation $X\times_{\sim} U$, with $U\subseteq [-1,1]$,
to refer to the image of $X\times U$ in the quotient.

Consider a Hilbert $C(SX)$-module $M$ with dimension $n$ on $X\times_\sim [-1,0]$ and dimension $m$ on $X\times_\sim (0,1]$.
Let $M \iso M_p$ for some range projection $p$, and let $(p_1,A_1),(p_2,A_2)$ be a rank-ordered family of projections for $p$, so that $p_1,p_2$ have ranks $m$ and $n$ respectively.
Necessarily, $A_2 = X\times_\sim (0,1]$, and by a possible shrinking, we may assume $A_1 = X\times_\sim [-1, \epsilon)$ for some $\epsilon > 0$.

Since the sets $X\times_{\sim} [-1,\epsilon)$ and $X\times_{\sim} (0,1]$ are contractible, $p_1$ and $p_2$ are trivial on these sets.
That is,  there are partial isometries
$v_1$ and $v_2$ such that $p_1=v_1^*v_1$, $v_1v_1^*=1_n$, $p_2=v_2^*v_2$ and $v_2v_2^*=1_m$.
Consider the continuous partial isometry $c_{v_1,v_2,\epsilon_0}\in \M_{m}(C(X))$ given by
\[
c_{v_1,v_2,\epsilon_0}(x)=v_2(x,\epsilon_0)v_1^*(x,\epsilon_0),
\]
where $\epsilon_0\in (0,\epsilon)$. Notice that $c_{v_1,v_2,\epsilon_0}^*c_{v_1,v_2,\epsilon_0}=1_n$.
Let us denote by $U_{n,m}(C(X))$ the set of partial isometries $c\in \M_m(C(X))$ such that $c^*c=1_n$. 

\begin{prop} (\cite[Section 4, Corollary 2]{Dupre:ClassII}) \alabel{Clutching-Iso-Homot}
The map $[M_p]\mapsto [c_{v_1,v_2,\epsilon_0}]$ is a well-defined bijection from the isomorphism classes of Hilbert $C(SX)$-modules
with dimension  $n$ on $X\times_\sim [-1,0]$, and dimension $m$ on $X\times_\sim (0,1]$, to the path connected components of  $U_{n,m}(C(X))$.
\end{prop}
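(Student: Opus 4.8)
The plan is to adapt the classical vector-bundle clutching argument (\cite{husemoller}) to the present partial-isometry setting, exploiting three geometric facts: the two halves $A_1 = X\times_\sim[-1,\epsilon)$ and $A_2 = X\times_\sim(0,1]$ are contractible, their overlap $A_1\cap A_2 = X\times_\sim(0,\epsilon)$ deformation retracts onto $X\times\{\epsilon_0\}\cong X$, and any unitary-valued map on a contractible space is null-homotopic. For $c\in U_{n,m}(C(X))$ I would first build a \emph{standard} module: put $p_2 = 1_m$ on $A_2$ and $p_1 = cc^*$ on $A_1$ (pulled back constantly in the suspension coordinate), and set $M(c) := M_{p_1\vee p_2}$. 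Since $cc^*\leq 1_m$, the compatibility $p_1\leq p_2$ holds on the overlap; taking $v_2 = 1_m$ and $v_1 = c^*$ one computes $v_1^*v_1 = cc^* = p_1$, $v_1v_1^* = c^*c = 1_n$, and the associated clutching map $c_{v_1,v_2,\epsilon_0} = v_2(\cdot,\epsilon_0)v_1^*(\cdot,\epsilon_0) = c$ exactly. This already gives surjectivity of the proposed map onto $\pi_0(U_{n,m}(C(X)))$.

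Next I would establish that the homotopy class attached to a \emph{fixed} module $M_p$ is independent of the auxiliary choices. Varying $\epsilon_0$ in $(0,\epsilon)$ moves $c_{v_1,v_2,\epsilon_0}$ continuously, so the path component is unchanged. The trivializations $v_1,v_2$ are determined only up to left multiplication by unitaries $u_1\in M_n(C(A_1))$ and $u_2\in M_m(C(A_2))$; since $A_1,A_2$ are contractible these unitaries are null-homotopic, and the replacement $c\mapsto u_2 c u_1^*$ then preserves the path component. The residual freedom in selecting the rank-$n$ subprojection $p_1$ of $p$ over the overlap is handled the same way, as any two choices are homotopic over the contractible $A_1$. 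Thus a well-defined element of $\pi_0(U_{n,m}(C(X)))$ is attached to each $M_p$.

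The heart of the proof is the pair of statements: (I) every such $M_p$ is isomorphic to the standard module $M(c)$ for $c = c_{v_1,v_2,\epsilon_0}$; and (II) $M(c)\cong M(c')$ if and only if $c$ and $c'$ lie in the same path component of $U_{n,m}(C(X))$. Granting these, the map $[M_p]\mapsto[c]$ is well-defined on isomorphism classes (if $M_p\cong M_{p'}$ then $M(c)\cong M(c')$, so $[c]=[c']$ by (II)) and injective (if $[c]=[c']$ then $M_p\cong M(c)\cong M(c')\cong M_{p'}$), which together with surjectivity completes the bijection. For (I), I would build the implementing partial isometry $w\in\PIPD(SX)$ from a rank-ordered family $(w_1,A_1),(w_2,A_2)$, taking $w_2 = v_2$ and $w_1$ built from $cv_1$ (so that $w_1^*w_1 = p_1$ and $w_1w_1^* = cc^*$), then using the retraction of the overlap onto $X\times\{\epsilon_0\}$ to correct the compatibility condition \eqref{ROF-PIso-Compatibility}, which holds on the nose only at the single slice $t=\epsilon_0$. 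For the forward direction of (II), an isomorphism restricts on each contractible half to trivialization data unique up to null-homotopic unitaries, and comparing on the overlap forces $c'= u_2 c u_1^*$ with the $u_i$ null-homotopic, hence $c\simeq c'$.

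The main obstacle is the reverse direction of (II): converting a homotopy $t\mapsto c_t$ from $c$ to $c'$ into an honest module isomorphism. Here I would reparametrize the homotopy along the suspension coordinate over the collar $X\times_\sim(0,\epsilon)$, so that the family $c_t$ is absorbed as a continuous regluing of the two trivial pieces; contractibility of the halves guarantees that the resulting partial isometries trivialize correctly, and Proposition \ref{ROF-Characterization} then assembles them into the desired isomorphism. Ensuring that these locally defined partial isometries patch continuously and satisfy the rank-ordered compatibility on the overlap --- rather than merely at the slice $t=\epsilon_0$ --- is the delicate point, and is exactly where the suspension geometry, as opposed to an arbitrary base, is essential.
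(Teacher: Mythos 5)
The paper offers no proof of this proposition at all --- it is quoted from Dupr\'e's \emph{Classifying Hilbert bundles II} --- so your argument must stand on its own; and it contains a genuine gap. Every key object in your proposal is built by pulling data on $X$ back ``constantly in the suspension coordinate'' over $A_1 = X\times_{\sim}[-1,\epsilon)$: the standard module uses $p_1 = cc^*$, its trivialization uses $v_1 = c^*$, and step (I) uses $w_1 = cv_1$. But $A_1$ contains the cone point, i.e.\ the image of $X\times\{-1\}$ in $SX$, which is a \emph{single point}; a continuous function on $A_1$ must take one value there, whereas $c(x)c(x)^*$ and $c(x)$ vary with $x$. (You cannot dodge this by shrinking $A_1$: any rank-ordered family for such a module needs $A_1$ to be a neighbourhood of $X\times_\sim[-1,0]$, hence to contain the cone point.) So the module $M(c)$ you construct does not exist, and with it both surjectivity and the reduction (I) to standard models collapse.

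This is not a cosmetic defect, because for general $c$ no repair \emph{inside} $M_m(C(X))$ is possible: if $p_1\leq 1_m$ were any continuous projection on the contractible cone $A_1$ restricting to $cc^*$ on a slice, then $\mathrm{Im}(p_1)$ and $\mathrm{Im}(1_m-p_1)$ would both be trivial bundles, forcing $\mathrm{Im}(1_m-cc^*)$ to be trivial; in general it is only \emph{stably} trivial. Concretely, for the tautological $c\in U_{1,3}(C(S^5))$, $c(x)=$ the isometry $\mathbb{C}\to\mathbb{C}^3$ with range $\mathbb{C}x$, the complement $x^{\perp}$ is the nontrivial rank-two bundle coming from the generator of $\pi_4(U(2))\cong\mathbb{Z}/2$: were it trivial, the principal bundle $U(3)\to S^5$ would split as $S^5\times U(2)$, contradicting $\pi_4(U(3))=0$. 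The missing idea is \emph{stabilization}: pointwise range projections take values in $\K$, not $M_m$, and the space of isometries of $\mathbb{C}^n$ into $\ltwo(\mathbb{N})$ is contractible. One must therefore extend $c$ itself (not $cc^*$) over the cone to a continuous family of isometries $\iota$ with values in $\K$, equal to $c$ for $t\geq 0$ and constant at the cone point, and then set $p_1=\iota\iota^*$, $v_1=\iota^*$, $p_2=1_m$; this standard module exists and has clutching map exactly $c$. The same issue resurfaces in your step (II, reverse): a homotopy $c_t$ moves the range projections $c_tc_t^*$, so the ``regluing'' has to be implemented by writing $c_t=u_tcw_t$ with $u_t$ a path of $M_m$-unitaries conjugating the ranges ($u_0=1$) and $w_t$ a path in the $n\times n$ unitaries ($w_0=1$), after which it is precisely the null-homotopy $t\mapsto w_t$ of $w_1$ that gets absorbed into the cone $A_1$; your collar reparametrization as described never confronts this, and it is the same obstruction in disguise.
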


\begin{example}\label{clutching-sphere}
Say $X=S^3$. Then $SX=S^4$. Let $S^4_+$ denote an open hemisphere of $S^4$
and $S^4_-$ its complement. By the previous proposition, the isomorphism classes
of Hilbert modules on $S^4$ that have constant rank 1 on $S^4_-$
and constant rank 2 on $S^4_+$ are in bijection with the homotopy classes
of partial isometries  $c\in I_{1,2}(C(S^3))$. For every $x\in S^3$, the elements
$c(x)\in \M_2(\mathbb{C})$ such that $c^*(x)c(x)=1_1$ correspond to the points in the unit sphere of
$\mathbb{C}^2$, i.e., $S^3$. Thus, the partial isometry $c$ may be viewed as a map from $S^3$ to $S^3$.
Such a map is classified, up to homotopy, by its degree. Thus, there is one isomorphism class
for every integer. Notice, on the other hand, that the modules corresponding to these isomorphism classes all satisfy
that their restrictions to $S^4_-$ and $S^4_+$---i.e., the sets where their dimension is constant---are pairwise isomorphic (since the hemispheres of the sphere are contractible).
\end{example}

In the next proposition we show that, for the Hilbert modules covered by Proposition
\ref{Clutching-Iso-Homot}, Cuntz equivalence agrees with isomorphism (and so, Example \ref{clutching-sphere} shows that the Cuntz class of a Hilbert $C_0(X)$-module may not by determined by its restrictions to the sets of constant dimension if $\dim X\geq 4$).

\begin{prop}
The homotopy class of $c_{v_1,v_2,\epsilon_0}$ depends only on the Cuntz class of 
$M_{p_1\vee p_2}$.
\end{prop}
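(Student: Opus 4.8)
The plan is to prove the stronger assertion that $M_p \CEIleq M_{p'}$ already forces $[c]=[c']$, where $p = p_1\vee p_2$, and $p'$ is the range projection of a second module of the same clutching type (dimension $n$ on $X\times_\sim[-1,0]$ and $m$ on $X\times_\sim(0,1]$), with clutching partial isometry $c'=c_{v_1',v_2',\epsilon_0}$. Since $SX$ is compact, this will combine with Proposition \ref{Clutching-Iso-Homot} to show that Cuntz equivalence agrees with isomorphism for these modules.

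The first step is to produce, for each small $\delta>0$, a compactly contained submodule of $M_p$ that still carries the full clutching data. I would take the range projection $p^{(\delta)}\leq p$ which equals $p_1$ on $X\times_\sim[-1,\delta]$, equals $p$ on $X\times_\sim[2\delta,1]$, and interpolates (using $p_1\leq p$) on the collar $X\times_\sim[\delta,2\delta]$, and set $F_\delta:=M_{p^{(\delta)}}$. Because $R_{\geq m}(p^{(\delta)})=X\times_\sim[2\delta,1]$ is a closed, hence compact, subset of the open set $R_{\geq m}(p)=X\times_\sim(0,1]$, and all $R_{\geq i}$ are compact in the compact space $SX$, one checks directly (as in the discussion around Lemma \ref{CCCC}) that $F_\delta\waysubset M_p$. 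Choosing $\epsilon_0\in(2\delta,\epsilon)$, the module $F_\delta$ coincides with $M_p$ near the slice $X\times\{\epsilon_0\}$ and carries the same rank $n$ subprojection $p_1$ from below; since its rank $m$ region $X\times_\sim[2\delta,1]$ is again a (contractible) truncated cone, the clutching class of $F_\delta$ is well defined by the recipe preceding Proposition \ref{Clutching-Iso-Homot}, and by construction it equals $[c]$.

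Next I would feed these submodules through the Cuntz relation. Since $M_p\CEIleq M_{p'}$ and $F_\delta\waysubset M_p$, there is $F'_\delta\waysubset M_{p'}$ with $F'_\delta\iso F_\delta$. Isomorphisms preserve the pointwise dimension function, so $F'_\delta$ has dimension $n$ on $X\times_\sim[-1,0]$ and $m$ on $X\times_\sim[2\delta,1]$; its clutching class $[c_{F'_\delta}]$ is therefore well defined, and by isomorphism invariance $[c_{F'_\delta}]=[c_{F_\delta}]=[c]$. On the other hand, $F'_\delta$ is a submodule of $M_{p'}$ whose fibre has full dimension $n$ over the lower cone and full dimension $m$ over $X\times_\sim[2\delta,1]$; a full rank subbundle of a bundle equals the whole bundle, so $F'_\delta$ agrees with $M_{p'}$ on both of these caps, and in particular near the slice. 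Evaluating the same well defined class $[c_{F'_\delta}]$ using the trivialisations of $M_{p'}$ on these caps then gives $[c_{F'_\delta}]=[c']$. Comparing the two evaluations yields $[c]=[c']$.

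The main obstacle I anticipate is the well-definedness and isomorphism invariance of the clutching class for the auxiliary modules $F_\delta$ and $F'_\delta$, which are not literally of the form classified in Proposition \ref{Clutching-Iso-Homot} because their dimension dips below $m$ on the small collar near the equator. The point that makes this work is that both the lower rank $n$ region and the upper rank $m$ region remain truncated cones, hence contractible, so the trivialising partial isometries are unique up to null homotopy and $[c]$ depends only on the germ at the equator of the pair consisting of the rank $n$ subbundle from below and the rank $m$ bundle from above; this germ is exactly what $F'_\delta$ shares with $M_{p'}$. I would isolate this as a short lemma, essentially a localised version of the well-definedness argument for Proposition \ref{Clutching-Iso-Homot}. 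Finally, note that only the relation $M_p\CEIleq M_{p'}$ is used, so Cuntz subequivalence, and not merely equivalence, already determines the clutching class.
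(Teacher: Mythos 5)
The first part of your argument is sound, and in fact it reconstructs what the paper's proof leaves implicit: the paper's partial isometries $z_1,z_2$ amount to exactly your mechanism of cutting $M_p$ down to a compactly contained submodule $F_\delta$ that retains the clutching data, pushing it through the Cuntz relation, and using fullness of rank at the two caps to see that the image fills out $M_{p'}$ there. (One small repair: the rank function of a pointwise range projection is lower semicontinuous, so the rank-$m$ locus of your $p^{(\delta)}$ must be open, say $X\times_{\sim}(\delta',1]$ with $\delta'\in(\delta,2\delta)$, rather than the closed set $X\times_{\sim}[2\delta,1]$; this does not affect the idea.) The genuine gap is in your final step. Write $F'_\delta=M_r$ with $r\leq p'$ and $r=p'$ on $X\times_{\sim}[-1,0]\cup X\times_{\sim}(\delta',1]$. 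Any rank-ordered family for $r$ has the form $(r_1,X\times_{\sim}[-1,\tilde\epsilon)),\ (p'_2,X\times_{\sim}(\delta',1])$, and the clutching function of $F'_\delta$ must be evaluated at a slice $\epsilon_0>\delta'$ using a trivialization of $r_1$. But $r_1$ agrees with $p'_1$ only on $X\times_{\sim}[-1,0]$; on the collar $X\times_{\sim}(0,\delta']$ it is an uncontrolled rank-$n$ subbundle of $p'_2$, which $v'_1$ does not trivialize. So $F'_\delta$ and $M_{p'}$ do not share a germ at a common equator (their equators sit at heights $\delta'$ and $0$, and the region where they agree is far from both), and the proposed evaluation of $[c_{F'_\delta}]$ via the trivializations of $M_{p'}$ is simply not defined.

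More seriously, the lemma you would use to bridge this---that the clutching class depends only on the germ at the equator of the pair consisting of the rank-$n$ bundle from below and the rank-$m$ bundle from above---is false. The class is a global invariant of the bundles over the two cones: well-definedness rests on running correction unitaries along paths down to the cone points, which a germ does not see. Concretely, take $X=S^3$, $n=1$, $m=2$, $p_1=p'_1=e_{11}$ and $p_2=1_2\oplus 0_2$ constant inside $M_4\subseteq\K$, and set $p'_2(x,t)=G(x,t)(1_2\oplus 0_2)G(x,t)^*$, where $G(\cdot,t)$ is a null-homotopy in $U(4)$ equal to $1_4$ for $t\leq 1/3$ and to $\mathrm{diag}(W,W^*)$ at $t=2/3$, with $W\colon S^3\to SU(2)$ of degree one; then $p'_2=p_2$ for $t\leq 1/3$ and again for $t\geq 2/3$, so the two pairs have identical germs at the equator, yet any trivialization of $p'_2$ over the whole upper cone must absorb $W$, and the two clutching classes differ by the generator of $\pi_3(SU(2))\cong\mathbb{Z}$. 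What your argument actually needs is the (true) statement that a submodule $M_r\subseteq M_{p'}$ with $r=p'$ over both caps has the same clutching class as $M_{p'}$; proving that requires the global computation---transfer trivializations through the partial isometry implementing $F_\delta\iso F'_\delta$ (this is the role of the paper's $z_1,z_2$ and $v_1'=w_1z_1$, $v_2'=w_2z_2$), verify the resulting algebraic identity between the two clutching functions, and then kill the correction unitaries by sliding them to the cone points. That is precisely the paper's proof, so the detour through a germ principle cannot be made to work as stated, and repairing it collapses your route into the paper's.
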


\begin{proof}
Suppose that $p_1\vee p_2\CEIeq q$ for some projection $q \in \RP(SX)$, and $q$ has rank $n$ on $X\times_\sim [-1,0]$ and rank $m$ on $X\times_\sim (0,1]$.
Then for $\epsilon'\in (0,\epsilon)$ there is a rank-ordered
family of partial isometries
\[(z_1,X\times_{\sim} [-1,\epsilon)),(z_2,X\times_{\sim} (\epsilon',1])\]
such that $z_1^*z_1=p_1$ and $z_1z_1^*\leq q$ on  $X\times_{\sim} [-1,\epsilon)$,
and $z_2^*z_2=p_2$ and $z_2z_2^*=q$ on $X\times_{\sim} (\epsilon',1])$.
Set $z_1z_1^*=q_1$ and $q|_{X\times_{\sim} (0,1])}=q_2$. Then
\[
(q_1,X\times_{\sim} [-1,\epsilon)),(q_2,X\times_{\sim} (0,1]) 
\]
is a rank-ordered family for $q$.
Let $w_1$ and $w_2$ trivializations for $q_1$ and $q_2$. Choose $\epsilon_0\in (\epsilon',\epsilon)$.
Set $z_1w_1=v_1'$ and $z_2w_2=v_2'$. We have
\[
c_{w_1,w_2,\epsilon_0}(x)=(w_2^*w_1)(x,\epsilon_0)=
(v_2'(v_1')^*)(x,\epsilon_0)=c_{v_1',v_2',\epsilon_0}(x).
\]
The partial isometries $v_1'$ and $v_2'$
are trivializations for $p_1$ and $p_2$ on the sets $X\times_{\sim}[-1,\epsilon)$ and
$X\times_{\sim} (\epsilon',1]$ respectively.
Suppose that $v_1,v_2$ are any trivializations for $p_1$ and $p_2$.
The unitaries  $(v_1'v_1^*)(\cdot,\epsilon_0)\in M_n(C(X))$ and $(v_2(v_2')^*)(\cdot,\epsilon_0)\in M_m(C(X))$ are connected to constant unitaries (on $X$) by the paths $t\mapsto (v_1'v_1^*)(\cdot,t)$, $t\in [-1,\epsilon_0]$, and $t\mapsto (v_2(v_2')^*)(\cdot,t)$, $t\in [\epsilon_0,1]$. These constant unitaries are in turn 
connected to $1_n$ and $1_m$ respectively. Thus, $c_{v_1',v_2',\epsilon_0}$ is homotopic to $c_{v_1,v_2,\epsilon_0}$, as required.
\end{proof}

\subsection{The group \texorpdfstring{$K_0^*(C_0(X))$}{Ko*(Co(X))}\alabel{K0*}}
It is in \cite{Cuntz:dimensionfunctions} that Cuntz laid the groundwork for what would later be called the Cuntz semigroup.
The invariant that interested Cuntz there (and which he denoted by $K_0^*(A)$) is the enveloping group of the unstabilized Cuntz semigroup, generated by the unstabilized Cuntz semigroup in the same way that $K_0(A)$ is generated by the Murray-von Neumann semigroup of $A$.
(The unstabilized Cuntz semigroup is the subsemigroup of $\Cu(A)$ containing only those elements that can be represented by finitely generated submodules of $A^n$ for some $n$; it is denoted $W(A)$.  The terminology ``unstabilized'' is justified by the fact that $\Cu(A) = W(\K \tens A)$.)

Here, we find a description of $K_0^*(C_0(X))$ for finite dimensional $X$.
It turns out that introducing cancellation destroys both types of non-triviality that we've seen: that arising from non-trivial constant rank projections, and the more subtle nontriviality in how the constant rank pieces fit together, as seen in Example \ref{clutching-sphere}.

Following \cite[Section 5]{Dupre:ClassI}, we call a Hilbert $C_0(X)$-module $M$ elementary if it is isomorphic to one of the form
\[ \bigdsum_{i=1}^n C_0(U_i), \]
for some ($\sigma$-compact) open sets $U_i$. If $U$ and $V$ are $\sigma$-compact open sets, then 
\[ C_0(U) \dsum C_0(V) \iso C_0(U \cup V) \dsum C_0(U \cap V), \]
by \cite[Corollary 2]{Robert:Dim2}. For a general elementary Hilbert $C_0(X)$-module $M$, repeated application of this result shows that
\[ M \iso \bigdsum_{i=1}^{n} C_0(R_{\geq i}(M)). \]
Thus, the isomorphism class of an elementary Hilbert $C_0(X)$-module $M$ depends only on the function $\dim M$.

\begin{lemma}\alabel{ElementarySum}
Let $M$ be a Hilbert $C_0(X)$-module such that $\dim M$ is bounded.
The following are equivalent.

(i) For all $i$, $M|_{R_{=i}(M)}$ has finite type (as a vector bundle).

(ii) There exists an elementary module $N$ such that $M \dsum N$ is also elementary.
\end{lemma}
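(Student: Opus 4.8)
The plan is to prove the two implications separately, treating (ii)$\Rightarrow$(i) as the routine direction and (i)$\Rightarrow$(ii) as the substantive one. Throughout I use that $\dim X<\infty$ (in force in this subsection) and that a countably generated module of constant rank is a locally trivial vector bundle, so that ``finite type'' for $M|_{R_{=i}(M)}$ means it is a direct summand of (equivalently, a quotient of) a trivial bundle.

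For (ii)$\Rightarrow$(i) I would first observe that an elementary module $\bigdsum_{j=1}^m C_0(U_j)$ with the $U_j$ $\sigma$-compact is finitely generated, since each ideal $C_0(U_j)$ is singly generated by a strictly positive element. Hence if $N$ and $M\dsum N$ are elementary, then $M\dsum N$ is finitely generated; as $M$ is an orthogonal direct summand of $M\dsum N$ (with complement $N$), the module $M$ is finitely generated as well. Restricting to $Y:=R_{=i}(M)$, the module $M|_{Y}$ is finitely generated (images of generators generate) and has constant rank $i$, so it is a vector bundle admitting a surjection from a trivial bundle $C_0(Y)^k$; that is, $M|_{Y}$ is of finite type. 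This is exactly (i).

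For (i)$\Rightarrow$(ii) I would use the large-gaps half of Theorem \ref{Dim3-Embedding-Iso} (ii). The goal is to construct an elementary module $N$ so that $M\dsum N$ has (a) a dimension function with gaps of at least $\dim X/2$, and (b) trivial restriction to each of its constant-rank sets. Granting such an $N$, the standard elementary module $P_0:=\bigdsum_l C_0(R_{\geq l}(M\dsum N))$ has the same dimension function as $M\dsum N$ and, being elementary, also has trivial restrictions to its constant-rank sets; since this common dimension function has gaps of at least $\dim X/2$, Theorem \ref{Dim3-Embedding-Iso} (ii) yields $M\dsum N\iso P_0$, so $M\dsum N$ is elementary, which is (ii).

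To build $N$ I would exploit finite type to arrange (b): for each $i$ with $R_{=i}(M)\neq\emptyset$, finiteness of type provides a \emph{finite} open cover $R_{=i}(M)=\bigcup_{k=1}^{r_i}W_{i,k}$ on which $E_i:=M|_{R_{=i}(M)}$ is trivial. I would then take $N$ to be an elementary module, assembled from ideals $C_0(\cdot)$ associated to the $W_{i,k}$ with suitable multiplicities, engineered so that each nonempty stratum $R_{=i}(M)\cap R_{=c}(N)$ lies inside a single trivializing set $W_{i,k}$; on such a stratum $(M\dsum N)$ restricts to $E_i\dsum(\text{trivial})\iso(\text{trivial})$, giving (b). The multiplicities are taken large enough that consecutive attained values of $\dim M+\dim N$ differ by at least $\dim X/2$, giving (a). The construction of $N$ --- refining the strata to the trivializing covers across all $i$ at once while maintaining lower semicontinuity, $\sigma$-compactness, and the gap condition --- is the main obstacle; I would carry it out along the lines of Lemma \ref{ShrinkingLemma} and Proposition \ref{Dim3-DataAttained}, using the identity $C_0(U)\dsum C_0(V)\iso C_0(U\cup V)\dsum C_0(U\cap V)$ to reorganize the sums of ideals as needed.
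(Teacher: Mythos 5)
Your proposal is correct in substance, but in the main direction (i)$\Rightarrow$(ii) it takes a genuinely different route from the paper. The direction (ii)$\Rightarrow$(i) is the same in both treatments up to cosmetics: the paper embeds $M|_{R_{=i}(M)}$ into a trivial bundle (an elementary module sits inside some $C_0(X)^{\dsum n}$) and quotes \cite[Proposition 3.5.8]{husemoller}, while you realize $M|_{R_{=i}(M)}$ as a split quotient of a trivial bundle via finite generation; either way finiteness of type follows. For (i)$\Rightarrow$(ii), both arguments begin the same way, padding $M$ by an elementary module built from finite trivializing covers of the sets $R_{=i}(M)$, but then they diverge. The paper never touches Theorem \ref{Dim3-Embedding-Iso}: after arranging that the padded module is trivial on its constant-rank sets, it proceeds by induction on the maximal fibre dimension $m$, using that $R_{=m}(M)$ is open, that elementary modules are classified by their dimension functions (via $C_0(U)\dsum C_0(V)\iso C_0(U\cup V)\dsum C_0(U\cap V)$, \cite[Corollary 2]{Robert:Dim2}), and the exchange isomorphism of \cite[Proposition 1]{Robert:Dim2} to transfer elementarity from $M|_{R_{\leq m-1}(M)}$ up to $M$. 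You instead inflate the padding so that $\dim(M\dsum N)$ has gaps of at least $\dim X/2$ and then quote the large-gaps case of Theorem \ref{Dim3-Embedding-Iso}(ii) against the elementary module with the same dimension function. Both routes work: the paper's stays inside the elementary-module calculus of \cite{Robert:Dim2} and uses minimal padding (each trivializing set appearing once), whereas yours is conceptually shorter given the heavy Theorem \ref{Dim3-Embedding-Iso}(ii), at the price of larger padding and more delicate bookkeeping.

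On that bookkeeping, one point in your sketch should be made explicit, since it is exactly where the construction could silently fail: condition (b) concerns the constant-rank sets of $M\dsum N$, which a priori are unions of several strata $R_{=i}(M)\cap R_{=c}(N)$ with the same total rank $i+c$, and triviality on each stratum does not pass to such a union. So the ``suitable multiplicities'' must be chosen so that the total rank determines the stratum. This can be arranged: extend the relatively open trivializing sets of $R_{=i}(M)$ to $\sigma$-compact open subsets of $R_{\geq i}(M)$, enumerate them by $k=1,\dots,K$, and give the $k$-th ideal the multiplicity $2^k B$, where $B$ is an integer larger than $\sup\dim M+\dim X/2$. Then $\dim(M\dsum N)(x)$ determines both $\dim M(x)$ (its residue mod $B$) and the exact set of trivializing sets containing $x$; consequently each constant-rank set of $M\dsum N$ is a single stratum lying inside a single trivializing set, which yields (b), and distinct values of $\dim(M\dsum N)$ differ by at least $B-\sup\dim M>\dim X/2$, which yields (a). With this spelled out, your argument is complete.
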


\begin{proof}
(ii) $\Rightarrow$ (i):
If $M \dsum N$ is elementary then in particular, it can be embedded into $C_0(X)^{\dsum n}$ for some $n$.
It follows that $M|_{R_{=i}(M)}$ embeds into a trivial bundle for each $i$.
By \cite[Proposition 3.5.8]{husemoller}, this shows that $M|_{R_{=i}(M)}$ has finite type.

(i) $\Rightarrow$ (ii):
Since $M_{R_{=i}(M)}$ has finite type for all $i$, there are finitely many open sets covering $R_{=i}(M)$ such the restriction to each of these sets is a trivial vector bundle.
To begin, we shall construct $N_0$ such that each constant rank set of $N_0$ (and in fact, of $M \dsum N_0$) is contained in a set where $M$ is trivial.

To do this, let $V_1, \dots, V_{n_1}$ be open sets, contained in $R_{\geq 1}$, such that $M|_{R_{=1} \cap V_i}$ is trivial for each $i = 1, \dots, n_1$.
Likewise, let $V_{n_{k-1} + 1}, \dots, V_{n_k}$ be open sets contained in $R_{\geq k}$ such that $M|_{R_{=k} \cap V_i}$ is trivial for each $i = n_{k-1} + 1, \dots, n_k$. 
Letting $m$ to be the maximum dimension of the fibres of $M$, set
\[ N_0 = \bigdsum_{i=1}^{n_m} C_0(V_i). \]
The constant rank sets of $N_0 \dsum C$ are exactly the same as those of $C$, and each is contained in some set $V_i \cap R_{=k}(M)$, for some $i$ between $n_{k-1}+1$ and $n_k$.
Thus, on each constant rank set, $M \dsum N_0$ corresponds to a trivial vector bundle.
Since $N_0$ is elementary, we have shown that we can reduce to the situation that the restriction of each constant rank set of $M$ is trivial.

Assuming that the restriction of $M$ to each constant rank set is trivial, let us show by induction on the maximum fibre dimension of $M$ that there exists an elementary module $N$ such that $M \dsum N$ is also elementary.
Of course, if $M$ only has fibres of dimension $0$ then $M = 0$.

For the inductive step, suppose that $\dim M$ is bounded by $m$.
By induction, there exists an elementary $C_0(R_{\leq m-1}(M))$-module $N_0$ such that $M|_{R_{\leq m-1}(M)} \dsum N_0$ is elementary.
Let $M'$ be the elementary Hilbert module whose dimension function is the same as that of $M \dsum N_0$.
Since $M'|_{R_{\leq m}(M)} \iso (M \dsum N_0)|_{R_{\leq m}(M)}$, \cite[Proposition 1]{Robert:Dim2} shows that
\[ M' \dsum (M \dsum N_0)|_{R_{=m}(M)} \iso M \dsum N_0 \dsum M'|_{R_{=m}(M)}. \]
The left-hand side is elementary, as is the right-hand summand $N_0 \dsum M'|_{R_{=m}(M)}$, which we may take as $N$.
\end{proof}

\begin{remark}
When $X$ is finite dimensional, all Hilbert $C_0(X)$-modules with bounded fibre dimension satisfy condition (i) of Lemma \ref{ElementarySum} (this follows from \cite[Proposition 3.5.8]{husemoller}.
\end{remark}

\begin{thm}
Let $X$ be a finite dimensional locally compact Hausdorff space, and let $\hat X$ be its one-point compactification.
Then $K_0^*(C_0(X))$ may be identified with the group of bounded maps $f\colon\hat X \to \mathbb{Z}$ satisfying $f(\infty)=0$ and for which $f^{-1}(\{i\})$ is the difference of two $\sigma$-compact open sets, for all $i$.
If $X$ is $\sigma$-compact, then $K_0^*(C_0(X))$ may simply be identified with the group of bounded maps 
$f\colon X \to \mathbb{Z}$ for which $f^{-1}(\{i\})$ is the difference of two $\sigma$-compact open sets, for all $i$.
\end{thm}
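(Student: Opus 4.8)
The plan is to compute $K_0^*(C_0(X))$ as the Grothendieck group of the unstabilized Cuntz semigroup $W(C_0(X))$, using the structural results already established. The key simplification is that elements of $W(C_0(X))$ are represented by finitely generated submodules of $C_0(X)^n$, hence have bounded fibre dimension; and since $X$ is finite dimensional, the remark following Lemma \ref{ElementarySum} shows every such module satisfies condition (i), so it is stably elementary. The essential consequence I want to extract is that \emph{in the Grothendieck group, every element is a difference of classes of elementary modules}: given $[M]$, Lemma \ref{ElementarySum} produces an elementary $N$ with $M \dsum N$ elementary, so $[M] = [M\dsum N] - [N]$ is a difference of elementary classes. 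Since the isomorphism class of an elementary module depends only on its dimension function (by the discussion preceding Lemma \ref{ElementarySum}, using $C_0(U)\dsum C_0(V)\iso C_0(U\cup V)\dsum C_0(U\cap V)$ and $M\iso\bigdsum C_0(R_{\geq i}(M))$), the class of an elementary module in $K_0^*$ is captured entirely by its $\dim$ function.

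Next I would set up the target group and the map. For a finitely generated module $M$, the function $\dim M\colon X \to \mathbb{N}$ is bounded, lower semicontinuous in the $\sigma$-compact sense, and vanishes at infinity in the appropriate way; define $\Psi([M]-[M'])= \dim M - \dim M'$. I would check this is a well-defined group homomorphism into the group $G$ of bounded integer-valued functions $f$ (on $\hat X$ with $f(\infty)=0$, or on $X$ in the $\sigma$-compact case) with each level set $f^{-1}(\{i\})$ a difference of two $\sigma$-compact open sets. The level-set condition is forced because $\dim M$ takes the form $\rank a$ for $a\in C_0(X,\K)^+$, whose sets $R_{\geq i}$ are $\sigma$-compact and open, so $R_{=i}=R_{\geq i}\setminus R_{\geq i+1}$ is a difference of such sets; differences of two functions of this form stay inside $G$.

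Injectivity is where the content lies: I must show that if $\dim M = \dim M'$ for finitely generated $M,M'$, then $[M]=[M']$ in $K_0^*$. Here cancellation does the work. By the stably-elementary property, after adding elementary modules I may assume $M,M'$ are themselves elementary (possibly after a further common elementary summand), and then equal dimension functions force $M\iso M'$ by the dimension-only classification of elementary modules; hence $[M]=[M']$ already at the level of the enveloping group. This is the step that "destroys both types of non-triviality'': the constant-rank Chern-class data and the clutching data of Example \ref{clutching-sphere} both disappear because adding a trivial summand makes the module elementary and elementary modules are determined by dimension alone.

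For surjectivity of $\Psi$, given $f\in G$ I would write $f=f_+-f_-$ with $f_\pm$ bounded nonnegative integer functions whose level sets are still differences of $\sigma$-compact open sets; realize each $f_\pm$ as the dimension function of an elementary module $N_\pm = \bigdsum_i C_0(f_\pm^{-1}(\{j : j\geq i\}))$, which is legitimate precisely because the superlevel sets are $\sigma$-compact and open (here the level-set hypothesis on $f$ is exactly what guarantees the requisite open sets exist); then $\Psi([N_+]-[N_-])=f$. The main obstacle I anticipate is the bookkeeping at infinity: ensuring that the one-point-compactification condition $f(\infty)=0$ matches the vanishing-at-infinity of dimension functions of genuine $C_0(X)$-modules, and checking that ``difference of two $\sigma$-compact open sets'' is both necessary (it comes from $R_{\geq i}$ being open $\sigma$-compact) and sufficient (it is exactly what Lemma \ref{ElementarySum} and the elementary-module construction can realize). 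The $\sigma$-compact case is cleaner because $X$ itself is then a legitimate index and no compactification subtlety arises.
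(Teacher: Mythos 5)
Your overall strategy coincides with the paper's: identify $K_0^*(C_0(X))$ with the group generated by dimension functions of finitely generated modules, with injectivity coming from Lemma \ref{ElementarySum} exactly as in the paper (adding a common elementary summand $N \oplus N'$ to both modules makes them elementary, and elementary modules are classified by their dimension functions). That part of your argument is sound, including the observation that finitely generated modules have bounded fibre dimension and, since $X$ is finite dimensional, are stably elementary.

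The gap is in surjectivity. You propose to write $f = f_+ - f_-$ with $f_\pm$ bounded nonnegative functions \emph{whose level sets are differences of $\sigma$-compact open sets}, and then to realize each $f_\pm$ as the dimension function of an elementary module $\bigoplus_i C_0\left(f_\pm^{-1}(\{j : j \geq i\})\right)$. That construction requires the superlevel sets $f_\pm^{-1}(\{j : j \geq i\})$ to be \emph{open} and $\sigma$-compact, i.e.\ it requires $f_\pm \in \mathrm{Lsc}_\sigma(X,\mathbb{N})$; the level-set condition you impose on $f_\pm$ does not imply this, and the parenthetical claim that ``the level-set hypothesis on $f$ is exactly what guarantees the requisite open sets exist'' is a non sequitur. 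Concretely, take $X = \mathbb{R}$ and $f = \chi_{[0,1)}$. Then $f$ lies in the target group ($[0,1) = (-1,1)\setminus(-1,0)$ and $\mathbb{R}\setminus [0,1)$ is also a difference of $\sigma$-compact open sets), and since $f \geq 0$ the natural choice $f_+ = f$, $f_- = 0$ satisfies everything you asked of it; yet $f_+^{-1}(\{j : j \geq 1\}) = [0,1)$ is not open, so $f_+$ is not the dimension function of any Hilbert module and your module $N_+$ does not exist. A decomposition with both parts genuinely lower semicontinuous does exist, but producing it is precisely the missing step. The paper's route is to write $f = \sum_i i\,\chi_{f^{-1}(\{i\})}$ (a finite sum, by boundedness), realize each generator $\chi_{U\setminus V}$ as $\dim C_0(U) - \dim C_0(U\cap V)$---in the example, $\chi_{[0,1)} = \dim C_0((-1,1)) - \dim C_0((-1,0))$---and use that the set of differences of dimension functions is closed under addition (which rests on the fact that differences of $\sigma$-compact open sets are closed under finite unions and intersections, a point you also assert but do not prove). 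Inserting this argument repairs your proof; everything else matches the paper.
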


\begin{proof}
For finitely generated Hilbert modules $M$ and $M'$, we have that $[M] = [M']$ in $K_0^*(C_0(X))$ if and only if $M \dsum N \CEIeq M' \dsum N$ for some finitely generated Hilbert module $N$.
Clearly, this can only happen if $\dim M = \dim M'$, and Lemma \ref{ElementarySum} shows that $\dim M = \dim M'$ is sufficient.
So, $K_0^*(C_0(X))$ can be identified with the group generated by functions $\dim M$ where $M$ is a finitely generated Hilbert $C_0(X)$-module.
Such functions are exactly the bounded, lower semicontinuous maps $f\colon X \to \mathbb{N}$ for which $f^{-1}(\{i, i+1, \dots\})$ is open and $\sigma$-compact for all $i \geq 1$.
In particular, such $f$ satisfies the condition that $f^{-1}(\{i\})$ is the difference of two $\sigma$-compact open sets for all $i \geq 1$.
Moreover, if we view $f$ as a function on $\hat X$ by setting $f(\infty) = 0$ then $f^{-1}(\{0\})$ is also the difference of two $\sigma$-compact open sets.
Thus, $f$ is a function as in the statement above.

Let us now check that the set of functions described forms a group---that is, that it is closed under addition.
Suppose that for $t=1,2$, we have $f_t\colon\hat X \to \mathbb{Z}$ such that $f_t^{-1}(\{i\})$ is the difference of two $\sigma$-compact open sets, and that both functions are bounded between $-K$ and $K$.
Then for each $i$,
\[ (f_1 + f_2)^{-1}(\{i\}) = \bigcup_{j=-K}^K f_1^{-1}(\{j\}) \cap f_2^{-1}(\{i-j\}). \]
The family of $\sigma$-compact open sets is closed under finite intersections and unions, and thus so is the family of sets which are the difference of two $\sigma$-compact open sets.
Hence, $(f_1 + f_2)^{-1}(\{i\})$ is the difference of two $\sigma$-compact open sets, so that $f_1 + f_2$ does lie in the set described.

Finally, let us show that every function described does occur in $K_0^*(C_0(X))$.
For this, it suffices to show that for every set $Y$ which is the difference of two $\sigma$-compact open sets, $\chii_Y$ occurs as $\dim M - \dim N$ for some countably generated Hilbert $C_0(X)$-modules $M,N$.
This is clear, since if $Y = U \setminus V$ where $U,V$ are $\sigma$-compact and open, then $M = C_0(U)$ and $N = C_0(U \cap V)$ will work.
\end{proof}

\subsection{An absorption theorem\alabel{Absorption}}

In this section, we shall prove the following.

\begin{thm}\alabel{OpenAbsorption}
Let $U$ be a $\sigma$-compact open subset of $X$ and
let $M$ be a countably generated Hilbert $C_0(X)$-module. Suppose that $M|_U\cong \ltwo(U)$. 
Then $M\cong M\oplus \ltwo(U)$.
\end{thm}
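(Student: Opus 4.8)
The plan is to work with the pointwise picture developed in Section \ref{DimFinite}. Write $M \iso M_p$ for a range projection $p \in \RP(X)$, and let $q \in \RP(X)$ be the range projection of $\ltwo(U)$, i.e.\ $q(x) = 1_{\mathcal B(\ltwo)}$ for $x \in U$ and $q(x) = 0$ otherwise; this is indeed in $\RP(X)$ since $U$ is $\sigma$-compact and open. The goal becomes to show $p \cong p \vee q'$ where $q'$ is a copy of $q$ orthogonal to $p$, which in module language is the statement $M \cong M \dsum \ltwo(U)$. The hypothesis $M|_U \cong \ltwo(U)$ tells us that $p|_U \cong q|_U$, i.e.\ $\rank p(x) = \infty$ for all $x \in U$, so that $U \subseteq R_{=\infty}(p)$.

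First I would reduce to building an explicit Murray--von Neumann equivalence between the range projections of $M$ and of $M \dsum \ltwo(U)$ via a rank-ordered family of partial isometries, using Proposition \ref{ROF-Characterization}(ii). The natural strategy is a \emph{shift} or \emph{Hilbert-hotel} argument: since $M$ has infinite-dimensional fibres everywhere on $U$, one can absorb an extra copy of $\ltwo$ over $U$ without changing the isomorphism class, by splitting off, over $U$, a subcopy of $\ltwo(U)$ inside $M|_U \cong \ltwo(U)$ and shifting it out of the way. Concretely, fix an isometry-valued shift that is the identity off $U$ and, on $U$, implements $\ltwo(U) \cong \ltwo(U) \dsum \ltwo(U)$; the difficulty is to make this shift continuous across the boundary $\partial U$, where fibres of $M$ may drop to finite dimension.

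The key technical step, and the one I expect to be the main obstacle, is precisely this continuity across $\partial U$: a pointwise unitary $U \to \mathcal B(\ltwo)$ giving $\ltwo \cong \ltwo \dsum \ltwo$ need not extend continuously to $\closure U$, and in general there is no control on $M$ near $\partial U$. To handle this I would exhaust $U$ by an increasing sequence of relatively compact open sets $U_1 \subseteq \closure{U_1} \subseteq U_2 \subseteq \cdots$ with $\bigcup U_n = U$ (possible since $U$ is $\sigma$-compact and locally compact), choose Urysohn functions interpolating between the $U_n$, and build the absorbing partial isometry as a sum $\sum_n (\lambda_n^{1/2}\, s_n)$ of partial isometries $s_n$ supported on $U_{n+1} \setminus \closure{U_{n-1}}$, arranged so that their ranges are mutually orthogonal and their sum is the identity on $M$ plus one fresh copy of $\ltwo(U)$. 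Because each $s_n$ is compactly supported strictly inside $U$, continuity at $\partial U$ is automatic (everything vanishes there), and the partial-isometry relations hold pointwise by construction. Assembling these into a rank-ordered family of partial isometries and invoking Proposition \ref{ROF-Characterization}(ii) then yields the desired $v \in \PIPD(X)$ with $v^*v = p$ and $vv^* = p \dsum q$, proving $M \cong M \dsum \ltwo(U)$.

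The verification that the strong-operator limit defining $\bigvee v_i$ exists and gives a genuine partial isometry in $\PIPD(X)$ reduces to checking the compatibility condition \eqref{ROF-PIso-Compatibility}, which the telescoping construction makes transparent; I would carry that out pointwise on fibres, separately on the region where $M$ has infinite-dimensional fibres (where the Hilbert-hotel shift operates) and off $\closure U$ (where $v$ is simply the identity on $p$).
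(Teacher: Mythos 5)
Your proposal correctly isolates the crux of the theorem---continuity of the absorbing partial isometry across $\partial U$---but the step you offer to resolve it does not work, and the sentence ``continuity at $\partial U$ is automatic (everything vanishes there)'' is exactly where the argument breaks. What vanishes at $\partial U$ is the fresh copy $q$, not the object you must build: you need $v$ with $v^*v=p$, and $p$ need not vanish on $\partial U$ or on $X\setminus U$ (take $M=\ltwo(X)$ with $U$ a proper open subset, or the submodule of $\ltwo(C([0,1]))$ of elements whose value at $0$ lies in $\C^5$, with $U=(0,1]$). Since each $s_n$ is compactly supported strictly inside $U$, the element $t=\sum_n\lambda_n^{1/2}s_n$ of $C_0(X,\K)$ vanishes on $X\setminus U$, so the initial projection of its pointwise polar part is supported in $U$ and can never equal $p$. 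Gluing in ``the identity on $p$'' off $\closure{U}$ is then not a repair but a restatement of the original problem: for the glued family to lie in $\PIPD(X)$ it must, by Proposition \ref{ROF-Characterization}(ii), admit a rank-ordered family of partial isometries, and by Definition \ref{ROF-Defn}(2) the sets of such a family contain neighbourhoods of points of $\partial U$, so the finite-rank members $v_i$ are norm-continuous across the boundary. This forces a genuine constraint that your construction never addresses: if $v$ is the pointwise polar part of $t\in C_0(X,\K)$ and $v(x_0)=p(x_0)$ at $x_0\in\partial U$, then norm-continuity of $t$ and of $|t|$ gives $v(x)\,|t(x_0)|\to|t(x_0)|$ in norm as $x\to x_0$; no fixed isometry $S$ of $p(x_0)\ltwo$ onto $(p\oplus q)(x)\ltwo$ can satisfy $S\,|t(x_0)|=|t(x_0)|$ (that would force $S$ to be the identity on the dense range of $|t(x_0)|$, contradicting surjectivity onto the larger space). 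So the Hilbert-hotel shift cannot be locally rigid near $\partial U$: it must become asymptotically trivial on every continuous finite-rank subprojection of $p$ as one approaches the boundary, at a rate dictated by the unknown structure of $p$ there. Arranging this is the actual content of the theorem, and your annular pieces $s_n$, ``arranged so that their ranges are mutually orthogonal,'' are simply posited to have this boundary behaviour rather than constructed.

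The paper's proof avoids building any continuous shift across $\partial U$. It first reduces to showing that $\ltwo(U)$ is isomorphic to a \emph{direct summand} of $M$: if $M\cong M'\oplus\ltwo(U)$ then $M\oplus\ltwo(U)\cong M'\oplus\ltwo(U)\oplus\ltwo(U)\cong M'\oplus\ltwo(U)\cong M$. The naive candidate $MC_0(U)\cong\ltwo(U)$ is in general \emph{not} complemented in $M$---that is the same boundary obstruction---so instead the paper exhausts $U$ by open sets $V_i$ compactly contained in $U$; Lemma \ref{complemented} shows that submodules of $MC_0(V_i)$ complemented in $MC_0(U)$ are complemented in all of $M$, because compact containment allows a cutoff function separating $V_i$ from $\partial U$. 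Then, inside $\ltwo(V_i)\cong\ltwo(V_i)^{\oplus\infty}$, one chooses a copy $M_i$ onto which a fixed generating sequence of $M$ projects with norm at most $2^{-i}$; this summability is what substitutes for your boundary continuity, and Lemma \ref{complementedsum} shows that $\closure{\sum_i M_i}\cong\bigoplus_i\ltwo(V_i)\cong\ltwo(U)$ is complemented in $M$. If you insist on the pointwise picture, you would need an inductive finite-rank extension argument across $\partial U$ in the spirit of the proof of Theorem \ref{DimFinite-Embedding}, not a single global shift; as written, your proposal has a genuine gap at its central step.
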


Before proving the theorem we need two simple lemmas.

\begin{lemma}\alabel{complemented}
Let $M$ be a countably generated Hilbert $C_0(X)$-module, and let $U,V$ be $\sigma$-compact open sets with $V$ compactly contained in $U$.
If $F$ be a submodule of $MC_0(V)$ that is a direct summand of $MC_0(U)$ then $F$ is a direct summand of $M$.  
\end{lemma}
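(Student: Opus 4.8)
The plan is to exploit the compact containment $V \subseteq\!\subseteq U$ to produce a global compact operator on $M$ that restricts to a projection onto $F$ over $V$, and then use the existence of a strictly positive element to localize near the support of $F$. First I would recall that $F$ being a direct summand of $MC_0(U)$ means there is a projection $P \in K(MC_0(U))$ with range $F$; since $F \subseteq MC_0(V)$, one checks $P = PC_0(V) = C_0(V)P$, so $P$ is supported on $V$. The key point is that $V$ is compactly contained in $U$: let $g \in C_0(U)^+$ be a function that equals $1$ on $\overline V$ (which is compact and contained in $U$), so that $gP = Pg = P$. Because $g$ vanishes outside a compact subset of $U$, the operator $gPg = P$, viewed through the inclusion $K(MC_0(U)) \hookrightarrow K(M)$ (which makes sense since $MC_0(U)$ is a submodule and $P$ is cut down by a compactly-supported function), extends to a genuine element of $K(M)$.

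The central step is to verify that this extended operator, call it $\tilde P$, is still a projection in $K(M)$ whose range is exactly $F$, and that its complement $1 - \tilde P$ (in the multiplier sense) gives a complementary submodule. Concretely I would argue as follows: for $m \in M$, the product $Pm := P(mg)$ is well-defined and lands in $F$; one checks $\tilde P$ defined this way is idempotent and self-adjoint because $P$ acts as the identity on $F \subseteq MC_0(V)$ and $g \equiv 1$ on the relevant support, so the cutoff by $g$ does not interfere with the algebraic relations $P^2 = P = P^*$. Then $M = \tilde P M \oplus (1-\tilde P)M = F \oplus F^\perp$, where $F^\perp := \{m \in M : \langle f, m\rangle = 0 \text{ for all } f \in F\}$ is an honest Hilbert submodule and $\tilde P M = F$ since $P$ restricts to the identity on $F$.

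The main obstacle I anticipate is the bookkeeping in passing $P$ from the algebra $K(MC_0(U))$ to $K(M)$: there is no unital inclusion $K(MC_0(U)) \hookrightarrow K(M)$ in general, so one cannot simply regard $P$ as living in $K(M)$. The compact containment is exactly what rescues this: by choosing $g \in C_0(U)$ with $g|_{\overline V} = 1$, the operator $P$, being supported on $V$, satisfies $P = \theta g P g$ for the appropriate rank-one-type multiplications, and $gPg$ does define a compact operator on all of $M$ because multiplication by $g$ maps $M$ into $MC_0(U)$. Verifying that this extension preserves both the projection relations and the range $F$ is the delicate part, and I would handle it by testing against elements of the dense submodule $MC_0(V)$, on which $\tilde P$ and $P$ agree identically. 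Once $\tilde P \in K(M)$ is a projection with range $F$, the conclusion that $F$ is a direct summand of $M$ is immediate from the standard fact that a submodule is complemented in $M$ precisely when it is the range of a projection in $K(M)$ (or in the adjointable operators on $M$).
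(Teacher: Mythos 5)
Your core construction is right, but it is built on a false premise that must be removed: being a direct summand of $MC_0(U)$ does \emph{not} provide a projection $P \in K(MC_0(U))$ with range $F$. A closed submodule is orthogonally complemented if and only if it is the range of a projection in the algebra $\mathcal{L}(MC_0(U))$ of bounded \emph{adjointable} operators; there is no reason for that projection to be compact, and in the very situation where the paper applies this lemma (the proof of Theorem \ref{OpenAbsorption}) it cannot be: there $F \iso \ltwo(V)$, and $K(\ltwo(V)) \iso C_0(V) \tens \mathcal{K}$ has no unit, whereas for any projection $Q \in K(N)$ the corner $QK(N)Q = K(QN)$ is unital with unit $Q$. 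For the same reason your closing ``standard fact'' is wrong as stated with $K(M)$ --- only the parenthetical version with adjointable operators is correct --- and the whole discussion of extending compact operators through an inclusion $K(MC_0(U)) \hookrightarrow K(M)$ is aimed at the wrong objects.

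Fortunately compactness plays no role in your argument, which survives once $P$ is taken to be the adjointable orthogonal projection of $MC_0(U)$ onto $F$: for $m \in M$ one has $mg \in MC_0(U)$, so $\tilde{P}m := P(mg)$ is defined; $\tilde{P}f = f$ for $f \in F$ since $fg = f$ for every $f \in MC_0(V)$; $\tilde{P}$ is idempotent since $P(mg) \in MC_0(V)$ gives $P(mg)g = P(mg)$; and $\tilde{P}$ is self-adjoint because inner products against elements of $MC_0(V)$ lie in $C_0(V)$, on which $g$ acts as the identity (note also that adjointable operators are automatically module maps, so the commutation of $P$ with multiplication by $g$ is free). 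This yields an adjointable projection with range $F$, hence $M = F \oplus (1-\tilde{P})M$. For comparison, the paper reaches the conclusion with no operators at all: writing $m = m(1-\lambda) + m\lambda$ with $\lambda \in C_0(U)$ equal to $1$ on $V$, the first summand is orthogonal to $MC_0(V) \supseteq F$, hence lies in $F^\perp$, while the second lies in $MC_0(U)$ and splits into an $F$ part and an $F^\perp$ part by hypothesis. Both proofs turn on the same cutoff function; the elementwise version simply avoids the adjointable-versus-compact bookkeeping that tripped you up.
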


\begin{proof}
Let us show that every $m\in M$ decomposes into the sum of one
element in $F$ and one in $F^{\perp}$.
Let $\lambda\in C_0(U)$ be such that $\lambda(x)=1$ on $V$.
Then $m=m(1-\lambda)+m\lambda$. The first
summand is orthogonal to $HC_0(V)$, whence belongs to $F^{\perp}$. The second summand 
belongs to $HC_0(U)$, and since $F$ is complemented in $HC_0(U)$, decomposes into the
sum of an element in $F$ and one in $F^{\perp}$.  
\end{proof}

\begin{lemma}\label{complementedsum}
Let $M$ be a countably generated Hilbert $C_0(X)$-module, let $(M_i)_{i=1}^\infty$ be a sequence of pairwise orthogonal, countably generated, such that $M_i+M_i^{\perp}=M$ for all $i$.
Suppose that for a sequence of generators  $(\xi_i)_{i=1}^\infty$ of $M$
we have that the series $\sum_{k=1}^\infty \xi_i^{k}$ is convergent for all $i$, 
where $\xi_i^{k}$ denotes the projection of $\xi_i$ onto $M_k$. Then the
submodule $\overline{\sum_{k=1}^\infty M_k}$ is a direct summand of $M$.
\end{lemma}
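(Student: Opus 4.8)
The plan is to realize the submodule $N := \overline{\sum_{k=1}^\infty M_k}$ as the range of an adjointable projection obtained as the pointwise limit of the partial sums of the projections onto the $M_k$. Since each $M_k$ is complemented (i.e.\ $M_k + M_k^\perp = M$), there is an adjointable projection $P_k$ onto $M_k$, with $\ker P_k = M_k^\perp$. Pairwise orthogonality of the $M_k$ gives $M_\ell \subseteq M_k^\perp = \ker P_k$ for $\ell \neq k$, so $P_k P_\ell = 0$ whenever $k \neq \ell$. Consequently each partial sum $Q_n := \sum_{k=1}^n P_k$ is again an adjointable projection; the sequence $(Q_n)$ is increasing and $\|Q_n\| \leq 1$ for all $n$. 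For any $m \in M$, the partial sums of $\sum_k P_k m$ are exactly the $Q_n m$, and $P_k m$ is the projection of $m$ onto $M_k$; thus the hypothesis says precisely that $(Q_n \xi_i)_n$ converges for each generator $\xi_i$.

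The next step is to extend this convergence to a dense subspace. As each $P_k$ is $C_0(X)$-linear, $Q_n(\xi_i \lambda) = (Q_n \xi_i)\lambda$ for $\lambda \in C_0(X)$, so $(Q_n(\xi_i\lambda))_n$ converges; by linearity $(Q_n m)_n$ then converges for every $m$ in the subspace $D$ of finite $C_0(X)$-combinations of the $\xi_i$, which is dense in $M$.

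The key step is to promote convergence on $D$ to convergence on all of $M$, and here the uniform bound $\|Q_n\| \leq 1$ does the work. Given $m \in M$ and $\varepsilon > 0$, choose $m' \in D$ with $\|m - m'\| < \varepsilon/3$; then for all $n, n'$,
\[ \|Q_n m - Q_{n'} m\| \leq \|Q_n(m - m')\| + \|Q_n m' - Q_{n'}m'\| + \|Q_{n'}(m' - m)\| \leq 2\|m - m'\| + \|Q_n m' - Q_{n'} m'\|. \]
Since $(Q_n m')_n$ converges, the middle term is eventually $< \varepsilon/3$, so $(Q_n m)_n$ is Cauchy and hence convergent. This defines a map $Pm := \lim_n Q_n m$ on all of $M$, with $\|P\| \leq 1$.

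Finally I would verify that $P$ is an adjointable projection with range $N$. Adjointability and self-adjointness come from $\langle Pm, m'\rangle = \lim_n \langle Q_n m, m'\rangle = \lim_n \langle m, Q_n m'\rangle = \langle m, Pm'\rangle$. For idempotence, fix $\ell$: since $Q_\ell Q_n = Q_{\min(\ell,n)} = Q_\ell$ for $n \geq \ell$, continuity of $Q_\ell$ gives $Q_\ell Pm = \lim_n Q_\ell Q_n m = Q_\ell m$, and letting $\ell \to \infty$ yields $P(Pm) = \lim_\ell Q_\ell m = Pm$. For the range, each $m \in M_k$ satisfies $Q_n m = m$ for $n \geq k$, so $Pm = m$ and $M_k \subseteq \operatorname{ran} P$, whence $N \subseteq \operatorname{ran} P$; conversely $Q_n m \in \sum_{k \leq n} M_k \subseteq N$ for every $m$, so $Pm \in N$. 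Thus $\operatorname{ran} P = N$, and since $P$ is an adjointable projection, $M = N \oplus N^\perp$, exhibiting $N$ as a direct summand. I expect the only genuine obstacle to be the third paragraph; the point to get right is that the partial-sum operators are uniformly contractive because they are projections, which is exactly what lets the standard $\varepsilon/3$ density argument succeed despite convergence being known only on $D$.
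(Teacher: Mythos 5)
Your proof is correct. It differs from the paper's argument mainly in how the passage from generators to all of $M$ is handled, and in what is ultimately constructed. The paper works at the level of vectors: the hypothesis gives the decomposition $\xi_i = \sum_k \xi_i^k + \bigl(\xi_i - \sum_k \xi_i^k\bigr)$, where the first term lies in $N := \overline{\sum_k M_k}$ and the second is orthogonal to every $M_k$, hence to $N$; one then concludes because $N + N^\perp$ is a closed submodule of $M$ containing all generators, so it equals $M$ (closedness rests on the estimate $\|n+p\| \geq \max(\|n\|,\|p\|)$ for orthogonal $n,p$, which follows from monotonicity of the norm on positive elements of $C_0(X)$ -- this is what justifies the paper's phrase ``taking linear combinations and passing to limits''). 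You instead work at the level of operators: you assemble the adjointable projections $P_k$ into partial sums $Q_n$, note that these are again projections and hence uniformly contractive, run the $\varepsilon/3$ equicontinuity argument to get strong convergence of $(Q_n)$ on all of $M$, and verify that the limit $P$ is an adjointable projection with range $N$. The paper's route is shorter but leaves the key closedness fact implicit; your route is longer but fully explicit, and it delivers slightly more than the statement asks for, since the complementing projection onto $N$ is exhibited constructively as the strong limit of $\sum_{k \leq n} P_k$, giving the decomposition $m = Pm + (1-P)m$ for every $m \in M$.
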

\begin{proof}
It is easily verified that the vector $\xi_i-\sum_{k=1}^\infty \xi_i^{k}$ is orthogonal 
to $M_k$ for all $k$. Thus, $\xi_i-\sum_{k=1}^\infty \xi_i^{k}$ is orthogonal to 
$\overline{\sum_{k=1}^\infty M_k}$. This shows that each of the vectors $\xi_i$ can be decomposed
in a sum of an element in $\overline{\sum_{k=1}^\infty M_k}$ and one orthogonal to $\overline{\sum_{k=1}^\infty M_k}$.
Taking linear combinations and passing to limits we get the same for all the vectors of $M$. 
\end{proof}

\begin{proof}[Proof of Theorem \ref{OpenAbsorption}]
It is enough to show that $\ltwo(U)$ is isomorphic to a direct summand
of $M$, for if $M\cong M'\oplus \ltwo(U)$ then adding $\ltwo(U)$ on both sides
we get $M\oplus \ltwo(U)\cong M'\oplus \ltwo(U)\oplus \ltwo(U)\cong M'\oplus \ltwo(U)\cong M$.

Let $(V_i)_{i=1}^\infty$ be an increasing sequence of open sets
compactly contained in $U$ and such that $U=\bigcup_i V_i$. Let $(\xi_i)_{i=1}^\infty$ be a sequence of
generators of $M$. We modify these generators as follows: define $\widetilde \xi_i=\xi_i(1-\lambda_i)$, where
$\lambda_i\in C_0(U)$ is equal to 1 on $V_i$. The new vectors $\widetilde\xi_i$ satisfy that 
$\widetilde\xi_j\perp MC_0(V_i)$ if $i\leq j$, and $M$ is spanned by $\{\widetilde \xi_1,\widetilde \xi_2,\dots\}\cup MC_0(U)$.  

Let us identify  $MC_0(U)$ with $\ltwo(U)$. We have  that $\ltwo(U)\cong \bigoplus_{i=1}^\infty \ltwo(V_i)$.
Notice that by Lemma \ref{complemented} each of the modules $\ltwo(V_i)$ is complemented in $M$.
 Choose an open set $V_i$. Consider the orthogonal projections of the vectors $(\widetilde\xi_j)_{j=1}^\infty$ onto
$\ltwo(V_i)$. Only a finite number of them are non-zero. By further decomposing $\ltwo(V_i)$ into a countable sum
of submodules, all isomorphic to $\ltwo(V_i)$, we can choose one of those summands such that the projections
of all the vectors $(\widetilde\xi_j)_{j=1}^\infty$ onto that summand have norm at most $\frac{1}{2^i}$ (and only finitely many 
are non-zero). Denote this submodule by $M_i$. Performing this construction for every $i$ we obtain a sequence of sumbmodules $(M_i)_{i=1}^\infty$ of $MC_0(U)$, such that $M_i\cong \ltwo(V_i)$ and $M_i$ is complemented in $M$ for all $i$, and 
the series $\sum_i \widetilde \xi_j^i$, of projections  of the vectors $\xi_j$ onto the $M_i$'s, is convergent for all $j$.
This is also true for all the vectors in $MC_0(U)$, since by construction $\overline{\sum_{i=1}^\infty M_i}$ is 
complemented in $MC_0(U)$. It follows by Lemma \ref{complementedsum} that  $\overline{\sum_{i=1}^\infty M_i}$
is complemented in $M$. Since $M_i\cong \ltwo(V_i)$ for all $i$, we have $\overline{\sum_{i=1}^\infty M_i}\cong \ltwo(U)$. This
completes the proof.
\end{proof}

\begin{cor}
Let $M$ and $N$ be $C_0(X)$-modules such that $M|_{U}\cong N|_{U}\cong \ltwo(U)$ and 
let $\phi\colon M|_{X\backslash U}\to N|_{X\backslash U}$ be an isomorphism of Hilbert modules. 
Then there is $\psi\colon M\to N$, isomorphism of Hilbert modules, such that $\psi|_{X\backslash U}=\phi$.
\end{cor}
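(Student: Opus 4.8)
The plan is to reduce the statement to an extension problem for partial isometries, and then to use Theorem \ref{OpenAbsorption} to manufacture the infinite-dimensional room needed to carry out that extension. Write $M = M_p$ and $N = M_q$ for $p,q \in \RP(X)$, put $F = X \setminus U$, and write the given isomorphism as $\phi = \phi_v$ for some $v \in \PIPD(F)$ with $v^*v = p|_F$ and $vv^* = q|_F$. The hypothesis $M|_U \iso \ltwo(U) \iso N|_U$ says exactly that $p|_U$ and $q|_U$ are both Murray--von Neumann equivalent to the constant, full (rank $\infty$) projection $1_U$. Thus the goal becomes: extend $v$ to $\tilde v \in \PIPD(X)$ with $\tilde v|_F = v$, $\tilde v^*\tilde v = p$ and $\tilde v \tilde v^* = q$.

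First I would create room. By Theorem \ref{OpenAbsorption} we have $N \iso N \dsum \ltwo(U)$, and this isomorphism can be taken to restrict to the identity on $N|_F$: in the proof of that theorem the absorbed copy of $\ltwo(U)$ is a complemented submodule of $\overline{NC_0(U)}$, so it vanishes on $F$, and the swindle implementing the isomorphism fixes a complement carrying $N|_F$. Consequently it suffices to extend $\phi$ to an isomorphism $M \to N \dsum \ltwo(U)$ restricting to $\phi$ on $F$; in projection language, replacing $q$ by $q' := q \dsum 1_U$, it suffices to produce $\tilde v \in \PIPD(X)$ with $\tilde v|_F = v$, $\tilde v^*\tilde v = p$ and $\tilde v \tilde v^* = q'$. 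This is consistent on $F$ because the extra summand $1_U$ vanishes there, so $q'|_F = q|_F = vv^*$; the gain is that $q'|_U = q|_U \dsum 1_U$ now carries a full infinite projection orthogonal to everything occurring near $F$, and $p|_U \cong q'|_U$ via $\ltwo \dsum \ltwo \iso \ltwo$.

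The main work is the extension across $U$. I would first extend $v$ by a routine neighbourhood (Tietze-type) argument to a continuous partial isometry $v'$ on a closed neighbourhood $\overline{O}$ of $F$, with $(v')^*v' = p$ and $v'(v')^* \leq q'$. On $U$ there is a global trivialization, i.e.\ a partial isometry with source $p|_U$ and full range $q'|_U$, since $p|_U \cong q'|_U$. The task is to interpolate continuously between $v'$ near $F$ and such a trivialization over the rest of $U$, arriving at a $\tilde v$ of full range $q'$ that still equals $v$ on $F$. I expect this to be the crux: the transition between the two partial isometries is governed by the unitary group of the infinite-dimensional fibre, and the extra full summand $1_U$ in $q'|_U$ supplies a second orthogonal infinite copy into which $v'$ can be rotated by the standard $2 \times 2$ rotation homotopy. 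Because this homotopy can be scaled by a function equal to $1$ deep inside $U$ and $0$ on $\partial U \subseteq F$, the rotation dies out at the boundary and the result glues continuously to $v$; the infinite-dimensionality (connectedness of the fibre's unitary group) is precisely what removes any clutching obstruction here.

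Finally, the resulting $\tilde v$ gives an isomorphism $M \xrightarrow{\;\iso\;} N \dsum \ltwo(U)$ which restricts to $\phi$ on $F$; composing with the absorption isomorphism $N \dsum \ltwo(U) \iso N$, which restricts to the identity on $N|_F$, produces the desired $\psi \colon M \to N$ with $\psi|_{X \setminus U} = \phi$. The point demanding care throughout is the bookkeeping of restrictions to $F$, to ensure each isomorphism used acts as $\phi$ (or the identity) there; the genuine obstacle is the continuous gluing over $U$ in the third paragraph, which is where Theorem \ref{OpenAbsorption} and the triviality of $\ltwo(U)$ do the essential work.
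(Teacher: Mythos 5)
Your opening reduction is exactly the paper's: Theorem \ref{OpenAbsorption} yields $M \iso M \dsum \ltwo(U)$ and $N \iso N \dsum \ltwo(U)$ via isomorphisms fixing the restrictions to $X \setminus U$, since the absorbed copy of $\ltwo(U)$ lies inside $\overline{NC_0(U)}$ and hence vanishes on $X \setminus U$. But at the crux the paper does no fibrewise extension or interpolation at all: it invokes \cite[Theorem 2]{CiupercaRobertSantiago}, the generalized Kasparov stabilization theorem applied to the ideal $C_0(U)$ of $C_0(X)$, which lifts the isomorphism $\phi$ of the quotient modules directly to an isomorphism $M \dsum \ltwo(U) \to N \dsum \ltwo(U)$ restricting to $\phi$. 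Your proposal replaces that theorem by a neighbourhood extension plus a scaled rotation, and both steps have genuine gaps. The extension is not ``routine Tietze-type'': $v \in \PIPD(F)$ is not a norm-continuous partial isometry (its rank varies with $\rank p|_F$; only the underlying element $s$ with $s = v|s|$ is continuous), so a \emph{continuous} partial isometry $v'$ with $(v')^*v' = p$ cannot even exist unless $\rank p$ is locally constant; extending polar-decomposition data with prescribed source is precisely the content of Theorem \ref{DimFinite-Embedding}, which requires $\dim X < \infty$ --- a hypothesis this corollary deliberately omits (per the remark following it, the statement is aimed at infinite-dimensional $X$ as well).

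The decisive failure is in the interpolation. For $\phi_{\tilde v}\colon M_p \to M_{q'}$ to be an \emph{isomorphism} you need $\tilde v \tilde v^* = q'$ at every point of $U$; the condition $\tilde v \tilde v^* \leq q'$ gives only an embedding. Your $\tilde v$ is built norm-continuously on $U$ from $v'$ by rotations scaled by a scalar function vanishing on $\partial U$. Along any norm-continuous path of partial isometries with source $p$ and range below $q'$, the complementary projection $q' - \tilde v \tilde v^*$ varies norm-continuously, hence stays in a single unitary equivalence class; so it cannot be $0$ deep inside $U$ (full range, as surjectivity demands) while being infinite near $\partial U$, where the dying rotation forces $\tilde v \tilde v^*$ to be norm-close to $v'(v')^*$, in general a subprojection of $q'$ of infinite corank (e.g.\ contained in $q \dsum 0$ inside $q \dsum 1_U$). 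Concretely: projections at norm-distance less than $1$ are unitarily equivalent, so nothing norm-close to $v'$ can have range $q'$. The true lift $\tilde v$ produced by the stabilization theorem is necessarily norm-\emph{discontinuous} on $U$ (this is permitted for elements of $\PIPD(X)$), and manufacturing it requires the module-level swindle over countably many generators with summable norm control --- as in \cite[Theorem 2]{CiupercaRobertSantiago}, or in the same spirit as the proof of Theorem \ref{OpenAbsorption} --- not a fibrewise rotation cut off by a continuous function. So you have correctly located the crux, but the proposed mechanism for resolving it cannot work as stated.
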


\begin{proof}
By Theorem \ref{OpenAbsorption}, we have $M = M' \dsum L$ where $L \iso \ltwo(U)$.
It follows that the isomorphism
\[ M = M' \dsum L \iso M' \dsum L \dsum \ltwo(U) = M \dsum \ltwo(U) \]
fixes $M|_{X \setminus U}$.

By \cite[Theorem 2]{CiupercaRobertSantiago}, we have an isomorphism
\[ \psi'\colon M \dsum \ltwo(U) \to N \dsum \ltwo(U), \]
such that $\psi'|_{X \setminus U} = \phi$.
Combining this with isomorphisms which fix $M|_{X \setminus U}$ and $N|_{X \setminus U}$ gives
\[ \psi\colon M \to M \dsum \ltwo(U) \to N \dsum \ltwo(U) \to N \]
such that $\psi|_{X \setminus U} = \phi$.
\end{proof}
 
\begin{remark}
By \cite[Th\'eor\`eme 5]{DixmierDouady}, whenever $X$ has finite dimension, the condition $M|_U \cong \ltwo(U)$ is the same as $\dim M|_{U} = \infty$.
However, by \cite[Corollaire 1 after Th\'eor\`eme 6]{DixmierDouady}, this is not the case when $X$ has infinite dimension.
This last corollary confirms \cite[Conjecture 1]{Dupre:infinite} in the case that $A$ there is closed.
It also generalizes \cite[Proposition 12]{Dupre:ClassII} in two ways: first, it drops the restriction that $\dim M$ has finite range; second, it is the best possible generalization to the situation that $X$ is not finite dimensional (there, we must require that $M|_U \iso \ltwo(U)$ and not simply that $\dim M|_U = \infty$).
\end{remark}

\end{document}